\documentclass[12pt]{article}
\usepackage[left=2cm,right=2cm, top=2.4cm,bottom=2.4cm,bindingoffset=0cm]{geometry}

\usepackage{tikz, titlesec, tikz-cd}
\usetikzlibrary{calc,intersections,through,backgrounds}
\usepackage{amsmath, amssymb}
\usepackage{amsthm, amsbsy}

\usepackage{amsthm}
\let\oldproofname=\proofname
\renewcommand{\proofname}{\rm\bf{\oldproofname}}
\usepackage{hyperref}

\usepackage[numbers, sort&compress]{natbib}

\usepackage{caption}
\usepackage{subcaption}

\usepackage{yhmath}
\usepackage{mathdots}

\usepackage[width=0.75\textwidth]{caption}

\usetikzlibrary{positioning, bending, arrows.meta}
\usetikzlibrary{decorations.text}
\usetikzlibrary{decorations.pathmorphing}
\usetikzlibrary{decorations.markings}

  \tikzset{->-/.style={decoration={
  markings,
  mark=at position .5 with {\arrow{>}}},postaction={decorate}}}
    \tikzset{-<-/.style={decoration={
  markings,
  mark=at position .5 with {\arrow{<}}},postaction={decorate}}}

    \tikzset{-->-/.style={decoration={
  markings,
  mark=at position .8 with {\arrow{>}}},postaction={decorate}}}
  
  \newcommand*\Bell{\ensuremath{\boldsymbol\ell}}

\newtheorem{lemma}{Lemma}[section]
\newtheorem{proposition}[lemma]{Proposition}
\newtheorem{theorem}[lemma]{Theorem}
\newtheorem{corollary}[lemma]{Corollary}

\theoremstyle{definition}
\newtheorem{remark}[lemma]{Remark}
\newtheorem{definition}[lemma]{Definition}

\newtheorem{example}[lemma]{Example}

\newtheorem{problem}[lemma]{Problem}
\newtheorem{conjecture}[lemma]{Conjecture}

\numberwithin{equation}{section} 
\makeatletter
\let\c@equation\c@lemma
\makeatother

  \newtheorem*{mthm*}{Master Theorem {\rm \cite[Theorem 2.6]{fomin2023incidences}}}

\newcommand{\C}{\mathbb{C}}
\newcommand{\D}{{\mathrm D}}
\newcommand{\R}{{\mathrm R}}
\newcommand{\F}{\mathrm{F}}

\renewcommand{\P}{\mathbb{P}}

\title{
Surface topology and incidence theorems over division rings}
\author{Anton Izosimov\thanks{
School of Mathematics and Statistics,
University of Glasgow;
e-mail: {\tt anton.izosimov@glasgow.ac.uk}
}}
\date{}

\tikzset{->-/.style={decoration={
  markings,
  mark=at position .7 with {\arrow{>}}},postaction={decorate}}}
  
  \tikzset{->>-/.style={decoration={
  markings,
  mark=at position .5 with {\arrow{>}}},postaction={decorate}}}
  
  \tikzset{-<-/.style={decoration={
  markings,
  mark=at position .3 with {\arrow{<}}},postaction={decorate}}}
  
\usetikzlibrary{angles, quotes}

\begin{document}

\maketitle

\abstract{
Incidence theorems concern configurations of points, lines, and, more generally, higher-dimensional subspaces in projective space. Broadly speaking, such theorems fall into two classes: those that hold over an arbitrary division ring, such as Desargues’ theorem, and those that hold only over fields, such as Pappus’ theorem. In this paper, we explain the topological origin of this distinction. To this end, we extend to the noncommutative setting the surface–graph approach to incidence theorems developed by Richter-Gebert, Fomin, and Pylyavskyy. We then show that theorems associated with graphs embedded on the sphere, such as Desargues’ theorem, hold over any division ring, whereas theorems corresponding to graphs embedded on surfaces of positive genus, such as Pappus’ theorem, typically hold if and only if the ground ring is a field. We also extend these results to the setting of arbitrary rings, not necessarily admitting division. 

}

\tableofcontents

\section{Introduction}
  \begin{figure}[t]
\centering
  \centering
\includegraphics[scale = 0.45]{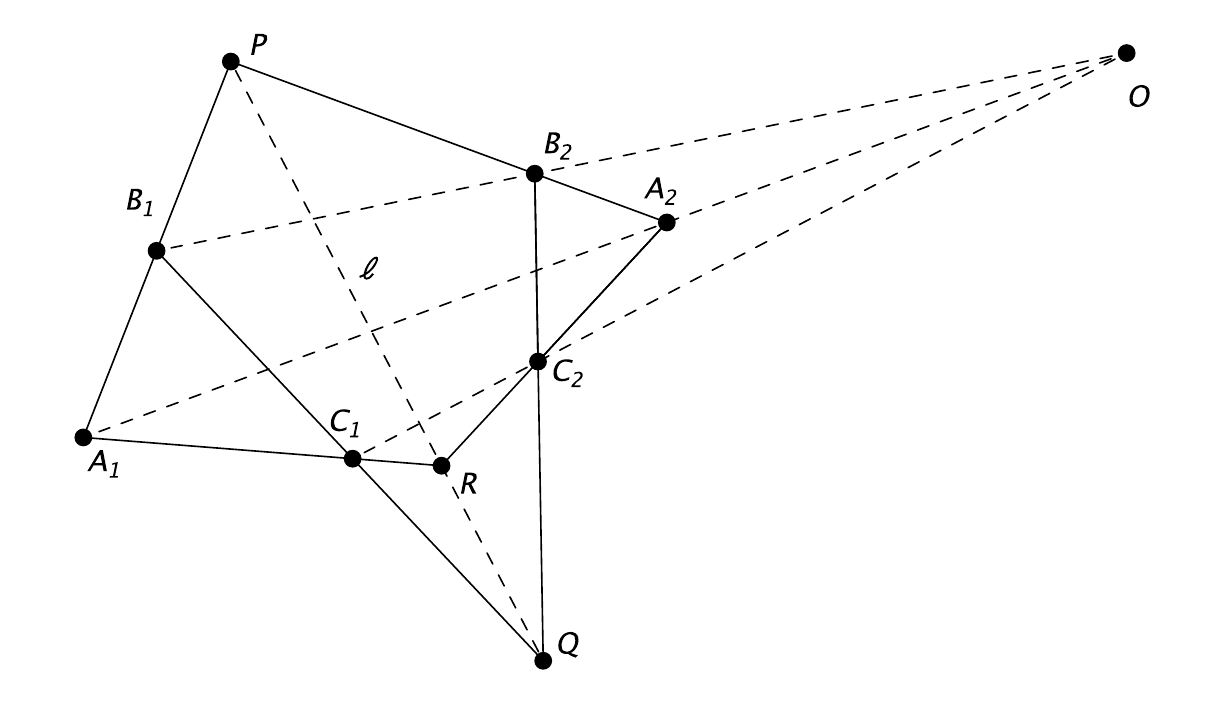}

\caption{Desargues' theorem: if two triangles $A_1B_1C_1$,  $A_2B_2C_2$ are perspective from a point (i.e., the lines $A_1A_2$, $B_1B_2$, $C_1C_2$ are concurrent), then the intersection points $A_1B_1 \cap A_2B_2$, $B_1C_1 \cap B_2C_2$, $A_1C_1 \cap A_2C_2$ of their corresponding sides are collinear. 
}\label{fig:dsrg}
\end{figure}
Incidence theorems concern configurations of points, lines, and, more generally, higher-dimensional subspaces in projective space.  The two most fundamental examples are the theorems of Desargues and Pappus, shown in Figures~\ref{fig:dsrg} and~\ref{fig:pappus}, respectively. Desargues’ theorem holds over any division ring, whereas Pappus’ theorem holds only over fields. In this paper, we explain the topological origin of this distinction.

  \begin{figure}[t!]
\centering
  \centering
\includegraphics[scale = 0.35]{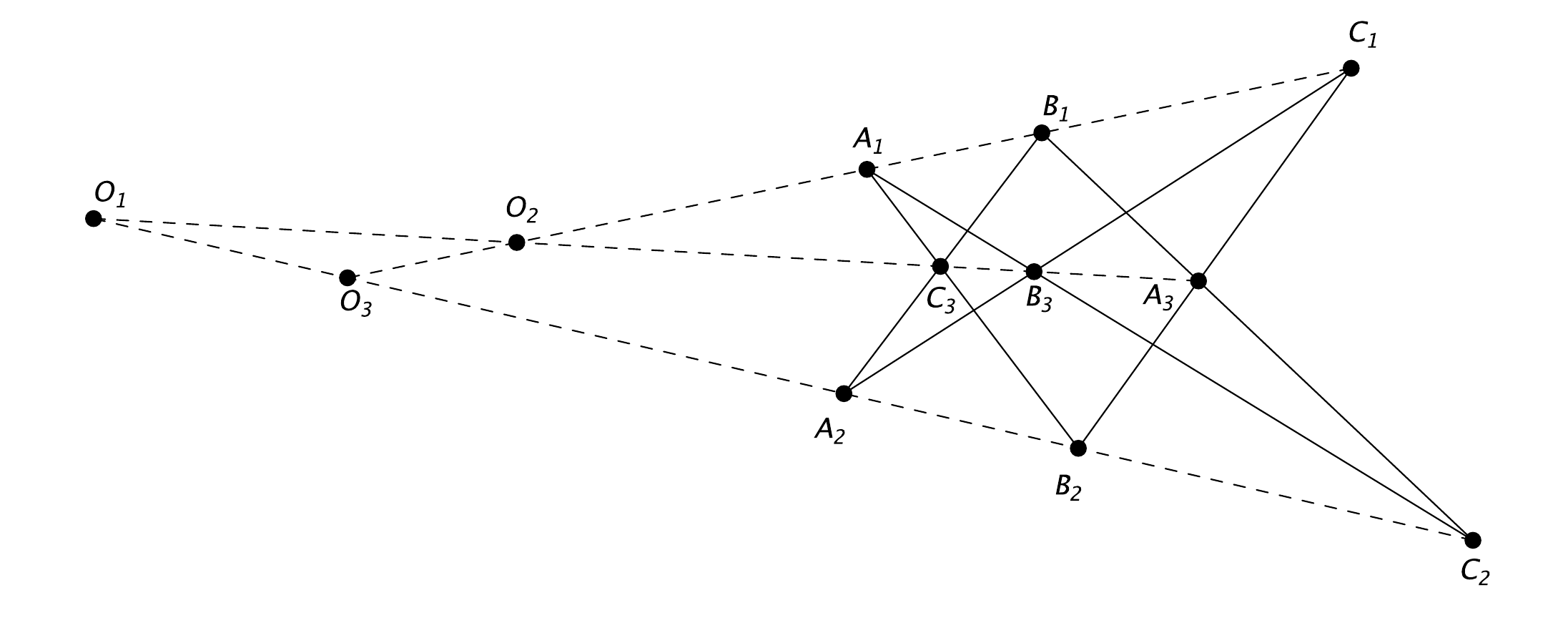}


\caption{Pappus' theorem: if both triples $A_1, B_1, C_1$, $A_2, B_2, C_2$ of non-adjacent vertices of a hexagon $A_1B_2C_1A_2B_1C_2$ are collinear, then so are the intersection points $A_1B_2 \cap A_2B_1$, $A_1C_2 \cap A_2C_1$, $B_1C_2 \cap B_2C_1$ of its opposite sides. 
}\label{fig:pappus}
\end{figure}



A topological approach to incidence theorems was developed in 
\cite{richter2006meditations, fomin2023incidences, glynn2013rabbit}. 
The main idea is to assign points or hyperplanes in a projective space to vertices 
or edges of a graph embedded in an orientable surface. For each face, one imposes 
a geometric condition involving the points or hyperplanes associated with vertices 
or edges incident to that face. While the precise condition depends on the construction, 
it can always be expressed as requiring that the product of certain invariants around 
the face equals~$1$. Moreover, when the product is computed for adjacent faces, the 
terms corresponding to their common edge cancel. Consequently, if the condition holds 
on all but one face, it must also hold on the remaining face. For each specific graph, this translates into an 
incidence theorem.

In the present paper, we generalize this construction to projective spaces over 
possibly noncommutative division rings. In this setting, the product of invariants 
around a face depends on the order of multiplication, so cancellations are not guaranteed. 
We show that the validity of the corresponding incidence theorem is governed by the topology 
of the underlying surface: theorems associated with graphs embedded on the sphere, such as 
Desargues' theorem, remain valid in the noncommutative setting, whereas theorems associated 
with graphs embedded on surfaces of higher genus, such as Pappus' theorem, \emph{typically} 
hold only over fields. The precise statement depends on the type of construction. Here, we 
consider two classes of theorems. The first class consists of theorems associated with 
\emph{polygonal subdivisions} of surfaces, generalizing Richter-Gebert's construction based 
on triangulations \cite{richter2006meditations}, and in this setting we show that all 
noncommutative theorems on surfaces of positive genus are either vacuous or fail. The second 
class consists of theorems associated with \emph{quadrilateral tilings}, as defined by Fomin 
and Pylyavskyy \cite{fomin2023incidences}, and in this setting we show that all 
noncommutative theorems on positive-genus surfaces fail in projective spaces of sufficiently 
large dimension.

We also consider two variations on the main theme. The first concerns an extension of the Fomin–Pylyavskyy construction, in which the product of invariants around a face need not be one but instead lies in a normal subgroup of the multiplicative group of the ring. The second concerns arbitrary rings, which may not be division rings. We show that our results extend to both of these settings.

\medskip

{\bf Acknowledgments.} The author is grateful to Max Planck Institute for Mathematics in
Bonn for its hospitality and financial support. This work was partially supported by the Simons Foundation through its Travel Support for Mathematicians program. 
The author also thanks Sergey Fomin, David Glynn, Pavlo Pylyavskyy, and Mikhail Skopenkov 
for valuable discussions.

\section{Incidence theorems from polygonal subdivisions}\label{sec:ps}

In this section, building upon Richter-Gebert's approach to planar incidence theorems via triangulations \cite{richter2006meditations}, we associate an incidence theorem in a projective space with any subdivision of a closed, connected, orientable surface into polygons. We then determine which of these theorems remain valid over noncommutative division rings. First, we recall the original result of \cite{richter2006meditations}.

\begin{theorem}\label{RGT}
Consider a triangulation of a closed, connected, orientable surface. Assign a point in the projective plane to each vertex and edge of the triangulation so that the following conditions hold:
\begin{enumerate}
    \item If an edge $e$ joins vertices $v$ and $w$, then the points assigned to $e$, $v$, and $w$ are collinear and distinct. 
    \item The three points assigned to the vertices of each triangle are not collinear.
    \item For all triangles except one, the points assigned to the three edges of the triangle are collinear.
\end{enumerate}
Then the points assigned to the three edges of the remaining triangle are also collinear.
\end{theorem}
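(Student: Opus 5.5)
This is Richter-Gebert's theorem over a field, and the plan is to prove it with Menelaus' theorem and a cancellation argument. Work over a field $\F$. For an edge $e$ joining $v$ and $w$, the points $P_v, P_w, P_e$ are collinear and distinct. Choose homogeneous representatives $\hat P_v$ for all vertices. Since $P_e$ lies on the line $P_vP_w$ and is distinct from both, we can write $\hat P_e = \hat P_v + \lambda\hat P_w$ with $\lambda\neq 0,\infty$. Define the ratio
\[
r(v,e,w) := \lambda \in \F^*,
\]
which is attached to the oriented edge from $v$ to $w$. Swapping the roles of $v$ and $w$ replaces $\lambda$ by $\lambda^{-1}$, so $r(w,e,v) = r(v,e,w)^{-1}$. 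Rescaling $\hat P_v$ or $\hat P_w$ multiplies $r$ by a factor $c_w/c_v$; these factors will cancel in every product around a closed cycle.

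The key step is Menelaus' theorem in this form. Let $T$ be a triangle with vertices $u,v,w$ whose points are non-collinear by condition 2, and with edges $a=uv$, $b=vw$, $c=wu$. Then the edge points are collinear if and only if
\[
r(u,a,v)\,r(v,b,w)\,r(w,c,u) = -1.
\]
To check this, take $\hat P_u,\hat P_v,\hat P_w$ as a basis. The edge points are then $\hat P_u+\alpha\hat P_v$, $\hat P_v+\beta\hat P_w$, and $\hat P_w+\gamma\hat P_u$, and their determinant in this basis is $1+\alpha\beta\gamma$.

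Now orient the surface, and orient each triangle compatibly, so that every edge is traversed in opposite directions by its two adjacent triangles. Multiply the Menelaus products over all triangles. Each edge contributes $r\cdot r^{-1}=1$, so the total product is $1$. The vertex rescaling factors also cancel, but this is not even needed, because we fix one choice of representatives throughout. Orientability is exactly what guarantees that the two traversals of each edge are opposite.

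There are $F$ triangles in total. For $F-1$ of them the product equals $-1$, so the remaining product is $(-1)^{-(F-1)}$. The parity bookkeeping is the main obstacle. For a triangulation, $3F=2E$, so $F$ is even and the remaining product is $(-1)^{F-1}=-1$. By Menelaus, the edge points of the last triangle are therefore collinear.

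The only delicate points are the following:
\begin{itemize}
\item the sign convention in Menelaus, which we fix by the determinant computation above;
\item the evenness of $F$;
\item the use of commutativity of $\F$ when rearranging the global product so that the edge terms cancel.
\end{itemize}
Commutativity is exactly what fails over division rings, which is the theme of the paper.
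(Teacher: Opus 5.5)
Your proof is correct and is essentially the paper's argument: Proposition~\ref{prop:Men}, the connection built in Section~\ref{proofSec}, and Corollary~\ref{abCor} likewise turn Menelaus into a condition on the product of edge ratios around each face, and then use commutativity together with the opposite traversal of each edge by its two adjacent faces to cancel everything. The only cosmetic differences are these. The paper builds the sign into the connection, $\phi(e)=-\alpha^{-1}\beta$, so that Menelaus reads ``holonomy $=1$'' on every face and no parity count such as your $3F=2E$ is needed. It also phrases the edge cancellation homologically (the boundary of the removed face is null-homologous) rather than as a direct product.
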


This yields an infinite collection of incidence theorems, one for each triangulation.
These theorems hold over any field.

\begin{example}[Tetrahedron and Desargues, see {\cite[Section~3.2]{richter2006meditations} and \cite[Example~8.4]{fomin2023incidences}}]\label{ex:dfromt}
Consider the triangulation of the sphere with the combinatorial structure of a tetrahedron. Assign points to its vertices and edges as shown in Figure~\ref{fig:destr}, and suppose this assignment satisfies all conditions of Theorem~\ref{RGT}, with the last condition holding for all triangles except, possibly, $A_1B_1C_1$. Unpacking these conditions, we find that the lines $A_1A_2$, $B_1B_2$, and $C_1C_2$ meet at the point $O$, reproducing the configuration assumed in Desargues' theorem. Moreover, we have
$P = A_1B_1 \cap A_2B_2$, $ Q = B_1C_1 \cap B_2C_2$,  $R = A_1C_1 \cap A_2C_2,$
so that the conclusion of Theorem~\ref{RGT} -- that $P$, $Q$, and $R$ are collinear -- coincides exactly with the conclusion of Desargues' theorem. In this way, Desargues theorem is a special case of Theorem~\ref{RGT} corresponding to a tetrahedron. (More precisely, we get Desargues' theorem subject to certain general-position assumptions; for instance, the common intersection point of $A_1A_2$, $B_1B_2$, and $C_1C_2$ cannot coincide with any of the points $A_i$, $B_i$, or $C_i$, a restriction absent from Desargues' original formulation.)

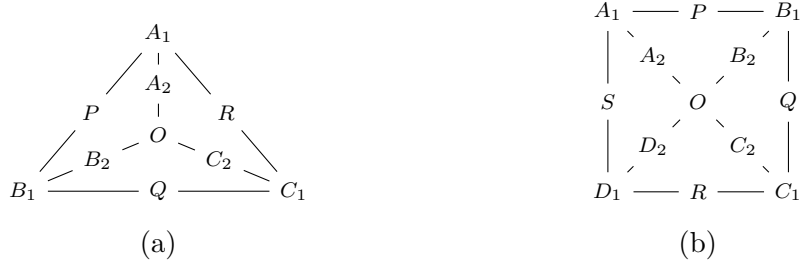
\begin{figure}[t]
\centering

\begin{subfigure}{0.4\textwidth}
\centering
\begin{tikzpicture}[scale=0.3]

\node (A1) at (0,7) {\scriptsize$ A_1$};
\node (O)  at (0,2.5) {\scriptsize$O$};
\node (B1) at (-6,0) {\scriptsize$B_1$};
\node (C1) at (6,0) {\scriptsize$C_1$};

\node (A2) at ($(O)!0.5!(A1)$) {\scriptsize$A_2$};
\node (B2) at ($(O)!0.45!(B1)$) {\scriptsize$B_2$};
\node (C2) at ($(O)!0.45!(C1)$) {\scriptsize$C_2$};

\node (ABint) at ($(A1)!0.5!(B1)$) {\scriptsize$P$};
\node (ACint) at ($(A1)!0.5!(C1)$) {\scriptsize$R$};
\node (BCint) at (0,0) {\scriptsize$Q$};

\draw (A1) -- (A2);
\draw (A1) -- (ABint);
\draw (A1) -- (ACint);
\draw (A2) -- (O);
\draw (O) -- (B2);
\draw (O) -- (C2);
\draw (ABint) -- (B1);
\draw (ACint) -- (C1);
\draw (B2) -- (B1);
\draw (C2) -- (C1);
\draw (B1) -- (BCint);
\draw (BCint) -- (C1);

\end{tikzpicture}
\caption{}
\label{fig:destr}
\end{subfigure}
\begin{subfigure}{0.4\textwidth}
\centering
\begin{tikzpicture}[scale = 0.6]
\node (A1) at (-2,2) {\scriptsize$A_1$};
\node (O)  at (0,0) {\scriptsize$O$};
\node (B1) at (2,2) {\scriptsize$B_1$};
\node (C1) at (2,-2) {\scriptsize$C_1$};
\node (D1) at (-2,-2) {\scriptsize$D_1$};

\node (A2) at ($(O)!0.5!(A1)$) {\scriptsize$A_2$};
\node (B2) at ($(O)!0.5!(B1)$) {\scriptsize$B_2$};
\node (C2) at ($(O)!0.5!(C1)$) {\scriptsize$C_2$};
\node (D2) at ($(O)!0.5!(D1)$) {\scriptsize$D_2$};

\node (ABint) at ($(A1)!0.5!(B1)$) {\scriptsize$P$};
\node (BCint) at ($(B1)!0.5!(C1)$) {\scriptsize$Q$};
\node (CDint) at ($(C1)!0.5!(D1)$) {\scriptsize$R$};
\node (DAint) at ($(D1)!0.5!(A1)$) {\scriptsize$S$};

\draw (A1) -- (A2);
\draw (A1) -- (ABint);
\draw (A1) -- (DAint);

\draw (A2) -- (O);

\draw (O) -- (B2);
\draw (O) -- (C2);

\draw (ABint) -- (B1);
\draw (DAint) -- (D1);
\draw (B2) -- (B1);
\draw (C2) -- (C1);

\draw (B1) -- (BCint);
\draw (BCint) -- (C1);

\draw (O) -- (D2) -- (D1);
\draw (C1) -- (CDint) -- (D1);

\end{tikzpicture}
\caption{
}\label{fig:destrg}
\end{subfigure}

\caption{Polygonal subdivisions of the sphere giving rise to incidence theorems: (a) the tetrahedron and Desargues' theorem; (b) the square pyramid and a generalized Desargues' theorem.}
\label{fig:desargues}

\end{figure}

\end{example}

\begin{example}[Pappus from a triangulation of the torus, see {\cite[Section 3.4]{richter2006meditations} and  \cite[Example~8.5]{fomin2023incidences}}]\label{pappus}
Consider the triangulation of the torus shown in Figure~\ref{fig:pappus-torus}, and assign points in $\mathbb{P}^2$, labeled as in the figure, to its vertices and edges. (More precisely, this is a \emph{$\Delta$-triangulation} rather than a triangulation, since in a triangulation two triangles can intersect only along a vertex or an edge.) Suppose that this assignment satisfies the conditions of Theorem~\ref{RGT}. From the first condition, we obtain that the points $A_i, B_i, C_i$ lie on the line $O_jO_k$ for any permutation $(i,j,k)\in S_3$, in agreement with the notation of Figure~\ref{fig:pappus}. The third condition, imposed on each individual triangle, implies that the points $A_i, B_j, C_k$ are collinear, again for $(i,j,k)\in S_3$. Theorem~\ref{RGT} asserts that one of these collinearities follows from the others, which is precisely the statement of Pappus’ theorem. (As in the previous example, this derivation only gives Pappus in the general-position case, since the assumptions of Theorem~\ref{RGT} are somewhat more restrictive than those of the classical theorem.)

\begin{figure}[t]
\centering

\begin{subfigure}{0.4\textwidth}
\centering
\begin{tikzpicture}[xscale=0.5, yscale=0.75]

\node at (5,5) (n11) {\scriptsize$O_2$};
\node at (8,4) (n24) {\scriptsize$O_1$};
\node at (8,2) (n44) {\scriptsize$O_2$};
\node at (5,1) (n51) {\scriptsize$O_1$};
\node at (2,2) (n41) {\scriptsize$O_2$};
\node at (2,4) (n21) {\scriptsize$O_1$};

\node at (5,3) (n32) {\scriptsize$O_3$};

\node at ($(n32)!0.5!(n11)$) (A1) {\scriptsize$A_1$};
\node at ($(n32)!0.5!(n24)$) (B2) {\scriptsize$B_2$};
\node at ($(n32)!0.5!(n44)$) (C1) {\scriptsize$C_1$};
\node at ($(n32)!0.5!(n51)$) (A2) {\scriptsize$A_2$};
\node at ($(n32)!0.5!(n41)$) (B1) {\scriptsize$B_1$};
\node at ($(n32)!0.5!(n21)$) (C2) {\scriptsize$C_2$};

\node at ($(n11)!0.5!(n21)$) (B3) {\scriptsize$B_3$};
\node at ($(n21)!0.5!(n41)$) (A3) {\scriptsize$A_3$};
\node at ($(n41)!0.5!(n51)$) (C3) {\scriptsize$C_3$};
\node at ($(n51)!0.5!(n44)$) (B3b) {\scriptsize$B_3$};
\node at ($(n44)!0.5!(n24)$) (A3b) {\scriptsize$A_3$};
\node at ($(n24)!0.5!(n11)$) (C3b) {\scriptsize$C_3$};

\draw (n32)--(A1)--(n11);
\draw (n32)--(B2)--(n24);
\draw (n32)--(C1)--(n44);
\draw (n32)--(A2)--(n51);
\draw (n32)--(B1)--(n41);
\draw (n32)--(C2)--(n21);

\draw (n11)--(B3)--(n21);
\draw (n21)--(A3)--(n41);
\draw (n41)--(C3)--(n51);
\draw (n51)--(B3b)--(n44);
\draw (n44)--(A3b)--(n24);
\draw (n24)--(C3b)--(n11);

\end{tikzpicture}
\caption{}
\label{fig:pappus-torus}
\end{subfigure}
\begin{subfigure}{0.4\textwidth}
\centering
\begin{tikzpicture}[scale=2.7]

\coordinate (V00) at (0,0);
\coordinate (V05) at (0,0.5);
\coordinate (V01) at (0,1);
\coordinate (V50) at (0.5,0);
\coordinate (V55) at (0.5,0.5);
\coordinate (V51) at (0.5,1);
\coordinate (V10) at (1,0);
\coordinate (V15) at (1,0.5);
\coordinate (V11) at (1,1);

\def\fracgap{0.35} 

\foreach \y/\xstart/\xend/\label in {
  1/0/0.5/A_2, 1/0.5/1/A_4,
  0.5/0/0.5/B_4, 0.5/0.5/1/B_2,
  0/0/0.5/A_2, 0/0.5/1/A_4
} {
    \pgfmathsetmacro{\xdiff}{\xend-\xstart}
    \pgfmathsetmacro{\xmid}{(\xstart+\xend)/2}
    \pgfmathsetmacro{\xleft}{\xmid-\xdiff*\fracgap/2}
    \pgfmathsetmacro{\xright}{\xmid+\xdiff*\fracgap/2}
    \draw (\xstart,\y) -- (\xleft,\y);
    \node at (\xmid,\y) {\scriptsize$\label$};
    \draw (\xright,\y) -- (\xend,\y);
}

\foreach \x/\ylow/\yhigh/\label in {
  0.5/0.5/1/A_3, 1/0.5/1/A_1, 1/0/0.5/B_3,
  0/0.5/1/A_1, 0.5/0/0.5/B_1, 0/0/0.5/B_3
} {
    \pgfmathsetmacro{\ydiff}{\yhigh-\ylow}
    \pgfmathsetmacro{\ymid}{(\ylow+\yhigh)/2}
    \pgfmathsetmacro{\ylowseg}{\ymid-\ydiff*\fracgap/2}
    \pgfmathsetmacro{\yhighseg}{\ymid+\ydiff*\fracgap/2}
    \draw (\x,\ylow) -- (\x,\ylowseg);
    \node at (\x,\ymid) {\scriptsize$\label$};
    \draw (\x,\yhighseg) -- (\x,\yhigh);
}

\foreach \p in {V00,V05,V01,V50,V55,V51,V10,V15,V11} {
  \fill (\p) circle (0.02);
}
\end{tikzpicture}
\caption{}
\label{fig:mobius-dandelin-gallucci}
\end{subfigure}

\caption{Polygonal subdivisions of the torus giving rise to incidence theorems: (a) Pappus' theorem; (b) M\"obius' theorem. In both panels, opposite sides of the displayed domain are identified.}
\label{fig:desargues-pappus}

\end{figure}
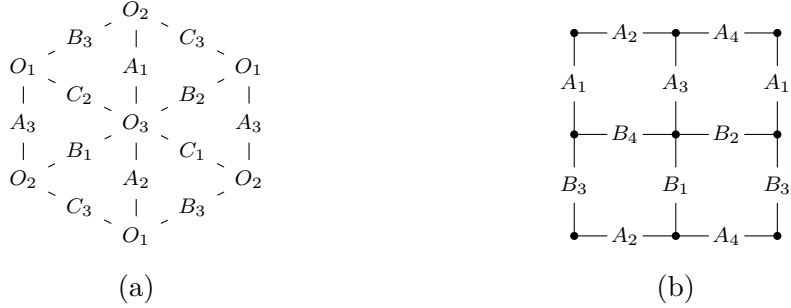


\end{example}

\begin{remark}
In examples considered throughout the paper, we will often say that for a given triangulation (or, more generally, a graph embedded on a surface), Theorem~\ref{RGT} (or one of its generalizations discussed below) specializes to a certain classical theorem. Such a specialization requires identifying the data of the classical theorem with the data assigned to the vertices and edges of the triangulation. These identifications typically rely on additional general position assumptions. Further arguments may be necessary to obtain the classical theorem in full generality. For example, see \cite[Example 2.13]{pylyavskyy2025incidences} for a derivation of Pappus's theorem from the triangulation shown in Figure~\ref{fig:pappus-torus}, which does not depend on any extra general position hypotheses.\end{remark}

Our aim is to determine which instances of Theorem~\ref{RGT} remain valid in the noncommutative setting. We address this question within the broader framework of incidence theorems associated with \emph{polygonal subdivisions}, allowing also for arbitrary dimension of the ambient projective space.

{
\begin{definition}
A \emph{polygonal subdivision} of a closed, connected, orientable surface $\Sigma$ is a cellular decomposition of $\Sigma$  in which the closure of every $2$-cell is an embedded polygon with at least two vertices.
\end{definition}
}

In what follows, $\P^n(\D)$ denotes the \emph{left} projective space of dimension $n$ over a division ring $\mathbb{D}$. All of our results hold verbatim for right projective spaces as well, which can be seen by viewing them as left projective spaces over the opposite ring.

\begin{definition}
A \emph{realization} of a polygonal subdivision   is an assignment of a point   in $\P^n(\D)$   to each vertex and edge of the subdivision such that the following conditions hold:
\begin{enumerate}
    \item  If an edge $e$ joins vertices $v$ and $w$, then the points assigned to $e$, $v$, and $w$ are collinear and distinct. 
    \item Points assigned to the vertices of each face are linearly independent.
\end{enumerate}
\end{definition}

\begin{definition}
Given a realization of a polygonal subdivision, say that the \emph{Menelaus condition} holds for a given face of the subdivision if points assigned to the edges of that face are linearly dependent.
\end{definition}

\begin{definition}\label{RGT2}
The \emph{incidence theorem  in $\P^n(\D)$  associated with a polygonal subdivision $\mathcal P$} is the following statement: for any realization of $\mathcal P$ in $\P^n(\D)$, if the Menelaus condition holds for all faces except one, then it also holds for the remaining face.
%
%
\end{definition}

Our main result in this framework is the following:

\begin{theorem}\label{mainThmPS}
Let $\mathcal P$ be a polygonal subdivision of a closed, connected, orientable surface~\(\Sigma\). Then the incidence theorem corresponding to $\mathcal P$ holds in $\P^n(\D)$ if and only if one of the following conditions is satisfied:
\begin{enumerate}
\item $\mathcal P$ has a face with more than \(n+1\) vertices;
\item \(\D\) is a field;
\item \(\Sigma\) is a sphere.
\end{enumerate}

\end{theorem}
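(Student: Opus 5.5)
The approach is to attach to each face a noncommutative Menelaus-type invariant, and to show that the product of these invariants over all faces is forced to be trivial only up to commutators. For each vertex $v$ and edge $e$ we fix lifts $\tilde v,\tilde e\in\D^{n+1}$. Condition 1 of a realization says that if $e$ joins $v$ and $w$, then $\tilde e=\alpha\tilde v+\beta\tilde w$ with $\alpha,\beta\in\D^*$; both coefficients are nonzero because the three points are distinct.

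\emph{Face invariant.} Take a face $F$ with vertices $v_1,\dots,v_k$ in cyclic order and edges $e_i=v_iv_{i+1}$. Since $\tilde v_1,\dots,\tilde v_k$ are linearly independent, any relation $\sum\lambda_i\tilde e_i=0$ expands into $k$ scalar conditions in the basis $\tilde v_j$. These conditions admit a nonzero solution exactly when the cyclic product
\[
\mu_F=\alpha_1^{-1}\beta_1\,\alpha_2^{-1}\beta_2\cdots\alpha_k^{-1}\beta_k
\]
(up to signs and exact placement of the factors) equals $\pm1$. This gives a left/right-ordered Menelaus criterion. Rescaling the lifts conjugates the factors, so the condition $\mu_F=1$ is well defined and independent of the starting vertex. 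The coefficients of an edge seen from its two adjacent faces are inverse to each other, because the orientation of $\Sigma$ makes the two faces traverse the edge in opposite directions.

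\emph{Sufficiency.}
\begin{itemize}
\item Case 1: if a face has more than $n+1$ vertices, no realization exists, since more than $n+1$ vectors cannot be independent in $\D^{n+1}$. The theorem then holds vacuously.
\item Case 2: if $\D$ is a field, the $\mu_F$ commute. Their product over all faces equals $1$ by edgewise cancellation, which is Richter-Gebert's argument. Hence $\mu_F=1$ for all faces but one forces the last invariant to be $1$.
\item Case 3: for the sphere I would argue that the ordered product of the face invariants, read along a spanning-tree/cut ordering of the faces, telescopes to $1$ in $\D^*$. The dual graph of a sphere subdivision gives a simply connected cell structure, so each face's ordered word can be conjugated into place. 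Concretely, the relations $\mu_F=1$ for the remaining faces make a ``holonomy'' map from paths to $\D^*$ well defined around each face. Simple connectivity then implies that the boundary loop of the last face, which is contractible in the sphere minus that face (a disk), has trivial holonomy.
\end{itemize}
The clean formalization of Case 3 is to define a $\D^*$-valued connection on the dual graph or the medial graph. In it, each edge carries the transition matrix between the vertex bases of adjacent faces, and $\mu_F$ is the holonomy around $F$. Flatness on all but one face, plus the van Kampen relation in $\pi_1$ of the sphere, gives flatness on the last face.

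\emph{Necessity.} Suppose the genus is positive, $\D$ is noncommutative, and every face has at most $n+1$ vertices. Then we build a counterexample. In genus $g\ge1$, $\pi_1(\Sigma)$ has the relation $\prod[a_i,b_i]=1$, and removing a face leaves a surface with free fundamental group. We therefore choose the connection to be flat on all faces but one, with prescribed holonomies $x,y\in\D^*$ along a pair of dual cycles $a_1,b_1$ where $xy\ne yx$. The holonomy around the last face is then the commutator $xyx^{-1}y^{-1}\ne1$.

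It remains to realize such coefficient data geometrically. We place the vertices of each face at independent points, which is possible since each face has at most $n+1$ vertices, and define the edge points by the chosen $\alpha,\beta$. The difficulty is global consistency: a vertex lies on many faces, and the vertices of each face must remain independent while the lifts are shared. I would handle this by choosing vertex vectors generically in $\D^{n+1}$. Any face's $\le n+1$ vertices are then independent, because over an infinite division ring the generic set avoids finitely many proper ``dependency'' loci. Noncommutative $\D$ is infinite by Wedderburn's theorem. Edge points are determined by the coefficients, and distinctness holds because $\alpha,\beta\neq0$. The coefficients themselves are chosen freely on a spanning tree plus generators, which is possible because the constraints $\mu_F=1$ on all faces but one are solvable with arbitrary holonomy on the free group.

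\emph{Main obstacle.} The hardest step is the exact derivation of the noncommutative Menelaus criterion: one must get the correct ordering and conjugation so that gluing across edges really yields a well-defined holonomy. After that, both the sphere argument and the commutator counterexample are formal.
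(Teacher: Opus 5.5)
Your proposal follows the paper's proof: the edge coefficients form a $\D^\times$-connection on the 1-skeleton of $\mathcal P$ whose face holonomies encode the Menelaus conditions, the sphere and field cases follow from the disk (van Kampen) and abelian-cancellation arguments, and the counterexample is a commutator holonomy realized by vertex vectors any $n+1$ of which are independent (using Wedderburn and the fact that finitely many proper submodules cannot cover $\D^{n+1}$). The one correction is that this connection must be $\phi(e)=-\alpha_e^{-1}\beta_e$ on the primal 1-skeleton, not on the dual or medial graph, whose faces correspond to vertices of $\mathcal P$ rather than to faces. The sign and ordering you flag as the main obstacle are already settled by your own computation: the relations $\lambda_j\alpha_j+\lambda_{j-1}\beta_{j-1}=0$ close up to $\prod_i(-\beta_i\alpha_{i+1}^{-1})=1$, which after conjugation by $\alpha_1$ is exactly $\prod_i(-\alpha_i^{-1}\beta_i)=1$.
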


\begin{remark}\label{algMen}
If a subdivision $\mathcal P$ has a face $F$ with more than $n+1$ vertices, then the corresponding incidence theorem in $\P^n(\D)$ is trivial, since no assignment of linearly independent points to the vertices of $F$ is possible and hence $\mathcal P$ admits no realizations. To avoid this degeneracy, one may drop the linear-independence requirement and replace the Menelaus condition with its algebraic analogue, which asserts that for an $m$-gonal face the multiratio of the points assigned to the vertices and edges of the face is equal to $(-1)^{m+1}$; see equation~\eqref{eqstar} below.  With this modification, the incidence theorem corresponding to $\mathcal P$ holds in $\P^n(\D)$ if and only if $\mathbb{D}$ is a field or the underlying surface is a sphere; see Remark~\ref{algMen2}.

\end{remark}

Theorem \ref{mainThmPS} will be established in Section~\ref{proofSec} as a consequence of general results on {connections on surface graphs} discussed in Section~\ref{secCon}. We focus on the \emph{only if} part: for any noncommutative division ring $\D$ and any polygonal subdivision $\mathcal P$ of a surface of positive genus whose faces have at most $n+1$ vertices, there exists a realization in $\P^n(\D)$ where the Menelaus condition holds on all faces except one. Moreover, we show that this exceptional face can be chosen arbitrarily. 

The \emph{if} part of Theorem~\ref{mainThmPS} can be found in the existing literature. It was first stated in~\cite{glynn2016cosmology} without proof; see Theorem~2. A proof was later given in~\cite{GlynnPreprint}. The planar case was also independently proved in~\cite{pylyavskyy2025incidences}. Here we give an independent proof based on graph connections.

%
%


\begin{example}
Desargues’ theorem (Example~\ref{ex:dfromt}) corresponds to a subdivision of the sphere and therefore holds over any division ring. Pappus' theorem (Example \ref{pappus}) corresponds to a subdivision of the torus and thus holds over a division ring $\D$ if and only if $\D$ is a field. 
\end{example}


%
%
%

{
\begin{example}[Generalized Desargues from a square pyramid]\label{ex:sp}
Consider a polygonal subdivision of the sphere with the combinatorial structure of a square pyramid. Assign points to its vertices and edges, labelling them as in Figure~\ref{fig:destrg}. Assume that this gives a realization of the subdivision, and that the Menelaus condition is satisfied for all faces except, possibly, the base of the pyramid. These conditions are equivalent to requiring that the lines \(A_1A_2\), \(B_1B_2\), \(C_1C_2\), and \(D_1D_2\) meet at the point \(O\), while the points \(P,Q,R,S\) are given by \(P = A_1B_1 \cap A_2B_2\), \(Q = B_1C_1 \cap B_2C_2\), \(R = C_1D_1 \cap C_2D_2\), and \(S = A_1D_1 \cap A_2D_2\). The conclusion of the corresponding incidence theorem is that these four points are coplanar, yielding the following generalization of Desargues’ theorem: if two (non-planar) spatial quadrilaterals \(A_1B_1C_1D_1\) and \(A_2B_2C_2D_2\) are perspective from a point -- that is, if the lines joining corresponding vertices meet at a single point -- then the intersection points of the corresponding sides lie in a common plane. A version of this result for \((n+1)\)-gons in \({\P}^n\) appears in~\cite{bell1955generalized}. The corresponding subdivision is a pyramid with an \((n+1)\)-gonal base. By Theorem \ref{mainThmPS}, this theorem holds over any division ring.
\end{example}
}
\begin{example}[Generalized Pappus from a genus $g$ surface]
In Figure~\ref{fig:pappus-torus}, replace the hexagon split into six triangles with a polygon of $4g + 2$ sides, split into $4g + 2$ triangles, and glue the opposite sides to produce a surface of genus $g$. The corresponding incidence theorem is as follows. Suppose we are given a polygon with $4g + 2$ sides such that the vertices with even indices are collinear, and the vertices with odd indices are also collinear. If $2g$ intersection points of opposite sides lie on a line, then the last intersection point of opposite sides also lies on that line. A version of this theorem, where the vertices of the polygon lie on an arbitrary conic, is due to M\"obius \cite{mobius1848verallgemeinerung}, and the version presented here can be found in \cite{fritsch2016remarks}. By Theorem~\ref{mainThmPS}, this theorem holds over a division ring $\D$ if and only if $\D$ is a field.
\end{example}

\begin{figure}[t]
\centering
\hfill
\begin{subfigure}{0.45\textwidth}
\centering
\includegraphics[scale=0.42]{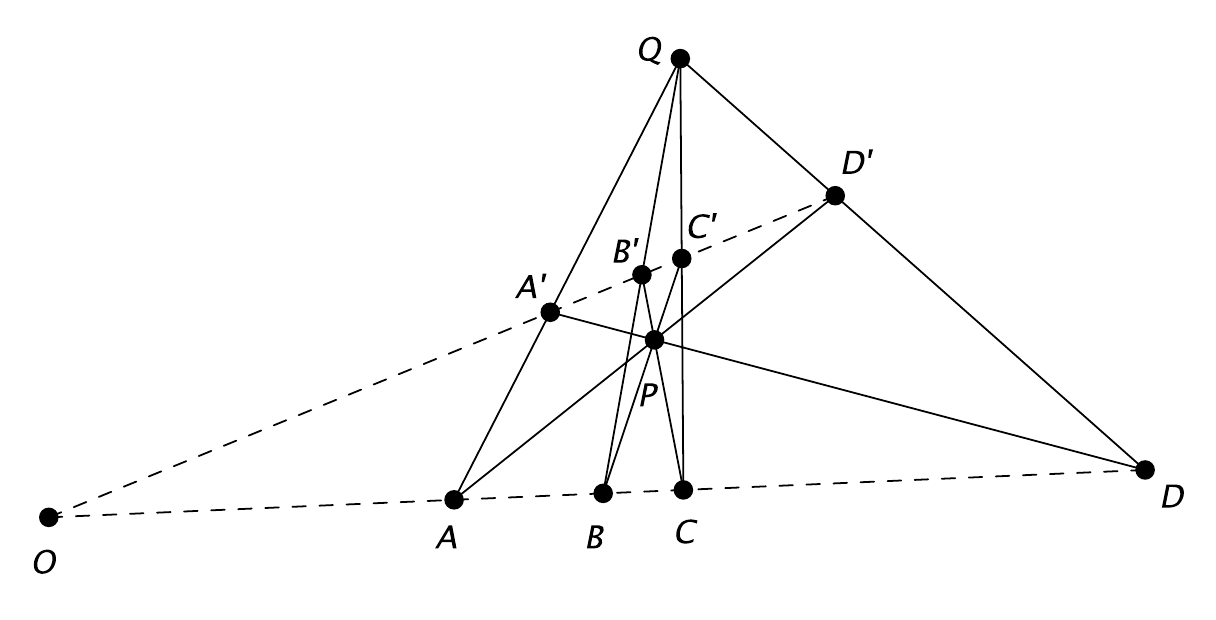}
\\ \, \\
\caption{}\label{fig:perm}
\end{subfigure}
\begin{subfigure}{0.4\textwidth}
\centering
\begin{tikzpicture}[scale=0.55]
\node (A1) at (-2,2) {\scriptsize$B$};
\node (O)  at (0,0) {\scriptsize$O$};
\node (B1) at (2,2) {\scriptsize$B'$};
\node (C1) at (2,-2) {\scriptsize$C$};
\node (D1) at (-2,-2) {\scriptsize$C'$};
\node (A2) at ($(O)!0.5!(A1)$) {\scriptsize$A$};
\node (B2) at ($(O)!0.5!(B1)$) {\scriptsize$A'$};
\node (C2) at ($(O)!0.5!(C1)$) {\scriptsize$D$};
\node (D2) at ($(O)!0.5!(D1)$) {\scriptsize$D'$};
\node (ABint) at ($(A1)!0.5!(B1)$) {\scriptsize$P$};
\node (BCint) at ($(B1)!0.5!(C1)$) {\scriptsize$Q$};
\node (CDint) at ($(C1)!0.5!(D1)$) {\scriptsize$P$};
\node (DAint) at ($(D1)!0.5!(A1)$) {\scriptsize$Q$};
\draw (A1) -- (A2);
\draw (A1) -- (ABint);
\draw (A1) -- (DAint);
\draw (A2) -- (O);
\draw (O) -- (B2);
\draw (O) -- (C2);
\draw (ABint) -- (B1);
\draw (DAint) -- (D1);
\draw (B2) -- (B1);
\draw (C2) -- (C1);
\draw (B1) -- (BCint);
\draw (BCint) -- (C1);
\draw (O) -- (D2) -- (D1);
\draw (C1) -- (CDint) -- (D1);
\end{tikzpicture}

\vspace{0.5em}

\begin{tikzpicture}[scale=0.55]
\node (A1) at (-2,2) {\scriptsize$B$};
\node (O)  at (0,0) {\scriptsize$O$};
\node (B1) at (2,2) {\scriptsize$B'$};
\node (C1) at (2,-2) {\scriptsize$C$};
\node (D1) at (-2,-2) {\scriptsize$C'$};
\node (A2) at ($(O)!0.5!(A1)$) {\scriptsize$C$};
\node (B2) at ($(O)!0.5!(B1)$) {\scriptsize$C'$};
\node (C2) at ($(O)!0.5!(C1)$) {\scriptsize$B$};
\node (D2) at ($(O)!0.5!(D1)$) {\scriptsize$B'$};
\node (ABint) at ($(A1)!0.5!(B1)$) {\scriptsize$P$};
\node (BCint) at ($(B1)!0.5!(C1)$) {\scriptsize$Q$};
\node (CDint) at ($(C1)!0.5!(D1)$) {\scriptsize$P$};
\node (DAint) at ($(D1)!0.5!(A1)$) {\scriptsize$Q$};
\draw (A1) -- (A2);
\draw (A1) -- (ABint);
\draw (A1) -- (DAint);
\draw (A2) -- (O);
\draw (O) -- (B2);
\draw (O) -- (C2);
\draw (ABint) -- (B1);
\draw (DAint) -- (D1);
\draw (B2) -- (B1);
\draw (C2) -- (C1);
\draw (B1) -- (BCint);
\draw (BCint) -- (C1);
\draw (O) -- (D2) -- (D1);
\draw (C1) -- (CDint) -- (D1);
\end{tikzpicture}
\caption{}\label{fig:permsub}
\end{subfigure}

\caption{(a) The permutation theorem: let $A,B,C,D$ and $A',B',C',D'$ be two collinear quadruples such that the lines $AA', BB', CC', DD'$ are concurrent. If three of the four lines $AD', BC', B'C, A'D$ are concurrent, then all four are concurrent.  (b) The corresponding triangulation: the square faces of the two pyramids are glued together to form an octahedron.}
\label{fig:perm-octa-vertical}
\end{figure}\begin{example}[The octahedron and the permutation theorem]\label{permoct}


The theorem depicted in Figure~\ref{fig:perm} (called the \emph{permutation theorem}  in~\cite{fomin2023incidences}) can be obtained from a triangulation of the sphere with the combinatorial structure of an octahedron. To this end, assign points to the vertices and edges of the octahedron as in Figure~\ref{fig:permsub}. This yields a realization precisely when the quadruples $\{A,B,C,D\}$ and $\{A',B',C',D'\}$ are collinear, with the corresponding lines meeting at the point $O$, while points $P$ and $Q$ are given by $P = BB' \cap CC'$, $Q = BC' \cap B'C$. Among the associated Menelaus conditions, four coincide with these realization conditions, while the remaining four assert that $P \in AA'$, $P \in BB'$, $Q \in AD'$, and $Q \in A'D$; assuming that three of these incidence relations hold, the conclusion that the fourth also holds is exactly the permutation theorem. Since the corresponding subdivision lies on a sphere, the permutation theorem holds over any division ring. (Note that the permutation theorem is not the theorem directly associated with the octahedron but rather a consequence of it, as the construction imposes additional constraints on the realization; for example, the point $P$ is assigned to two distinct edges).

\end{example}

\begin{example}[M\"obius-Dandelin-Gallucci from a quadrangulation of the torus, see {\cite[Example 8.11]{fomin2023incidences}}]\label{MDG}
The \emph{M\"obius configuration} consists of two tetrahedra inscribed in each other, that is, eight points 
$A_1, A_2, A_3, A_4, B_1, B_2, B_3, B_4 \in {\P}^3$ such that for any $(i,j,k,l) \in S_4$ we have 
$A_i \in B_jB_kB_l$ and $B_i \in A_jA_kA_l$. The \emph{M\"obius theorem}    \cite{mobius1828kann} states that any one of these 
eight incidence relations follows from the remaining seven. To encode this theorem, consider a 
quadrangulation of the torus into four quadrilaterals and assign points to its edges as shown in 
Figure~\ref{fig:mobius-dandelin-gallucci}. In order for this assignment to extend to a realization 
(i.e., to assign points to vertices so that the realization conditions hold), the quadruples 
$\{A_1, A_2, B_3, A_4\}$, $\{B_1, B_2, A_3, B_4\}$, $\{A_1, B_2, B_3, B_4\}$, and 
$\{B_1, A_2, A_3, A_4\}$ must be coplanar. Additionally, Menelaus conditions give four remaining 
coplanarity assumptions of the form $A_i \in B_jB_kB_l$ and $B_i \in A_jA_kA_l$. Thus, the 
corresponding incidence theorem is precisely the M\"obius theorem.

The M\"obius theorem can also be reformulated as what is called the \emph{sixteen points}, 
or \emph{Dandelin-Gallucci} theorem. That theorem states that if we have two quadruples of lines 
$\ell_1, \dots, \ell_4$ and $m_1, \dots, m_4$ in ${\P}^3$, such that fifteen of the sixteen 
pairs $(\ell_i, m_j)$ are coplanar, then the sixteenth pair is also coplanar. To pass from the M\"obius 
formulation to the sixteen points formulation, consider the quadruples 
$\ell_1 = A_1A_3, \ell_2 = A_2A_4, \ell_3 = B_1B_3, \ell_4 = B_2B_4$ and 
$m_1 = A_1B_3, m_2 = A_2B_4, m_3 = B_1A_3, m_4 = B_2A_4$. Then eight of the pairs $(\ell_i, m_j)$ are 
coplanar by construction, while each of the remaining eight coplanarity conditions is equivalent to a 
condition of the form $A_i \in B_jB_kB_l$ or $B_i \in A_jA_kA_l$. Thus, the sixteen points theorem 
is equivalent to the M\"obius theorem. Since the corresponding subdivision is on a torus, both theorems holds over a division ring $\D$ if and only if $\D$ is a field, cf. \cite{al1956class, horvath2019theorem}.

\end{example}

\begin{remark}
Theorem \ref{mainThmPS} and its proof given below remain valid if we replace the Menelaus condition with the \emph{Ceva condition}. For a face with vertices labeled $A_1, \dots, A_m$ and edges labeled $B_1, \dots, B_m$, where $B_i \in A_iA_{i+1}$, the Ceva condition states that the $m$ hyperplanes of the form
\[
A_1 \dots A_{i-1} B_i A_{i+2} \dots A_m
\]
are concurrent. Replacing the Menelaus condition with Ceva's condition amounts to replacing the requirement that the multiratio around the face be equal to $(-1)^{m+1}$ with the requirement that the multiratio be equal to~$1$. Note that for faces with an even number of vertices, the two conditions coincide. 

\end{remark}

{\color{blue}

}


\section{Connections on surface graphs}\label{secCon}
To prove Theorem \ref{mainThmPS}, we use graph connections. Later in the paper, we apply the same technique to prove analogues of Theorem \ref{mainThmPS} in the setting of incidence theorems defined by tilings; see Section \ref{sec:tile}.

Let $\Gamma$ be a graph. Denote by $V(\Gamma)$ the set of vertices of $\Gamma$, and by $E_0(\Gamma)$ the set of oriented edges of $\Gamma$. Each element of $E_0(\Gamma)$ is an edge of $\Gamma$ equipped with an orientation.  For any $e \in E_0(\Gamma)$, let $\bar e$ denote the same edge with the opposite orientation.  
Let also $h, t \colon E_0(\Gamma) \to V(\Gamma)$ be functions assigning to each oriented edge its {head} and {tail}, respectively.

\begin{definition} Let $G$ be a group, and $\Gamma$ be a graph. A \emph{$G$-connection} on $\Gamma$ is a function $\phi \colon E_0(\Gamma) \to G$ such that $\phi(\bar e) = \phi(e)^{-1}$.\end{definition}

\begin{definition} 
Two $G$-connections $\phi, \tilde \phi$ on a graph $\Gamma$ are \emph{gauge-equivalent} if there is a function $\psi \colon V(\Gamma) \to G$ such that for each oriented edge $e  \in E_0(G)$ we have $$\tilde \phi(e) = \psi(t(e)) \phi(e) \psi(h(e))^{-1}.$$
\end{definition}

\begin{definition}
Let $v_0 \in V(G)$ be a vertex of a graph $\Gamma$. An ordered sequence $(e_1, \dots, e_n)$ of oriented edges $e_i \in E_0(\Gamma)$ is called a \emph{loop based at} $v_0$ if 
$
h(e_i) = t(e_{i+1})$ for $i = 1, \dots, n-1,$ {and} $h(e_n) = t(e_1) = v_0.
$
\end{definition}
\begin{definition}
The \emph{holonomy} of a $G$-connection $\phi$ along a loop $\gamma = (e_1, \dots, e_n)$ based at $v_0 \in V(\Gamma)$  is \[
\mathrm{Hol}_\phi(\gamma) := \phi(e_1) \phi(e_2) \cdots \phi(e_n) \in G.
\]
\end{definition}

Clearly, the holonomy only depends on the homotopy class of a loop, yielding a well-defined group homomorphism $\mathrm{Hol}_\phi \colon \pi_1(\Gamma, v_0) \to G$. Conversely, any group homomorphism $\rho \colon \pi_1(\Gamma, v_0) \to G$ arises as the holonomy of some $G$-connection: given $\rho$, one can construct a $G$-connection on the edges of $\Gamma$ by assigning group elements arbitrarily along a spanning tree and then extending to the remaining edges so that the holonomy around each loop agrees with $\rho$. This defines a surjective map from the set of $G$-connections on $\Gamma$ to $\mathrm{Hom}(\pi_1(\Gamma), G)$. This map takes gauge-equivalent connections to conjugate homomorphisms, and conversely, conjugacy of homomorphisms implies gauge equivalence of the corresponding connections. As a result, we obtain the following:

\begin{proposition}[cf. {\cite[Section~2.4]{cimasoni2023graph} or \cite[Theorem~1]{bourque2023flat}}]\label{flatconnchar}
For any graph $\Gamma$ and group $G$, the map $\phi \mapsto \mathrm{Hol}_\phi$ induces a bijection between 
$G$-connections on $\Gamma$ modulo gauge equivalence and the quotient
$
\mathrm{Hom}(\pi_1(\Gamma), G)/G,
$
where $G$ acts on $\mathrm{Hom}(\pi_1(\Gamma), G)$ by conjugation.
\end{proposition}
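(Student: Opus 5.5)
We have to prove Proposition~\ref{flatconnchar}: the map $\phi \mapsto \mathrm{Hol}_\phi$ induces a bijection from $G$-connections modulo gauge equivalence to $\mathrm{Hom}(\pi_1(\Gamma),G)/G$. Here $\pi_1(\Gamma)=\pi_1(\Gamma,v_0)$ for a fixed base vertex $v_0$; I would assume $\Gamma$ is connected, as the paper implicitly does.

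\textbf{Well-definedness.} First, $\mathrm{Hol}_\phi$ is a homomorphism on $\pi_1(\Gamma,v_0)$. Holonomy is multiplicative under concatenation of loops. It is invariant under inserting or deleting a backtrack $(e,\bar e)$, since $\phi(e)\phi(\bar e)=1$. Homotopy classes of edge loops in a graph are exactly the equivalence classes under such moves.

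Next, suppose $\tilde\phi(e)=\psi(t(e))\phi(e)\psi(h(e))^{-1}$. Along a loop $(e_1,\dots,e_n)$ based at $v_0$ the inner factors telescope, because $h(e_i)=t(e_{i+1})$. Hence
\[
\mathrm{Hol}_{\tilde\phi}(\gamma)=\psi(v_0)\,\mathrm{Hol}_\phi(\gamma)\,\psi(v_0)^{-1}.
\]
So gauge-equivalent connections give conjugate homomorphisms, and the induced map on quotients is well defined.

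\textbf{Surjectivity.} Fix a spanning tree $T$ and, for each vertex $v$, let $p_v$ be the unique reduced path in $T$ from $v_0$ to $v$. The standard fact is that $\pi_1(\Gamma,v_0)$ is free on the loops $\gamma_e=p_{t(e)}\,e\,\overline{p_{h(e)}}$, one for each non-tree edge $e$ with a chosen orientation. Given $\rho$, set $\phi=1$ on tree edges and $\phi(e)=\rho(\gamma_e)$ on each chosen non-tree edge, with $\phi(\bar e)=\rho(\gamma_e)^{-1}$. Then $\mathrm{Hol}_\phi(\gamma_e)=\rho(\gamma_e)$ on every generator, so $\mathrm{Hol}_\phi=\rho$.

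\textbf{Injectivity.} This is the main step. Suppose $\mathrm{Hol}_{\tilde\phi}=g\,\mathrm{Hol}_\phi\,g^{-1}$. Define the parallel transport $P_\phi(v)=\mathrm{Hol}_\phi(p_v)$, the ordered product of $\phi$ along $p_v$. Put $\psi(v)=P_\phi(v)^{-1}$; this gauge takes $\phi$ to a connection $\phi'$ that equals $1$ on every tree edge.

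To verify this, let $e$ be a tree edge. Then either $p_{h(e)}=p_{t(e)}e$ or $p_{t(e)}=p_{h(e)}\bar e$, so $P_\phi(h(e))=P_\phi(t(e))\phi(e)$. Therefore
\[
\phi'(e)=P_\phi(t(e))^{-1}\phi(e)P_\phi(h(e))=1.
\]
For a non-tree edge $e$, the same telescoping gives $\phi'(e)=\mathrm{Hol}_\phi(\gamma_e)$. Also $\psi(v_0)=1$, so the holonomy at $v_0$ is unchanged.

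Thus every connection is gauge equivalent to a tree-normalized connection determined entirely by its holonomy on the free generators $\gamma_e$. Normalize $\tilde\phi$ the same way to get $\tilde\phi'$. Then apply the constant gauge $\psi\equiv g$ to $\phi'$: tree edges still carry $g\cdot 1\cdot g^{-1}=1$, and each non-tree edge carries $g\,\mathrm{Hol}_\phi(\gamma_e)\,g^{-1}=\mathrm{Hol}_{\tilde\phi}(\gamma_e)$. Hence $\phi'$ is gauge equivalent to $\tilde\phi'$, which gives injectivity.

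The only inputs beyond these routine verifications are the description of $\pi_1$ of a graph as free on the non-tree edges, and the fact that gauge transformations compose, so gauge equivalence is an equivalence relation.
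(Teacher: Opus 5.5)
Your route is the same as the paper's: homotopy invariance of holonomy, realizing any $\rho$ by a connection that is trivial on a spanning tree, and matching gauge equivalence with conjugacy. Your tree-normalization argument is the natural way to fill in the step the paper only asserts, namely that conjugate holonomies force gauge equivalence.

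However, the normalizing gauge in your injectivity step is inverted. The displayed identity $\phi'(e)=P_\phi(t(e))^{-1}\phi(e)P_\phi(h(e))=1$ is false. Take the tree edge $e$ from $v_0$ to a neighbour $v$. Then $P_\phi(v_0)=1$ and $P_\phi(v)=\phi(e)$, so your formula gives $\phi'(e)=\phi(e)^2$, which is not $1$ even for Abelian $G$. Similarly, for a non-tree edge your expression $P_\phi(t(e))^{-1}\phi(e)P_\phi(h(e))$ is not $\mathrm{Hol}_\phi(\gamma_e)=P_\phi(t(e))\,\phi(e)\,P_\phi(h(e))^{-1}$.

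The fix is to take $\psi(v)=P_\phi(v)$ rather than its inverse. With the paper's convention $\tilde\phi(e)=\psi(t(e))\phi(e)\psi(h(e))^{-1}$, this gives $\phi'(e)=P_\phi(t(e))\,\phi(e)\,P_\phi(h(e))^{-1}$. This expression:
\begin{itemize}
\item equals $1$ on tree edges, since $P_\phi(h(e))=P_\phi(t(e))\phi(e)$, as you correctly observed;
\item equals $\mathrm{Hol}_\phi(\gamma_e)$ on non-tree edges;
\item still satisfies $\psi(v_0)=1$.
\end{itemize}
After this correction the rest of your argument goes through unchanged: the constant gauge $g$ carries $\phi'$ to $\tilde\phi'$, and gauge equivalence is transitive. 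So this is a computational slip, not a missing idea.
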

This can be viewed as a graph-theoretic version of the well-known bijection between the \emph{moduli space of flat $G$-connections} and the \emph{character variety} $\mathrm{Hom}(\pi_1(\Sigma), G) / G$ for a surface~$\Sigma$. 
 

 
\begin{definition} A graph $\Gamma$ embedded in a surface $\Sigma$  is called a \emph{map} if its \emph{faces} (i.e., connected components of the complement $\Sigma \setminus \Gamma$) are homeomorphic to disks.  If $\Sigma$ has a boundary, we additionally require that every boundary component is a union of edges of $\Gamma$. \end{definition}

\begin{definition} Let $\Gamma$ be a {map} on a surface. A $G$-connection on $\Gamma$ is called \emph{flat} if its holonomies around all faces of $\Gamma$ are trivial (i.e., equal to $1 \in G$). \end{definition}
\begin{remark}
The definition of the holonomy around a face requires a choice of one of its vertices as a basepoint, as well as a choice of orientation. 
However, under a change of basepoint, the corresponding holonomy remains in the same conjugacy class, while reversing the orientation replaces the holonomy with its reciprocal. 
Therefore, the condition that the holonomy is trivial is well-defined.
\end{remark}

\begin{proposition}\label{prop:vk}
Let $\Gamma$ be a map on a surface $\Sigma$. Then, for any group $G$, 
a $G$-connection on $\Gamma$ is \emph{flat} 
if and only if the corresponding homomorphism 
$
\pi_1(\Gamma) \to G
$
factors through the natural homomorphism 
$
\pi_1(\Gamma) \to \pi_1(\Sigma).
$
\end{proposition}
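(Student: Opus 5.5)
The plan is to use the standard presentation of $\pi_1(\Sigma)$ obtained from the cell structure that $\Gamma$ induces on $\Sigma$. Since $\Gamma$ is a map, $\Sigma$ is a $2$-dimensional CW complex: its $1$-skeleton is $\Gamma$, and each face is a $2$-cell attached along the closed walk that traces its boundary. We may assume $\Gamma$ is connected, which holds because $\Sigma$ is connected and the faces are disks.

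\textbf{Step 1 (van Kampen).} Fix a basepoint $v_0$. For each face $f$, choose a vertex $v_f$ on its boundary and let $\gamma_f$ be the boundary loop of $f$ based at $v_f$. Choose also a path $\delta_f$ in $\Gamma$ from $v_0$ to $v_f$. Attaching $2$-cells kills exactly the normal closure of their attaching loops. Hence the inclusion $\Gamma \hookrightarrow \Sigma$ induces a surjection
$$\pi_1(\Gamma,v_0) \to \pi_1(\Sigma,v_0)$$
whose kernel $N$ is the normal subgroup generated by the classes $[\delta_f \gamma_f \delta_f^{-1}]$. The boundary case needs no extra work: boundary components are unions of edges, so they lie in $\Gamma$ and add no further $2$-cells. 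Taking the subgroup generated by the conjugates $[\delta_f\gamma_f\delta_f^{-1}]$ is the one place needing care, and it is handled by the next step.

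\textbf{Step 2 (comparing holonomies).} Holonomy is multiplicative under concatenation of paths, so
$$\mathrm{Hol}_\phi(\delta_f\gamma_f\delta_f^{-1}) = g\,\mathrm{Hol}_\phi(\gamma_f)\,g^{-1},$$
where $g = \phi(\delta_f)$ is the product of $\phi$ along $\delta_f$. This is a conjugate of the face holonomy, so it is trivial if and only if the face holonomy is trivial. By the remark above, this triviality does not depend on the choice of $v_f$ or of orientation.

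\textbf{Step 3 (conclusion).} A homomorphism $\rho = \mathrm{Hol}_\phi\colon \pi_1(\Gamma)\to G$ factors through $\pi_1(\Gamma)/N \cong \pi_1(\Sigma)$ if and only if $N \subseteq \ker\rho$. Because $\ker \rho$ is normal, this holds if and only if $\ker\rho$ contains the generators $[\delta_f\gamma_f\delta_f^{-1}]$. By Step 2, that is equivalent to every face holonomy being trivial, i.e., to $\phi$ being flat.

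The only substantive input is the CW/van Kampen description of $\pi_1$ of a $2$-complex. Everything else is bookkeeping about basepoints and conjugation.
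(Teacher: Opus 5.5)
Your proof is correct and follows essentially the same route as the paper. Both reduce the claim to the van Kampen fact (Hatcher, Prop.~1.26) that the kernel of $\pi_1(\Gamma)\to\pi_1(\Sigma)$ is the normal closure of the face boundary loops; you simply spell out the basepoint and conjugation bookkeeping that the paper leaves implicit.
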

\begin{proof}
This is equivalent to the statement that the kernel of the natural homomorphism 
$
\pi_1(\Gamma) \to \pi_1(\Sigma)
$
is generated by the boundaries of all faces of $\Gamma$. 
The latter is a well-known consequence of Van Kampen's theorem, see e.g. \cite[Proposition 1.26]{hatcher2005algebraic}.
\end{proof}
Combining this with Proposition \ref{flatconnchar}, we get the following:
\begin{proposition}\label{flatconnchar2}
Let $\Gamma$ be a map on a surface $\Sigma$. Then, for any group $G$, the map $\phi \mapsto \mathrm{Hol}_\phi$ induces a bijection between 
flat $G$-connections on $\Gamma$ modulo gauge equivalence and the character variety
$
\mathrm{Hom}(\pi_1(\Sigma), G)/G
$
of $\Sigma$.
\end{proposition}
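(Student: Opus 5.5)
The plan is to compose the two preceding propositions. The map $\phi \mapsto \mathrm{Hol}_\phi$ sends flat connections to homomorphisms $\pi_1(\Gamma)\to G$ that factor through $\pi_1(\Gamma)\to\pi_1(\Sigma)$, so it takes values in the set of such homomorphisms modulo conjugation. I would then identify that set with $\mathrm{Hom}(\pi_1(\Sigma),G)/G$.

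First, we may assume $\Sigma$ is connected: since every face is a disk, the components of $\Gamma$ correspond to those of $\Sigma$. Fix a basepoint $v_0 \in V(\Gamma)$, which is a point of $\Sigma$. Then look at the natural homomorphism $\iota\colon \pi_1(\Gamma,v_0)\to\pi_1(\Sigma,v_0)$. Its surjectivity is standard for maps. Because the faces are open disks, $\Sigma$ is obtained from $\Gamma$ (together with the boundary edges, which already lie in $\Gamma$) by attaching $2$-cells. Attaching $2$-cells induces a surjection on $\pi_1$; this is the same part of \cite[Proposition 1.26]{hatcher2005algebraic} already used for Proposition~\ref{prop:vk}, which in fact says $\iota$ is surjective with kernel normally generated by the face boundaries. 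Precomposition with $\iota$ therefore gives an injection
\[
\iota^*\colon \mathrm{Hom}(\pi_1(\Sigma),G)\hookrightarrow \mathrm{Hom}(\pi_1(\Gamma),G).
\]
By Proposition~\ref{prop:vk}, the image of $\iota^*$ is exactly the set of holonomy homomorphisms of flat connections. Moreover, $\iota^*$ is $G$-equivariant for conjugation, so it descends to a map on conjugacy classes.

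Next, I check that this descended map is injective on conjugacy classes. Suppose $\iota^*\rho_1 = g\,(\iota^*\rho_2)\,g^{-1}$. Then $\rho_1\circ\iota = (g\rho_2g^{-1})\circ\iota$, and surjectivity of $\iota$ gives $\rho_1 = g\rho_2 g^{-1}$. Hence $\mathrm{Hom}(\pi_1(\Sigma),G)/G$ is in bijection with the subset of $\mathrm{Hom}(\pi_1(\Gamma),G)/G$ consisting of classes of homomorphisms that factor through $\iota$. This subset is well defined, since factoring through $\iota$ is a conjugation-invariant condition.

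Finally, apply Proposition~\ref{flatconnchar}. It gives a bijection between all connections modulo gauge and $\mathrm{Hom}(\pi_1(\Gamma),G)/G$. Flatness is gauge invariant: face holonomies change by conjugation, which is the same fact as in the remark preceding Proposition~\ref{prop:vk}. So this bijection restricts to a bijection between flat connections modulo gauge and the classes that factor through $\iota$, again by Proposition~\ref{prop:vk}. Composing with the identification from the previous step proves the statement.

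The only step that is not pure bookkeeping is the surjectivity of $\iota$. This is where the hypothesis that faces are disks, and that boundary components consist of edges, enters. I would cite the cell-attachment statement rather than reprove it.
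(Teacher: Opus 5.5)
Your proposal is correct and follows the same route as the paper, which obtains this proposition by combining Proposition~\ref{flatconnchar} with Proposition~\ref{prop:vk}. You additionally spell out two points the paper leaves implicit: the surjectivity of $\pi_1(\Gamma)\to\pi_1(\Sigma)$ (from the same cell-attachment fact cited for Proposition~\ref{prop:vk}), which makes the identification with $\mathrm{Hom}(\pi_1(\Sigma),G)/G$ injective, and the gauge invariance of flatness.
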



\begin{corollary}\label{sphereCor}
Let $\Gamma$ be a map on the sphere $S^2$, and let $\phi$ be a $G$-connection on $\Gamma$. 
If the holonomy of $\phi$ around all but one face of $\Gamma$ is trivial, 
then the holonomy around the remaining face is also trivial 
(i.e., $\phi$ is flat).
\end{corollary}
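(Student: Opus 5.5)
The idea is to reduce the statement to the van Kampen description of the kernel of $\pi_1(\Gamma)\to\pi_1(\Sigma)$, as in the proof of Proposition~\ref{prop:vk}, and then show that the boundary of one face is redundant among the face relations when $\Sigma=S^2$.

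First, fix a basepoint $v_0$ and, for each face $F$, a loop $\gamma_F$ based at $v_0$. This loop goes along a path $p_F$ from $v_0$ to a vertex of $F$, traverses $\partial F$ once in the orientation induced from $S^2$, and returns along $p_F$ reversed. Trivial holonomy around $F$ is equivalent to $\mathrm{Hol}_\phi(\gamma_F)=1$, because changing basepoint or path only conjugates the holonomy, as in the remark on holonomy around faces. So the hypothesis says $\mathrm{Hol}_\phi(\gamma_F)=1$ for every face except one, say $F_0$, and we need $\mathrm{Hol}_\phi(\gamma_{F_0})=1$.

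The key topological step is this. Remove a small open disk from the interior of $F_0$. The result $\Sigma'$ is a closed disk, and $\Gamma\subset\Sigma'$ is a deformation retract of $\Sigma'$ minus the open faces $F\neq F_0$ together with the collar of $F_0$. Applying van Kampen (the same fact from \cite[Proposition 1.26]{hatcher2005algebraic} used in Proposition~\ref{prop:vk}) to $\Sigma'$, built from $\Gamma\cup(\text{collar})\simeq\Gamma$ by attaching the 2-cells $F\neq F_0$, gives $\pi_1(\Sigma')$ as $\pi_1(\Gamma)$ modulo the normal closure $N$ of $\{\gamma_F : F\neq F_0\}$. Since $\Sigma'$ is a disk, $\pi_1(\Sigma')=1$, so $N=\pi_1(\Gamma,v_0)$. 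In particular $\gamma_{F_0}\in N$.

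Now $\mathrm{Hol}_\phi\colon\pi_1(\Gamma,v_0)\to G$ is a homomorphism. It kills every $\gamma_F$ with $F\neq F_0$, hence kills their normal closure $N$, which contains $\gamma_{F_0}$. Therefore $\mathrm{Hol}_\phi(\gamma_{F_0})=1$, and $\phi$ is flat.

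The only point needing care is the identification of $\Sigma'$ with a CW structure whose 2-cells are exactly the faces other than $F_0$, with $\Gamma$ as a deformation retract of the 1-skeleton thickened by the collar of $F_0$. This is standard: $\Sigma'$ deformation retracts onto the complex $\Gamma\cup\bigcup_{F\neq F_0}\overline F$. An alternative is a purely algebraic route, using the single surface relation $\prod_F \gamma_F^{\pm}=1$ (suitably conjugated) in $\pi_1(\Gamma)$ for $S^2$, from which $\gamma_{F_0}$ is expressed through the others. I would use the disk argument since it avoids bookkeeping of orderings, and it is the step I regard as the main (mild) obstacle.
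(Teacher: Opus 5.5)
Your proof is correct and follows essentially the same route as the paper. In both, removing the exceptional face leaves a simply connected disk, and van Kampen forces the holonomy homomorphism to kill $\gamma_{F_0}$; the paper packages this through Proposition~\ref{flatconnchar2} (built on Proposition~\ref{prop:vk}) to conclude that $\phi$ is gauge-trivial, while you apply the relation directly. Your choice to remove only a small open disk inside $F_0$ is a slightly more careful version of the paper's step, since it yields an honest closed disk even when $\overline{F_0}$ is not embedded.
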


\begin{proof}
Suppose that the holonomy is trivial around all faces except for a face $F_0$. Removing the interior of $F_0$ from $S^2$, we may view $\Gamma$ as a map on a disk $D^2$, and the connection $\phi$ can then be understood as a flat connection on this map. Flat connections on disk maps modulo gauge equivalence are in bijection with $\mathrm{Hom}(\pi_1(D^2), G)/G$, which is a one-point set. Therefore, $\phi$ is gauge-equivalent to the trivial connection which assigns the identity element $1 \in G$ to every edge of $\Gamma$. So, the holonomy of $\phi$ around any loop is trivial. In particular, $\phi$ has trivial holonomy around the boundary of the disk, i.e., the boundary of $F_0$.
\end{proof}

\begin{corollary}\label{abCor}
Let $\Gamma$ be a map on a closed, connected, orientable surface, and let $\phi$ be a $G$-connection on $\Gamma$, where $G$ is an Abelian group. 
If the holonomy of $\phi$ around all but one face of $\Gamma$ is trivial, 
then the holonomy around the remaining face is also trivial (i.e., $\phi$ is flat).
\end{corollary}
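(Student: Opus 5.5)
For an Abelian group $G$ the proof reduces to the observation that, when all faces are oriented compatibly with the orientation of $\Sigma$, the product of the face holonomies is $1$. The reason is that each edge occurs exactly twice, once in each direction, and the two contributions cancel.

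Fix an orientation of $\Sigma$ and orient the boundary of each face $F$ as the boundary of $F$ with the induced orientation. Since $G$ is Abelian, the holonomy around $F$ does not depend on the basepoint. It equals
\[
\mathrm{Hol}(F)=\prod_{e\in\partial F}\phi(e),
\]
where the product runs over the oriented edges traversed by the boundary walk of $F$, counted with multiplicity. In a map on a closed surface, each unoriented edge lies on the boundary walks exactly twice. The two occurrences may come from two distinct faces, or from the same face if the edge has that face on both sides. Because $\Sigma$ is orientable and every face carries the induced orientation, the edge is traversed in opposite directions on these two occurrences. We can check this locally: a small neighborhood of an interior point of the edge is a disk, and the edge is cut in two by it, with the two halves carrying opposite induced boundary orientations. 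We then get
\[
\prod_{F}\mathrm{Hol}(F)=\prod_{\text{edges}}\phi(e)\phi(\bar e)=1 .
\]
Here the commutativity of $G$ is used to regroup the factors arbitrarily.

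So if $\mathrm{Hol}(F)=1$ for every face except $F_0$, then $\mathrm{Hol}(F_0)=1$. The condition that the holonomy is trivial is independent of basepoint and orientation, as in the preceding Remark, so this is exactly the claim that $\phi$ is flat.

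The only step needing care is the claim that each edge appears exactly twice with opposite orientations. This is the standard fact that a coherent orientation of $\Sigma$ induces boundary orientations of the $2$-cells whose sum in the cellular chain complex has zero boundary. Equivalently, $\partial[\Sigma]=0$ in $C_1$. I expect to invoke this directly, that is, the fundamental cycle of a closed orientable surface with the induced cell structure.

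An alternative argument uses Proposition~\ref{flatconnchar}. The holonomy factors through $H_1(\Gamma)$, and the faces of $\Gamma$ span exactly $\ker(H_1(\Gamma)\to H_1(\Sigma))$ with a single relation: the sum of the faces is zero. That relation gives the same conclusion, but the direct counting argument is shorter.
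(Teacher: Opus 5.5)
Your proof is correct, but it takes a different route from the paper's.

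The paper's argument runs as follows:
\begin{itemize}
\item It removes the exceptional face $F_0$ and regards $\phi$ as a flat connection on the resulting surface $\Sigma_0$, which has one boundary component.
\item It uses Proposition~\ref{prop:vk} (i.e.\ Van Kampen) to obtain a homomorphism $\pi_1(\Sigma_0)\to G$.
\item Since $G$ is Abelian, this homomorphism factors through $H_1(\Sigma_0,\mathbb{Z})$.
\item It concludes because $\partial\Sigma_0$ is null-homologous in $\Sigma_0$.
\end{itemize}

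You instead work directly with cellular chains. With coherent orientations, each edge is traversed once in each direction, so the product of all face holonomies is $1$.

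Both arguments rest on the same topological fact: $\partial F_0=-\sum_{F\neq F_0}\partial F$ in $C_1$, which is exactly why $\partial\Sigma_0$ bounds in $\Sigma_0$. Your version unpacks this fact and bypasses fundamental groups and Van Kampen entirely. It makes literal the ``cancellation along the common edge'' heuristic from the introduction, and it is more elementary and self-contained.

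The paper's route has a different advantage. It is uniform with Corollaries~\ref{sphereCor} and~\ref{p2}, which all go through the identification of flat connections with $\mathrm{Hom}(\pi_1,G)/G$. It also shows transparently why commutativity is exactly the relevant hypothesis. The boundary word of $\Sigma_0$ is a product of commutators in the free group $\pi_1(\Sigma_0)$. Every Abelian target kills it, while a non-Abelian one need not, which is precisely what Corollary~\ref{p2} exploits.

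Your alternative sketch via $H_1(\Gamma)$ is closer in spirit to the paper's proof. There the kernel of $H_1(\Gamma)\to H_1(\Sigma)$ is $\mathrm{im}\,\partial_2$, with the single relation given by the fundamental class. It uses the homology of the graph instead of $\pi_1(\Sigma_0)$.
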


\begin{proof}
Remove the face with possibly non-trivial holonomy from the surface. This turns $\Gamma$ into a map on a surface $\Sigma_0$ with boundary. Understood as a connection on that map, $\phi$ is flat, so its holonomy defines a homomorphism $\pi_1(\Sigma_0) \to G$. Since $G$ is Abelian, this homomorphism factors through the first homology group $H_1(\Sigma_0, \mathbb{Z})$ via the natural map $\pi_1(\Sigma_0) \to H_1(\Sigma_0, \mathbb{Z})$. But the boundary of $\Sigma_0$ is trivial in $H_1(\Sigma_0, \mathbb{Z})$, so the holonomy of $\phi$ around the boundary of $\Sigma_0$ -- that is, around the removed face -- is trivial.
\end{proof}

\begin{corollary}\label{p2}
Suppose $\Gamma$ is a map on a closed, connected, orientable surface $\Sigma$ of genus $g > 0$, 
and let $G$ be a non-Abelian group. Fix a face $F_0$ of $\Gamma$. 
%
   Then there exists a $G$-connection on $\Gamma$ whose holonomy around all faces except $F_0$ is trivial, while the holonomy around $F_0$ is non-trivial.
\end{corollary}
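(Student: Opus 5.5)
Remove the interior of $F_0$ from $\Sigma$, as in the proof of Corollary~\ref{abCor}, to obtain a compact surface $\Sigma_0$ of genus $g$ with one boundary circle, on which $\Gamma$ is a map. A $G$-connection on $\Gamma$ with trivial holonomy around all faces other than $F_0$ is exactly a flat connection on this map of $\Sigma_0$. By Proposition~\ref{flatconnchar2}, applied to $\Sigma_0$, such connections modulo gauge correspond to $\mathrm{Hom}(\pi_1(\Sigma_0), G)/G$. Under this correspondence, the holonomy around $\partial F_0$, based at a vertex on the boundary, is the image of the boundary loop class. It therefore suffices to find a homomorphism $\rho\colon \pi_1(\Sigma_0)\to G$ that sends the boundary class to a nontrivial element. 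Since gauge transformations only conjugate the holonomy, this nontriviality does not depend on the chosen representative connection.

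Next, use the standard presentation. The group $\pi_1(\Sigma_0)$ is free on $a_1,b_1,\dots,a_g,b_g$, and the boundary loop is freely homotopic to, hence conjugate to, $\prod_{i=1}^g [a_i,b_i]$. Because the group is free, any assignment of the generators extends to a homomorphism. Since $G$ is non-Abelian, pick $x,y\in G$ with $xy\neq yx$. Set $\rho(a_1)=x$, $\rho(b_1)=y$, and $\rho(a_i)=\rho(b_i)=1$ for $i\ge 2$; this uses $g\ge 1$. Then the boundary maps to a conjugate of $[x,y]=xyx^{-1}y^{-1}\neq 1$.

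Finally, transport this back. Take the connection $\phi$ given by Proposition~\ref{flatconnchar2}, or built directly as described before Proposition~\ref{flatconnchar}: assign $1$ along a spanning tree and extend so that the holonomy equals $\rho$. It is flat on $\Sigma_0$, so its holonomy is trivial around every face except $F_0$. Its holonomy around $F_0$ is conjugate to $\rho(\partial\Sigma_0)\neq1$.

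The only real point to verify is the identification of the boundary of $\Sigma_0$ with the product of commutators inside the free group $\pi_1(\Sigma_0)$, up to conjugation and inversion. This is the standard fact that $\Sigma$ minus a disk deformation retracts onto a wedge of $2g$ circles, with the boundary word $\prod[a_i,b_i]$ coming from the usual $4g$-gon model. Conjugation and inversion do not affect triviality, so this suffices.
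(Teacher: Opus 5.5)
Your proposal is correct and follows essentially the same argument as the paper. Like the paper, you remove $F_0$ so that flat connections on $\Sigma_0$ correspond to $\mathrm{Hom}(\pi_1(\Sigma_0),G)/G$ with $\pi_1(\Sigma_0)$ free on $a_1,b_1,\dots,a_g,b_g$, then send $a_1,b_1$ to non-commuting $x,y$ and all other generators to $1$, so that the boundary of $F_0$ has holonomy conjugate to $xyx^{-1}y^{-1}\neq 1$.
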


\begin{proof}
$G$-connections on $\Gamma$ whose holonomies around all faces except $F_0$ are trivial are precisely flat $G$-connections when $\Gamma$ is viewed as a map on $\Sigma_0 = \Sigma \setminus F_0$. Thus, to show that the holonomy around $F_0$ can be non-trivial, it suffices to construct a flat connection on $\Sigma_0$ whose holonomy along its boundary is non-trivial. The surface $\Sigma_0$ is a genus $g$ surface with one boundary component, so its fundamental group is a free group on $2g$ generators. The generators $a_1, \dots, a_g, b_1, \dots, b_g$ may be chosen so that the conjugacy class in $\pi_1(\Sigma_0)$ corresponding to the boundary contains the element
\[
\gamma = a_1 b_1 a_1^{-1} b_1^{-1} \cdots a_g b_g a_g^{-1} b_g^{-1}.
\]
Choose non-commuting elements $x, y \in G$, and define a homomorphism $\rho \colon \pi_1(\Sigma_0) \to G$ by 
\[
\rho(a_1) = x, \quad \rho(b_1) = y, \quad \rho(a_2) = \rho(b_2) = \dots = \rho(a_g) = \rho(b_g) =  1.
\] 
By Proposition~\ref{flatconnchar2}, this defines a flat $G$-connection on $\Gamma$. Its holonomy along the boundary of $\Sigma_0$ -- that is, around the removed face $F_0$ -- is
\[
\rho(\gamma) = xyx^{-1}y^{-1} \neq 1,
\]
as desired.
\end{proof}

\section{Proof of the main theorem of Section \ref{sec:ps}}\label{proofSec}
In this section we prove Theorem \ref{mainThmPS}. 
\begin{proposition}\label{prop:Men}
Suppose that $A_1, \dots, A_m \in \P^n(\D)$ are linearly independent, and that for each $i$ (with indices taken modulo $m$) we are given a point
$
B_i \in A_iA_{i+1},
$
distinct from both $A_i$ and $A_{i+1}$. Let $\mathbf A_i, \mathbf B_i \in \D^{n+1}$ be lifts of $A_i$ and $B_i$, respectively, so that
\[
\mathbf B_i = \alpha_i \mathbf A_i + \beta_i \mathbf A_{i+1},
\qquad
\alpha_i,\beta_i \in \D, \quad \alpha_i, \beta_i \neq 0.
\]
Then the following hold:

\begin{enumerate}
\item The points $B_1, \dots, B_{m-1}$ span a hyperplane in the subspace $\langle A_1, \dots, A_m\rangle \subset  \P^n(\D)$.
\item This hyperplane contains none of the points $A_1, \dots, A_m$.
\item The hyperplane contains $B_m$ if and only if
\begin{equation}\label{eqstar}
(-\alpha_1^{-1}\beta_1)\cdots(-\alpha_m^{-1}\beta_m)=1 .
\end{equation}
\end{enumerate}
\end{proposition}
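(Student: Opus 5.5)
Since $A_1,\dots,A_m$ are linearly independent, the lifts $\mathbf A_1,\dots,\mathbf A_m$ form a basis of the left $\D$-subspace $W=\langle \mathbf A_1,\dots,\mathbf A_m\rangle\subset\D^{n+1}$. I will work in these coordinates: a vector $\sum_i \lambda_i \mathbf A_i$ is recorded by its row $(\lambda_1,\dots,\lambda_m)$. In these coordinates $\mathbf B_i$ has entries $\alpha_i$ in position $i$, $\beta_i$ in position $i+1$, and zeros elsewhere.

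For parts 1 and 2, I take $i=1,\dots,m-1$. Then the rows of $\mathbf B_1,\dots,\mathbf B_{m-1}$ form an echelon (bidiagonal) matrix with nonzero pivots $\alpha_1,\dots,\alpha_{m-1}$. Hence they are left-linearly independent and span an $(m-1)$-dimensional subspace $H\subset W$, which is a hyperplane. For part 2, suppose $\mathbf A_j=\sum_{i<m}\lambda_i\mathbf B_i$. Comparing the first coordinate gives $\lambda_1\alpha_1=\delta_{1j}$. Coordinate $k$ for $2\le k\le m-1$ gives $\lambda_{k-1}\beta_{k-1}+\lambda_k\alpha_k=\delta_{kj}$, and the last coordinate gives $\lambda_{m-1}\beta_{m-1}=\delta_{mj}$. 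All $\alpha_i,\beta_i$ are nonzero, so these equations propagate forward. Coordinates before $j$ force $\lambda_1=\dots=\lambda_{j-1}=0$. Coordinate $j$ then forces $\lambda_j\ne 0$ if $j<m$, and from then on every $\lambda_k$ with $k\ge j$ is nonzero. This contradicts the last equation whenever $j<m$. If $j=m$, then all $\lambda_i=0$, which contradicts $\lambda_{m-1}\beta_{m-1}=1$. So no $A_j$ lies in $H$.

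For part 3, I will describe $H$ by a linear functional instead. Since we use left vector spaces, the hyperplane $H$ is the kernel of a map $f\colon W\to\D$ of the form $f(\sum\lambda_i\mathbf A_i)=\sum\lambda_i c_i$ for some $c_i\in\D$. The condition $f(\mathbf B_i)=0$ for $i<m$ reads $\alpha_ic_i+\beta_ic_{i+1}=0$, that is, $c_{i+1}=-\beta_i^{-1}\alpha_ic_i$. With $c_1=1$ this gives $c_{k}=(-\beta_{k-1}^{-1}\alpha_{k-1})\cdots(-\beta_1^{-1}\alpha_1)$. This also recovers part 2, since all $c_j\ne0$. Then $B_m\in H$ if and only if $\alpha_mc_m+\beta_mc_1=0$, that is, $(-\beta_m^{-1}\alpha_m)c_m=1$. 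This says
\[
(-\beta_m^{-1}\alpha_m)(-\beta_{m-1}^{-1}\alpha_{m-1})\cdots(-\beta_1^{-1}\alpha_1)=1 .
\]
Taking inverses of both sides turns this into exactly \eqref{eqstar}.

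The main point needing care is the order of multiplication: the scalars act on the left of vectors, so functionals multiply coordinates on the right. I expect the indexing and inversion step at the end to be where mistakes are most likely, and I will check it with $m=2$.
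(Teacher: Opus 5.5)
Your proof is correct and is essentially the paper's. Part 3 is exactly the paper's argument: solve $\alpha_i c_i+\beta_i c_{i+1}=0$ recursively for the values $c_i=\Bell(\mathbf A_i)$ of the annihilating functional, impose $f(\mathbf B_m)=0$, and invert the product. The only difference is that you prove parts 1 and 2 by a separate echelon computation on the $\mathbf B_i$, whereas the paper reads both off the same functional computation (the solutions $(c_i)$ form a rank-one right $\D$-module, and all $c_i\neq 0$), as you also note. Your normalization $c_1=1$ is harmless, since $c_1=0$ would force $f=0$.
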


\begin{remark}
For $m=3$ this reduces to the non-commutative version of Menelaus' theorem \cite[Theorem~4.12]{retakh2020noncommutative}. For arbitrary $m$, it provides the non-commutative analogue of \cite[Theorem~9.12]{bobenko2008}.
\end{remark}
\begin{proof}[\bf Proof of Proposition \ref{prop:Men}]
To prove the first statement, it suffices to show that there exists a unique, up to right multiplication by elements of $\D$,   homomorphism
$
\Bell \colon \langle \mathbf A_1, \dots, \mathbf A_m \rangle \to \D
$
of left $\D$-modules such that
\[
\Bell(\mathbf B_1)=\dots=\Bell(\mathbf B_{m-1})=0.
\]
Writing $\lambda_i=\Bell(\mathbf A_i)$ and using
$
\mathbf B_i=\alpha_i \mathbf A_i+\beta_i \mathbf A_{i+1},
$
these conditions are equivalent to
\[
\alpha_i\lambda_i+\beta_i\lambda_{i+1}=0,
\qquad i=1,\dots,m-1.
\]
 The solution space of this system is a rank one right $\D$-module given by
\[
\lambda_i
=
(-\beta_{i-1}^{-1}\alpha_{i-1})
\cdots
(-\beta_1^{-1}\alpha_1)\,\lambda_1,
\]
proving the first statement. Also, since $\lambda_i\neq0$ for all $i$, none of the points $A_1,\dots,A_m$ lies in the hyperplane determined by~$\Bell$, which proves the second statement. Finally, $\Bell(\mathbf B_m)=0$ is equivalent to
\[
\alpha_m\lambda_m+\beta_m\lambda_1=0,
\]
and substituting the above expression for $\lambda_m$ gives
\[
(-\beta_m^{-1}\alpha_m)\cdots(-\beta_1^{-1}\alpha_1)=1,
\]
which is equivalent to \eqref{eqstar}.
\end{proof}

Let $\D^\times$ denote the group of units (non-zero elements) in a division ring $\D$. Given a polygonal subdivision $\mathcal P$ of a closed, connected, orientable surface $\Sigma$
and a realization of that subdivision in $\P^n(\D)$, we construct a 
$\D^{\times}$-connection on the $1$-skeleton of $\mathcal P$ as follows. First, lift all points associated with vertices and edges of $\mathcal P$ to $\D^{n+1}$. 
Let $e$ be an oriented edge of $\mathcal P$, and let $\mathbf B_e , \mathbf A_t, \mathbf A_h \in \D^{n+1}$ 
denote the vectors associated with $e$, its tail, and its head, respectively. 
By definition of a realization, there exist non-zero elements $\alpha,\beta \in \D$ such that
\[
\mathbf B_e = \alpha \mathbf A_t + \beta \mathbf A_h.
\]
We then define a $\D^{\times}$-valued connection $\phi$ on edges by
\[
\phi(e) := -\alpha^{-1} \beta.
\]
It is straightforward to check that $\phi$ is invariant under rescaling of the vectors 
assigned to edges, whereas rescaling the vectors assigned to vertices changes the connection 
by a gauge transformation. Hence, each realization of the subdivision $\mathcal P$ gives rise to a connection 
on its $1$-skeleton that is well-defined up to gauge equivalence. Furthermore, by Proposition~\ref{prop:Men}, a realization satisfies the Menelaus condition 
at a given face if and only if the holonomy of the associated connection around that face is trivial.


Now, the incidence theorem in $\P^n(\D)$ associated with 
$\mathcal P$ can be reformulated as follows: for every $\D^{\times}$-connection 
on the $1$-skeleton of $\mathcal P$ which comes from a realization in $\P^n(\D)$, 
if the holonomy around all but one face is trivial, then the holonomy around the 
remaining face is also trivial.

\begin{proposition}\label{P1}
Suppose that a polygonal subdivision $\mathcal P$ has a face with more than $n+1$ vertices.  
Then the theorem associated with $\mathcal P$ holds in $\P^n(\D)$.
\end{proposition}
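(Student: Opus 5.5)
The statement is vacuous, as already indicated in Remark~\ref{algMen}: the plan is to show that $\mathcal P$ has no realization in $\P^n(\D)$ at all, so the universally quantified statement of Definition~\ref{RGT2} holds trivially.

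Let $F$ be a face of $\mathcal P$ with $m > n+1$ vertices. Condition~2 in the definition of a realization requires that the points assigned to the vertices of $F$ be linearly independent in $\P^n(\D)$. Taking lifts $\mathbf A_1, \dots, \mathbf A_m \in \D^{n+1}$, this means these $m$ vectors are left linearly independent over $\D$.

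This is impossible. Over a division ring, every left vector space has a well-defined dimension (invariant basis number), and any linearly independent family extends to a basis. Hence $\D^{n+1}$ contains no more than $n+1$ linearly independent vectors, so $m \le n+1$, a contradiction.

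Here it matters that the closure of every $2$-cell of a polygonal subdivision is an \emph{embedded} polygon. This guarantees that the $m$ vertices of $F$ are distinct vertices of $\mathcal P$, and so give $m$ genuinely distinct points whose independence is demanded, rather than a list with repetitions. There is no real obstacle; the only point requiring any care is citing the standard fact that dimension is well defined for vector spaces over noncommutative division rings.

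Therefore $\mathcal P$ admits no realization in $\P^n(\D)$, and the incidence theorem associated with $\mathcal P$ holds vacuously.
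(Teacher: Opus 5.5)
Your proposal is correct and follows the paper's own proof: no realization of $\mathcal P$ in $\P^n(\D)$ can exist, because a face with more than $n+1$ vertices would need more than $n+1$ linearly independent vectors in $\D^{n+1}$, so the theorem holds vacuously. The paper states this in one line, and your remarks on well-defined dimension over division rings and on the distinctness of a face's vertices are harmless added detail.
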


\begin{proof}
In this case, $\mathcal P$ cannot be realized in $\P^n(\D)$,  
so the corresponding theorem is vacuously true.
\end{proof}

\begin{proposition}\label{P2}
Suppose that $\mathcal P$ is a polygonal subdivision of the sphere.  
Then the theorem associated with $\mathcal P$ holds in $\P^n(\D)$.
\end{proposition}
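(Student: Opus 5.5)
The plan is to reduce Proposition~\ref{P2} directly to Corollary~\ref{sphereCor}, using the dictionary between realizations and connections set up just above. Fix a realization of $\mathcal P$ in $\P^n(\D)$ in which the Menelaus condition holds at every face except possibly one face $F_0$. We choose lifts $\mathbf A_v$ of the vertex points and $\mathbf B_e$ of the edge points. We then form the $\D^\times$-connection $\phi$ on the $1$-skeleton $\Gamma$ of $\mathcal P$ by $\phi(e)=-\alpha^{-1}\beta$, where $\mathbf B_e=\alpha\mathbf A_{t(e)}+\beta\mathbf A_{h(e)}$.

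First we check that $\phi$ really is a connection, i.e. $\phi(\bar e)=\phi(e)^{-1}$. Writing the same vector as $\mathbf B_e=\beta\mathbf A_{h(e)}+\alpha\mathbf A_{t(e)}$ gives $\phi(\bar e)=-\beta^{-1}\alpha=(-\alpha^{-1}\beta)^{-1}$. Next we note that $\Gamma$ is a map on $S^2$, since the faces of a polygonal subdivision are open $2$-cells.

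Then we translate the Menelaus condition at each face $F$ into holonomy. Label the vertices of $F$ cyclically as $A_1,\dots,A_m$ and its edges as $B_1,\dots,B_m$, with $B_i\in A_iA_{i+1}$, and orient each edge from $A_i$ to $A_{i+1}$. The realization conditions say the $A_i$ are linearly independent and each $B_i$ is distinct from its endpoints, so $\alpha_i,\beta_i\neq0$. Proposition~\ref{prop:Men} therefore applies. Its parts (1) and (3) show that $B_1,\dots,B_m$ are linearly dependent if and only if $B_m$ lies in the hyperplane spanned by $B_1,\dots,B_{m-1}$, which in turn holds if and only if \eqref{eqstar} holds. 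The left-hand side of \eqref{eqstar} is exactly $\mathrm{Hol}_\phi(\partial F)$ based at $A_1$.

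The one point needing care is the equivalence between \emph{linear dependence} of $B_1,\dots,B_m$ and $B_m$ lying in the span of $B_1,\dots,B_{m-1}$. This follows because, by part (1), $B_1,\dots,B_{m-1}$ already span an $(m-2)$-dimensional subspace and are hence independent. We also use the earlier remark that triviality of holonomy does not depend on the choice of basepoint or orientation.

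With this dictionary, the hypothesis says $\phi$ has trivial holonomy around every face except $F_0$. Corollary~\ref{sphereCor} then gives trivial holonomy around $F_0$ as well, and reading the dictionary backwards shows that the Menelaus condition holds at $F_0$. I do not expect a genuine obstacle. The only subtlety is the noncommutative bookkeeping, namely that the product in \eqref{eqstar} is taken in the order matching the loop, and this is already built into Proposition~\ref{prop:Men} and the definition of holonomy.
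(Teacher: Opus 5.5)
Your proposal is correct and follows the paper's proof. The paper also applies Corollary~\ref{sphereCor} to the $\D^\times$-connection $\phi(e)=-\alpha^{-1}\beta$ coming from a realization, and uses Proposition~\ref{prop:Men} to show that the Menelaus condition at a face is equivalent to trivial holonomy around it; you additionally write out the checks the paper leaves implicit, namely that $\phi(\bar e)=\phi(e)^{-1}$ and that, since $B_1,\dots,B_{m-1}$ are already independent, linear dependence of $B_1,\dots,B_m$ means $B_m$ lies in their span.
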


\begin{proof}
By Corollary~\ref{sphereCor}, any connection on the $1$-skeleton of $\mathcal P$ has the property that if the holonomy around all but one face is trivial, then the holonomy around the remaining face is also trivial.  
In particular, this is so for connections arising from realizations of $\mathcal P$  in $\P^n(\D)$, which implies that the corresponding theorem holds.
\end{proof}

\begin{proposition}\label{P3}
Let $\mathcal P$ be a polygonal subdivision of a closed, connected, orientable surface. Suppose that $\D$ is a field.  
Then the theorem associated with $\mathcal P$ holds in $\P^n(\D)$.
\end{proposition}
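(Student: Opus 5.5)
The plan is to mirror the proof of Proposition~\ref{P2}, using Corollary~\ref{abCor} in place of Corollary~\ref{sphereCor}. Fix a realization of $\mathcal P$ in $\P^n(\D)$ in which the Menelaus condition holds on all faces except possibly one, say $F_0$. Lift all assigned points to $\D^{n+1}$ and form the associated $\D^\times$-connection $\phi$ on the $1$-skeleton of $\mathcal P$, with $\phi(e) = -\alpha^{-1}\beta$ as constructed above.

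First we check that $\phi$ is a genuine connection, i.e.\ that $\phi(\bar e) = \phi(e)^{-1}$. Reversing orientation swaps the roles of $\alpha$ and $\beta$ in $\mathbf B_e = \alpha \mathbf A_t + \beta \mathbf A_h$, so $\phi(\bar e) = -\beta^{-1}\alpha = (-\alpha^{-1}\beta)^{-1}$. This holds over any division ring.

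Next, Proposition~\ref{prop:Men} applies face by face. The vertex points of each face are linearly independent by the definition of a realization, and each edge point is distinct from its endpoints, so all coefficients $\alpha_i, \beta_i$ are nonzero. The proposition then says the Menelaus condition at a face is equivalent to \eqref{eqstar}, i.e.\ to triviality of the holonomy of $\phi$ around that face. Here the holonomy is read with the face's boundary oriented so that the edge coefficients match the cyclic labeling $B_i \in A_iA_{i+1}$. By the remark after the definition of flatness, neither the basepoint nor the orientation affects triviality.

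Since $\D$ is a field, $G = \D^\times$ is Abelian. The holonomy of $\phi$ is trivial around every face except $F_0$, so Corollary~\ref{abCor} gives trivial holonomy around $F_0$ as well. Applying Proposition~\ref{prop:Men} once more, the Menelaus condition holds at $F_0$, which proves the theorem.

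There is no real obstacle; the only point needing care is that the $1$-skeleton of a polygonal subdivision is a map in the sense of Section~\ref{secCon}. Every face is an open $2$-cell, hence a disk, so Corollary~\ref{abCor} applies directly.
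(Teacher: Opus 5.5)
Your proposal is correct and follows the paper's proof. You build the $\D^\times$-connection from the realization, use Proposition~\ref{prop:Men} to identify the Menelaus condition with trivial holonomy around a face, and apply Corollary~\ref{abCor} because $\D^\times$ is Abelian; the extra checks you add ($\phi(\bar e)=\phi(e)^{-1}$, and that the $1$-skeleton is a map) are correct details the paper leaves implicit.
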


\begin{proof}
By Corollary~\ref{abCor}, any connection on the $1$-skeleton of $\mathcal P$ valued in an Abelian group has the property that if the holonomy around all but one face is trivial, then the holonomy around the remaining face is also trivial.  
In particular, this is so $\D^{\times}$-connections arising from realizations of $\mathcal P$ in $\P^n(\D)$, which implies that the corresponding theorem holds.
\end{proof}

It remains to show that if none of these conditions hold, then the theorem in $\P^n(\D)$ associated with $\mathcal P$ fails.  
We begin with the following.

\begin{lemma}\label{lem:finite_union_submodules}
A finitely generated module over an infinite division ring $\D$
cannot be represented as a finite union of its proper submodules.
\end{lemma}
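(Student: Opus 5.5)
Let $M$ be a finitely generated left $\D$-module and suppose, for contradiction, that $M = M_1 \cup \dots \cup M_k$ with each $M_i$ a proper submodule. We take $k$ minimal. Then $k \ge 2$, since a single proper submodule cannot equal $M$. By minimality, no $M_i$ is contained in the union of the others. Finite generation is not really needed for this argument; it only serves to place the lemma in the context where it will be used.

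By minimality, choose $x \in M_1$ with $x \notin M_2 \cup \dots \cup M_k$. Since $M_1 \neq M$, also choose $y \in M \setminus M_1$. Now consider the elements
\[
y + \lambda x, \qquad \lambda \in \D.
\]
None of them lies in $M_1$: if $y+\lambda x \in M_1$, then $y \in M_1$ because $\lambda x \in M_1$, a contradiction. So each $y+\lambda x$ lies in some $M_i$ with $i \ge 2$.

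Since $\D$ is infinite and there are only $k-1$ indices $i\ge 2$, the pigeonhole principle gives distinct $\lambda \neq \mu$ with $y+\lambda x$ and $y+\mu x$ in the same $M_i$, $i\ge2$. Subtracting, $(\lambda-\mu)x \in M_i$. Multiplying on the left by $(\lambda-\mu)^{-1}$, which is allowed because $M_i$ is a left submodule and $\D$ is a division ring, gives $x \in M_i$. This contradicts the choice of $x$, and the lemma follows.

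The only point needing care is noncommutativity. The scalar $\lambda$ multiplies $x$ on the left, and left submodules are closed under left multiplication by $\lambda - \mu$ and by its inverse. With this convention everything is consistent, and I do not expect any real obstacle.
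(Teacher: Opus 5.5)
Your proof is correct, and it takes a genuinely different route from the paper's. You use the classical minimal-cover argument. With $x \in M_1$ outside $M_2 \cup \dots \cup M_k$ and $y \notin M_1$, the ``line'' $\{y + \lambda x\}_{\lambda \in \D}$ misses $M_1$. By pigeonhole two of its points then fall into a common $M_i$, which forces $x \in M_i$.

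The paper instead inducts on the rank:
\begin{itemize}
\item it fixes a basis $e_1, \dots, e_{k+1}$ of $M$ and notes that the submodules $N_a = \langle e_1 + a e_2, e_3, \dots, e_{k+1}\rangle$, $a \in \D$, are pairwise distinct of rank $k$;
\item it uses that $\D$ is infinite to pick some $N_a$ different from every $M_i$;
\item it applies the induction hypothesis to the induced cover $N_a = \bigcup_i (M_i \cap N_a)$ to get $N_a \subseteq M_i$ for some $i$;
\item it concludes $M_i = M$, because $M_i$ strictly contains a rank-$k$ submodule of a rank-$(k+1)$ module.
\end{itemize}

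The paper's route is a reduction to lower dimension by slicing with hyperplanes. It fits the projective-geometric viewpoint, but it depends on bases, finite generation, and the rank theory of modules over a division ring.

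Your argument needs none of that: no bases, no induction, and no finite generation, as you note. It only uses that $\lambda - \mu$ is a unit for distinct $\lambda, \mu$ in an infinite set of scalars. So it applies unchanged to modules over any algebra over an infinite field, taking $\lambda$ in that field. Your count also gives slightly more than the lemma needs. Over a finite field $\mathbb{F}_q$ it rules out covers by at most $q$ proper subspaces. This is sharp, since $\mathbb{F}_q^2$ is the union of its $q+1$ one-dimensional subspaces.
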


\begin{proof}
We argue by induction on the rank. For rank one, the result is immediate, since the only proper submodule is the zero module. Assume the statement holds for all modules of rank at most $k$, and consider a module $M$ of rank $k+1$. Suppose that
\[
M= \bigcup_{i=1}^n M_i,
\]
where each $M_i$ is a submodule. We must show that $M_i = M$ for some $i$. Fix a basis $e_1, \dots, e_{k+1}$ in $M$. For each $a \in \D$, consider the submodule
\[
N_\alpha := \langle e_1 + ae_2, e_3, \dots, e_{k+1}\rangle \subset M.
\]
The family $\{N_\alpha\}_{\alpha \in \D}$ consists of pairwise distinct submodules. So, since $\D$ is infinite, there exists $\alpha \in \D$ such that $M_i \neq N_\alpha$ for all $i=1,\dots,n$. For this choice of $\alpha$, we have
\[
N_\alpha = \bigcup_{i=1}^n \,(M_i \cap N_\alpha).
\]
Since $N_\alpha$ has rank $k$, by the induction hypothesis some $M_i \cap N_\alpha$ equals $N_\alpha$, so $N_\alpha \subseteq M_i$. But since $M_i \neq N_\alpha$, it follows that $M_i$ has rank $k+1$, and hence $M_i = M$, as required.
\end{proof}

\begin{corollary}\label{corinf}
Suppose $\D$ is an infinite division ring and $k \ge 1$.
Then there exists an infinite subset $S \subset \D^k$ such that
any $k$-element subset of $S$ is linearly independent.
\end{corollary}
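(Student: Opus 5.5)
We construct $S$ one element at a time, using Lemma~\ref{lem:finite_union_submodules} at each step. Call a finite set $S_N=\{v_1,\dots,v_N\}\subset\D^k$ \emph{good} if any $k$ of its elements are linearly independent, equivalently if any at most $k$ of its elements are. We will build an increasing chain $S_k\subset S_{k+1}\subset\cdots$ of good sets and take $S=\bigcup_N S_N$. This union is infinite, and any $k$-element subset of it lies in some $S_N$, so it is linearly independent.

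For the base case we take $S_k=\{e_1,\dots,e_k\}$, the standard basis. If $k=1$ the statement is trivial: take $S=\D^\times$, which is infinite since $\D$ is. For the inductive step, suppose $S_N$ is good with $N\ge k$. Consider the finitely many submodules $M_I=\langle v_i : i\in I\rangle$, where $I$ ranges over $(k-1)$-element subsets of $\{1,\dots,N\}$. Each $M_I$ is spanned by $k-1$ vectors, so it has rank $k-1<k$ and is a proper submodule of $\D^k$. By Lemma~\ref{lem:finite_union_submodules}, $\D^k\neq\bigcup_I M_I$, so we can choose $v_{N+1}\notin\bigcup_I M_I$. In particular $v_{N+1}\notin M_I$ forces $v_{N+1}$ to be distinct from all earlier $v_i$, so the set really grows.

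We then check that $S_{N+1}=S_N\cup\{v_{N+1}\}$ is good. A $k$-subset not containing $v_{N+1}$ is independent by hypothesis. A $k$-subset containing $v_{N+1}$ consists of $v_{N+1}$ together with $v_i$ for $i\in I$, where $|I|=k-1$. The vectors $v_i$, $i\in I$, are independent because $S_N$ is good, and $v_{N+1}\notin M_I$. Over a division ring, adjoining a vector outside the span of an independent family keeps the family independent. Indeed, in a relation $c\,v_{N+1}+\sum c_i v_i=0$ with $c\neq0$, we could multiply on the left by $c^{-1}$ and express $v_{N+1}$ in $M_I$, a contradiction; so $c=0$, and then all $c_i=0$.

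The only delicate point is making sure the lemma applies, which needs $\D$ infinite and each $M_I$ proper. Both hold here. Finiteness of the union at each stage is automatic because $S_N$ is finite, so there is no real obstacle.
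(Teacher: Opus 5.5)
Your proof is correct and follows the paper's argument essentially verbatim: start from a basis and, at each step, use Lemma~\ref{lem:finite_union_submodules} to choose a new vector outside the finitely many spans of $(k-1)$-element subfamilies. You also fill in details the paper leaves implicit: you treat $k=1$ separately, where $M_\emptyset=0$ does not force the new vector to differ from the earlier ones, and you check that adjoining a vector outside the span of an independent family preserves independence.
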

\begin{proof}
We construct $S$ inductively.  Choose $\mathbf A_1, \dots, \mathbf A_k$ to be a basis of $\D^k$. Suppose vectors $\mathbf A_1, \dots, \mathbf A_m$ have been chosen, with $m \ge k$, such that any $k$ of them are linearly independent.  
For each $(k-1)$-element subset $I \subset \{1, \dots, m\}$, let $M_I$ be the submodule of $\D^k$ spanned by $\{\mathbf A_i \mid i \in I\}$. Each $M_I$ is a proper submodule of~$\D^k$. Since there are only finitely many $(k-1)$-element subsets $I \subset \{1, \dots, m\}$, there are only finitely many such submodules $M_I$, and hence their union cannot cover all of $\D^k$.  Therefore, we may choose $\mathbf A_{m+1} \in \D^k$ outside this union. By construction, $\mathbf A_{m+1}$ does not lie in the span of any $k-1$ of the vectors $\mathbf A_1, \dots, \mathbf A_m$, so any $k$-element subset of $\{\mathbf A_1, \dots, \mathbf A_{m+1}\}$ is linearly independent.  Continuing inductively, we obtain an infinite set
$
S = \{\mathbf A_1, \mathbf A_2, \dots\}
$
with the desired property.
\end{proof}

We now complete the proof of Theorem \ref{mainThmPS}. 
\begin{proposition}\label{P4}
Let $\mathcal{P}$ be a polygonal subdivision of a closed, connected, orientable surface $\Sigma$ of positive genus, and let $\mathbb{D}$ be a non-commutative division ring. Assume that all faces of $\mathcal{P}$ have at most $n+1$ vertices. Then, for any face $F_0$ of $\mathcal{P}$, there exists a realization of $\mathcal{P}$ in $\P^n(\mathbb{D})$ such that the Menelaus condition holds on all faces except $F_0$ and fails on $F_0$.

\end{proposition}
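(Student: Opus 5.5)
Start from Corollary~\ref{p2} applied to $G=\D^\times$, which is non-Abelian because $\D$ is non-commutative. This gives a $\D^\times$-connection $\phi$ on the $1$-skeleton of $\mathcal P$ with trivial holonomy around every face except $F_0$ and non-trivial holonomy around $F_0$. It then remains to show that \emph{every} $\D^\times$-connection, or at least one gauge-equivalent to $\phi$ (holonomy conditions are gauge invariant), arises from a realization in $\P^n(\D)$. Once that is established, Proposition~\ref{prop:Men} converts trivial holonomy into the Menelaus condition face by face, and we are done.

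\textbf{Constructing the realization.} Since $\D$ is non-commutative it is infinite: a finite division ring is a field by Wedderburn's theorem. Corollary~\ref{corinf} with $k=n+1$ then supplies an infinite set $S\subset\D^{n+1}$ in which any $n+1$ vectors are linearly independent.
\begin{enumerate}
\item Assign to the vertices of $\mathcal P$ pairwise distinct vectors $\mathbf A_v\in S$.
\item Every face has at most $n+1$ vertices, and the closure of each face is an embedded polygon, so its vertices are distinct. Hence the vectors at the vertices of each face are linearly independent, which is condition~2 of a realization.
\item For an oriented edge $e$ from $t$ to $h$, set $\mathbf B_e=\mathbf A_t-\phi(e)\mathbf A_h$. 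This gives $\alpha=1$ and $\beta=-\phi(e)$, so the associated connection value is $-\alpha^{-1}\beta=\phi(e)$.
\item The assignment is consistent under reversing orientation. Left-multiplying $\mathbf B_e$ by $-\phi(e)^{-1}$ gives $\mathbf A_h-\phi(e)^{-1}\mathbf A_t$, which matches the formula for $\bar e$ since $\phi(\bar e)=\phi(e)^{-1}$.
\end{enumerate}

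\textbf{Realization conditions and the main obstacle.} The edge point $B_e$ lies on the line $A_tA_h$ and differs from $A_t$ and $A_h$, because both coefficients are nonzero and $\mathbf A_t,\mathbf A_h$ are independent. The step needing care is that $A_t\neq A_h$, i.e.\ the edge is not a loop. This holds because each edge lies on the boundary of some face, and that face is an embedded polygon with at least two vertices. Parallel edges between the same pair of vertices cause no trouble, since the definitions put no constraint relating different edges.

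So the connection attached to this realization is exactly $\phi$. By Proposition~\ref{prop:Men}, the Menelaus condition holds on every face other than $F_0$ and fails on $F_0$, where the holonomy is non-trivial. Together with Propositions~\ref{P1}, \ref{P2}, \ref{P3}, this completes the proof of Theorem~\ref{mainThmPS}.
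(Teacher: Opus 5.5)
Your proof is correct and follows the paper's argument: Corollary~\ref{p2} gives the connection, Wedderburn's theorem and Corollary~\ref{corinf} give the vertex vectors, and the edge points are set to $\mathbf B_e=\mathbf A_t-\phi(e)\mathbf A_h$. Your additional checks (the edge point does not depend on the chosen orientation, edges are not loops, and the vertices of each face are distinct because its closure is embedded) make explicit details that the paper leaves implicit.
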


\begin{proof}
Since $\Sigma$ has positive genus and $\mathbb{D}$ is non-commutative, Corollary~\ref{p2} guarantees the existence of a $\mathbb{D}^{\times}$-connection $\phi$ on the $1$-skeleton of $\mathcal{P}$ with trivial holonomy around all faces except $F_0$ and non-trivial holonomy around $F_0$. Let us show that there exists a realization of $\mathcal{P}$ in $\P^n(\D)$ that gives rise to this connection. Since $\mathbb{D}$ is non-commutative, it is infinite by Wedderburn’s little theorem; hence, by Corollary~\ref{corinf}, there exists an infinite subset of $\mathbb{D}^{n+1}$ such that any $n+1$ elements are linearly independent. Assign distinct vectors from this set to the vertices of $\mathcal{P}$. Then the vectors corresponding to the vertices of each face are linearly independent. For each edge $e$, fix an orientation and define
\[
\mathbf{B}_e := \mathbf{A}_t - \phi(e)\,\mathbf{A}_h,
\]
where $\mathbf{A}_t$ and $\mathbf{A}_h$ denote the vectors assigned to the tail and head of $e$, respectively. Projecting all vertex and edge vectors to $\P^n(\D)$ then yields a realization of $\mathcal{P}$ whose associated connection is $\phi$. By construction, this realization satisfies the Menelaus condition on all faces except $F_0$, where it fails.
\end{proof}

Theorem~\ref{mainThmPS} now follows directly from Propositions~\ref{P1}, \ref{P2}, \ref{P3}, and \ref{P4}.

\begin{remark}\label{algMen2}
If we drop the requirement for the points assigned to the vertices of a face to be linearly independent, as suggested in Remark \ref{algMen}, then there is no need for an infinite collection of vectors such that any $n+1$ of them are linearly independent. Instead, one may assign to the vertices of $\mathcal P$ \emph{pairwise} linearly independent vectors, and the same proof applies regardless of the number of vertices in the faces of $\mathcal P$. This shows that if the linear independence requirement is dropped and the Menelaus condition is replaced by its algebraic version~\eqref{eqstar}, then theorems corresponding to subdivisions of positive-genus surfaces fail over non-commutative rings in all dimensions.
\end{remark}

\section{Incidence theorems from quadrilateral tilings}\label{sec:tile}
We now turn to a related, though not equivalent (see Proposition \ref{CORR} below), framework for encoding incidence theorems: the language of tilings introduced by Fomin and Pylyavskyy~\cite{fomin2023incidences}.

\begin{definition}
A \emph{quadrilateral tiling} of a surface is a polygonal subdivision in which all faces are quadrilaterals and the vertices are colored black and white in such a way that every edge joins vertices of different colors. The faces of a tiling are called \emph{tiles}.
\end{definition}

In what follows, we will only consider quadrilateral tilings; they will be referred to as just \emph{tilings}.


\begin{definition}\label{def:realization}
A \emph{realization} of a tiling $\mathcal T$ in $\P^n(\D)$ is an assignment of a point in $\P^n(\D)$ to each black vertex of $\mathcal T$ and a hyperplane in $\P^n(\D)$ to each white vertex, such that for every edge of $\mathcal T$ the point and the hyperplane assigned to its endpoints are not incident.
\end{definition}

\begin{definition}\label{def:coh}
Consider two points $A_1,A_2$ and two hyperplanes $\ell_1, \ell_2$ in the projective space $\P^n(\D)$. Assume that neither point lies on either hyperplane. The quadruple $A_1, A_2, \ell_1, \ell_2$ is said to be \emph{coherent} if either $A_1 = A_2$ or $\ell_1 = \ell_2$, or else the line $A_1A_2$ and the codimension 2 subspace $\ell_1 \cap \ell_2$
have a nonempty intersection. Accordingly, given a realization of a tiling, a tile is said to be \emph{coherent} if the points $A_1,A_2$ assigned to its black vertices and the hyperplanes $\ell_1,\ell_2$ assigned to its white vertices form a coherent quadruple.
\end{definition}

The coherence condition in $\P^2$ is depicted in Figure~\ref{fig:quad}. Here we assume that $A_1 \neq A_2$ and~$\ell_1 \neq \ell_2$.

  \begin{figure}[t]
\centering
  \centering
\includegraphics[scale = 0.4]{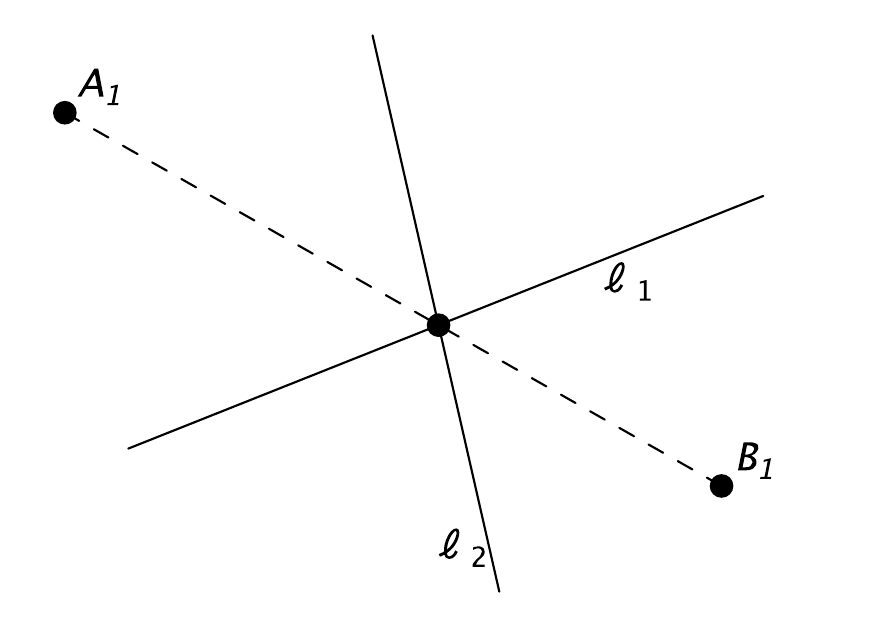}
%
%
%
%
%
%

\caption{The coherence condition in $\P^2$.
}\label{fig:quad}
\end{figure}

\begin{definition}\label{RGT2}
The \emph{incidence theorem in $\P^n(\D)$ associated with a tiling $\mathcal T$} is the following statement: for any realization of $\mathcal T$ in $\P^n(\D)$, if all tiles except one are coherent, then the remaining tile is also coherent.
\end{definition}


Theorem~2.6 of \cite{fomin2023incidences}, also called the \emph{master theorem}, asserts that for a tiling of a closed, connected, orientable surface, the associated incidence theorem holds in any dimension over any field. For extensions to arbitrary commutative rings, see also \cite{kuhne2025absolute}.






\begin{example}\label{ex:dfromtiling}
Consider a tiling of the sphere with the combinatorial structure of a cube. By assigning points and lines to its vertices as shown in Figure~\ref{fig:dsrgt}, we obtain a realization of this tiling in $\P^2$. Suppose that all tiles except the one with vertex labels $A_1, A_2C_2, C_1, \ell$ are coherent. Assume further that the points $A_1, B_1, C_1, O$ are pairwise distinct and that the lines $A_2B_2, B_2C_2, A_2C_2, \ell$ are also pairwise distinct. Under these assumptions, the assertion that the tile $A_1 b_2 C_1 \ell$ is coherent is equivalent to Desargues' theorem applied to the triangles $A_1B_1C_1$ and $A_2B_2C_2$; see \cite[Theorem~3.1]{fomin2023incidences}.
\end{example}

\begin{figure}[t]
\centering

\begin{subfigure}[t]{0.4\textwidth}
\centering
\begin{tikzpicture}[scale=1.7]
  \node (A1)     at (0,0) {\scriptsize$A_1$};
  \node (L)      at (1,0) {\scriptsize$\ell$};
  \node (B1)     at (1,1) {\scriptsize$B_1$};
  \node (A2B2)   at (0,1) {\scriptsize$A_2B_2$};
  \node (A2C2)   at (0.5,0.5) {\scriptsize$A_2C_2$};
  \node (C1)     at (1.5,0.5) {\scriptsize$C_1$};
  \node (B2C2)   at (1.5,1.5) {\scriptsize$B_2C_2$};
  \node (O)      at (0.5,1.5) {\scriptsize$O$};
  \draw (A1) -- (L) -- (B1) -- (A2B2) -- (A1);
  \draw (A2C2) -- (C1) -- (B2C2) -- (O) -- (A2C2);
  \draw (A1)   -- (A2C2);
  \draw (L)    -- (C1);
  \draw (B1)   -- (B2C2);
  \draw (A2B2) -- (O);
\end{tikzpicture}
\caption{}\label{fig:dsrgt}
\end{subfigure}
\begin{subfigure}[t]{0.4\textwidth}
\centering
\begin{tikzpicture}[xscale=0.45,yscale=0.7,
    point/.style={circle, draw, minimum size=2mm, inner sep=0pt},
    line/.style={circle, draw, fill=black, minimum size=2mm, inner sep=0pt}
]

\node[line] (n11) at (5,5) {};
\node[line] (n21) at (2,4) {};
\node[point] (n22) at (4,4) {};
\node[point] (n23) at (6,4) {};
\node[line] (n24) at (8,4) {};
\node[point] (n31) at (3,3) {};
\node[line] (n32) at (5,3) {}; 
\node[point] (n33) at (7,3) {};
\node[line] (n41) at (2,2) {};
\node[point] (n42) at (4,2) {};
\node[point] (n43) at (6,2) {};
\node[line] (n44) at (8,2) {};
\node[line] (n51) at (5,1) {};

\draw[opacity=0.4] (n11) -- (n21);
\draw[opacity=0.4] (n11) -- (n24);
\draw[opacity=0.4] (n21) -- (n41);
\draw[opacity=0.4] (n24) -- (n44);
\draw[opacity=0.4] (n41) -- (n51);
\draw[opacity=0.4] (n44) -- (n51);

\draw (n22) -- (n11);
\draw (n22) -- (n21);
\draw (n22) -- (n32);
\draw (n23) -- (n11);
\draw (n23) -- (n24);
\draw (n23) -- (n32);
\draw (n31) -- (n21);
\draw (n31) -- (n32);
\draw (n31) -- (n41);
\draw (n42) -- (n32);
\draw (n42) -- (n41);
\draw (n42) -- (n51);
\draw (n43) -- (n32);
\draw (n43) -- (n44);
\draw (n43) -- (n51);
\draw (n33) -- (n32);
\draw (n33) -- (n24);
\draw (n33) -- (n44);

\draw[opacity=0.4] (n32) -- (n11);
\draw[opacity=0.4] (n32) -- (n21);
\draw[opacity=0.4] (n32) -- (n24);
\draw[opacity=0.4] (n32) -- (n41);
\draw[opacity=0.4] (n32) -- (n51);
\draw[opacity=0.4] (n32) -- (n44);

\end{tikzpicture}
\caption{}\label{fig:pappust}
\end{subfigure}

\caption{Tilings giving rise to classical incidence theorems: (a) the cube and Desargues’ theorem; (b) a tiling of the torus corresponding to Pappus’ theorem. In (b), opposite sides of the displayed hexagonal domain are identified; grey lines indicate the edges of the corresponding triangulation.}
\label{fig:dsrgt-pappus2}
\end{figure}
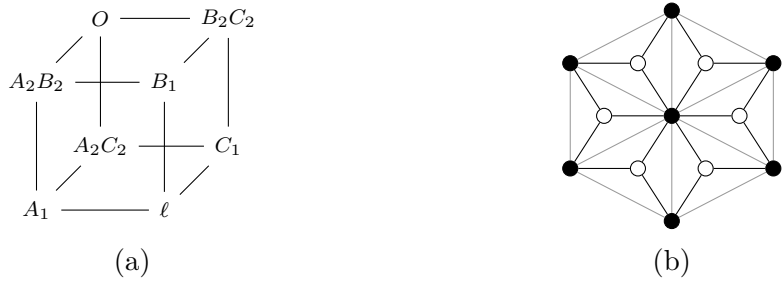
Our aim is to determine which incidence theorems associated with tilings hold over non-commutative division rings.  First, let us explain the relation between incidence theorems associated with tilings and those arising from polygonal subdivisions. 
\begin{definition}
A tiling is \emph{simple} if no two edges are incident to the same pair of vertices.
\end{definition}
\begin{proposition}
 Simple tilings, considered up to isotopy, are in one-to-one correspondence with polygonal subdivisions, also considered up to isotopy. 
 \end{proposition}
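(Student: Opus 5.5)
The plan is to exhibit explicit constructions in both directions and check that they are mutually inverse. The model case is Figure~\ref{fig:pappust}, where the grey triangulation is recovered from the black-and-white tiling.

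\emph{From a subdivision to a tiling.} Given a polygonal subdivision $\mathcal P$ of $\Sigma$, I would form the tiling $\mathcal T(\mathcal P)$ as follows. Its black vertices are the vertices of $\mathcal P$, and its white vertices are the faces of $\mathcal P$, with one white point placed in the interior of each face. For every incidence of a vertex $v$ with a face $F$, I would join $v$ to the white point of $F$ by an arc inside $F$. The tiles are then in bijection with the edges of $\mathcal P$: an edge $e$ with endpoints $v,w$, separating faces $F,F'$, gives the quadrilateral $v\,F\,w\,F'$, which contains $e$ as a diagonal. Some checks are needed here.
\begin{enumerate}
\item Because each face closure is an embedded polygon, its vertices are distinct. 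Hence there is exactly one arc from $F$ to each of its vertices, and the tiling is simple.
\item Each region cut out by these arcs is a disk with four sides, alternately colored. This uses the fact that $F\neq F'$. An edge with the same face on both sides would contradict embeddedness of the face closure, since that edge would appear twice on the boundary of one face.
\item The result is a cellular decomposition, and the tile closures are embedded quadrilaterals. This holds because $v\neq w$ and $F\neq F'$.
\end{enumerate}

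\emph{From a tiling to a subdivision.} Conversely, given a simple tiling $\mathcal T$, I would take the black vertices as the vertices of $\mathcal P(\mathcal T)$. In each tile I would draw the diagonal joining its two black vertices, and these diagonals are the edges. The faces of $\mathcal P(\mathcal T)$ correspond to the white vertices: around a white vertex $u$ of degree $d$, the $d$ tiles incident to $u$ form a disk (the star of $u$). The $d$ black diagonals of these tiles bound this disk, giving a $d$-gon with $u$ in its interior.

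\emph{Main obstacle.} The key point is that this $d$-gon is \emph{embedded}, i.e.\ that its $d$ boundary vertices are distinct black vertices. This is exactly where simplicity enters: a repeated black neighbour of $u$ would give two edges joining the same pair of vertices. One more check is $d\ge 2$. The degenerate case $d=1$ cannot occur: a white vertex of degree one would make a tile degenerate. I also need to verify that the stars of the white vertices cover $\Sigma$ and overlap only along the diagonals, so that the result is a cellular decomposition. This follows because every tile has exactly two white corners and is split by its diagonal into two triangles, one lying in each white star.

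\emph{Mutually inverse.} Finally, the two constructions are inverse up to isotopy. Starting from $\mathcal P$, the diagonals of $\mathcal T(\mathcal P)$ are isotopic to the original edges of $\mathcal P$. Starting from $\mathcal T$, the arcs from each white point to its black neighbours recover the edges of $\mathcal T$ up to isotopy within each white star.
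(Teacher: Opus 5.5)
Your proposal is correct and uses the same correspondence as the paper: black vertices $\leftrightarrow$ vertices of $\mathcal P$, white vertices $\leftrightarrow$ faces, tiling edges $\leftrightarrow$ vertex--face incidences, and hence tiles $\leftrightarrow$ edges of $\mathcal P$. You also fill in the embeddedness and simplicity checks that the paper leaves implicit. One wording fix: the star of a white vertex $u$ should be the union of the half-tiles on $u$'s side of the black diagonals, not the union of the full tiles incident to $u$. For instance, when the sphere is cut into two digons, the two tiles at a white vertex cover the whole sphere. The half-tile version is what your later covering argument actually uses.
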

 \begin{proof}
The correspondence is as follows: black vertices of a tiling correspond to the vertices of the associated subdivision, while white vertices correspond to faces. A black vertex and a white vertex are connected by an edge whenever the corresponding vertex and face are incident. See \cite[Definition~6.1]{fomin2023incidences} and Figure~\ref{fig:dsrgt-pappus2}, which illustrates this correspondence for the triangulation giving rise to Pappus' theorem. The same construction can be applied to the triangulation corresponding to Desargues' theorem: starting with the tetrahedral triangulation of the sphere from Figure~\ref{fig:destr}, we obtain the cube shown in Figure~\ref{fig:dsrgt}.
%
%
%
%
%
%
%
%
%
%
%
%
 \end{proof}



\begin{definition}
A \emph{generic realization} of a tiling $\mathcal T$ in $\P^n(\D)$ is an assignment of a point of $\P^n(\D)$ to each black vertex of $\mathcal T$ and a hyperplane of $\P^n(\D)$ to each white vertex, such that
\begin{itemize} \item for every edge of $\mathcal T$ the point and the hyperplane assigned to its endpoints are not incident;
\item the points associated to the neighbors of every white vertex are linearly independent.
\end{itemize}
\end{definition}

\begin{definition}\label{weakThm}
The \emph{weak incidence theorem in $\P^n(\D)$ associated with a tiling $\mathcal T$} is the following statement: for any generic realization of $\mathcal T$ in $\P^n(\D)$, if all tiles except one are coherent, then the remaining tile is also coherent.
\end{definition}

\begin{proposition}\label{CORR}
Consider a simple tiling $\mathcal T$ and the associated polygonal subdivision $\mathcal P$. Then, in any projective space $\P^n(\D)$, the weak theorem associated with $\mathcal T$ is equivalent to the theorem associated with $\mathcal P$. 
\end{proposition}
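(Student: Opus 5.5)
We will build a dictionary between generic realizations of $\mathcal T$ and realizations of $\mathcal P$ that preserves the relevant conditions face by face. Under the correspondence, black vertices of $\mathcal T$ are vertices of $\mathcal P$, white vertices are faces of $\mathcal P$, and each tile of $\mathcal T$ corresponds to an edge $e$ of $\mathcal P$ together with its two adjacent faces $F_1,F_2$. Concretely, the tile has black vertices $v,w$ (the endpoints of $e$) and white vertices $F_1,F_2$. Simplicity of $\mathcal T$ ensures that this is a genuine bijection between tiles and edges, with the two white vertices of a tile distinct and the two black vertices distinct.

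\textbf{From $\mathcal T$ to $\mathcal P$.} Take a generic realization: points $A_v$ and hyperplanes $\ell_F$. Assume all tiles except one are coherent. For a coherent tile at edge $e=vw$, genericity gives $A_v\neq A_w$, since they are neighbors of the white vertex $F_1$ and hence linearly independent. Coherence then produces a point $B_e$ on the line $A_vA_w$ lying in $\ell_{F_1}\cap\ell_{F_2}$. This point is distinct from $A_v$ and $A_w$, because neither of these lies on $\ell_{F_1}$. If $\ell_{F_1}=\ell_{F_2}$, we instead let $B_e=A_vA_w\cap \ell_{F_1}$, which is a single point because $A_v\notin\ell_{F_1}$.

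In either case, for each face $F$ and each edge $e\subset F$, set $B_e^F:=A_vA_w\cap\ell_F$. This is well defined for every edge, including the exceptional one. The conditions matching the realization axioms now hold: the vertices of each face are linearly independent by genericity, and $B^F_e\neq A_v,A_w$. Coherence of the tile at $e$ is equivalent to $B_e^{F_1}=B_e^{F_2}$.

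The data $(A_v, B_e)$ then form a realization of $\mathcal P$ satisfying the Menelaus condition at every face $F$ none of whose edges is the exceptional one. Indeed, all the $B_e$ on such a face lie in $\ell_F\cap\langle A_v\rangle$, which is a proper subspace of the span of the face vertices, so the $B_e$ are dependent.

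\textbf{From $\mathcal P$ to $\mathcal T$.} Conversely, given a realization of $\mathcal P$, use Proposition~\ref{prop:Men}: at a face $F$ where Menelaus holds, the points $B_e$, $e\subset F$, span a hyperplane of $\langle A_v: v\in F\rangle$ avoiding all $A_v$. Extend this hyperplane to a hyperplane $\ell_F$ of $\P^n$ not containing any $A_v$; for example, take it as the kernel of the functional $\Bell$ extended by a suitable complement. Then every tile is coherent, since $B_e\in A_vA_w\cap\ell_{F_1}\cap\ell_{F_2}$.

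\textbf{The main obstacle} is the bookkeeping around the exceptional face or tile, since in each direction one of the two non-incident objects is not determined by the construction.

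For the direction ``theorem for $\mathcal P$ $\Rightarrow$ weak theorem for $\mathcal T$'', proceed as follows. If the non-coherent-candidate tile is at edge $e_0$ between faces $F_1,F_2$, define $B_{e_0}:=B^{F_1}_{e_0}$. Then Menelaus holds at every face except possibly $F_2$, so the theorem for $\mathcal P$ gives Menelaus at $F_2$. By Proposition~\ref{prop:Men}(1), the hyperplane in $\langle A_v:v\in F_2\rangle$ spanned by the other $B_e$ is $\ell_{F_2}\cap\langle A_v\rangle$, so it contains $B_{e_0}$. Hence $B^{F_1}_{e_0}=B^{F_2}_{e_0}$, which is coherence.

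For the converse direction, suppose the exceptional face $F_0$ of $\mathcal P$ fails to be Menelaus. Choose $\ell_{F_0}$ to be any hyperplane through $B_{e_0}$ that avoids all relevant $A_v$, for a fixed edge $e_0\subset F_0$. Such a hyperplane exists by a generic choice, using Lemma~\ref{lem:finite_union_submodules} in the infinite case; the finite case is commutative and is covered separately by Proposition~\ref{P3} together with the master theorem. Then all tiles are coherent except possibly those on the other edges of $F_0$. To reduce to a single exceptional tile, handle the general position by a small trick: apply the weak theorem iteratively, or better, choose $\ell_{F_0}$ through $B_{e_0}$ and observe that the weak theorem for $\mathcal T$ applied successively yields $B_e\in\ell_{F_0}$ for all $e\subset F_0$. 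Making this step rigorous is where I expect most of the care to be needed.
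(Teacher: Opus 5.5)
Your direction ``theorem for $\mathcal P$ $\Rightarrow$ weak theorem for $\mathcal T$'' is correct and is the paper's argument. You set $B_{e_0}:=B^{F_1}_{e_0}$, obtain Menelaus at every face except possibly $F_2$, and use Proposition~\ref{prop:Men}(1) to get $B^{F_1}_{e_0}\in\ell_{F_2}$, which is coherence.

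The converse direction has a genuine gap, exactly at the step you flagged. If $\ell_{F_0}$ is only required to pass through one edge-point $B_{e_0}$, then up to $m-1$ tiles may be non-coherent, namely those at the remaining edges of the $m$-gon $F_0$. The weak theorem needs all tiles but one to be coherent, so it cannot be applied. ``Applying it successively'' does not help, because no intermediate configuration ever has a single non-coherent tile.

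The missing observation is that parts (1) and (2) of Proposition~\ref{prop:Men} need no Menelaus hypothesis. For any face $F_0$ with vertex-points $A_1,\dots,A_m$ and edge-points $B_1,\dots,B_m$, the points $B_1,\dots,B_{m-1}$ span a hyperplane $H$ of $\langle A_1,\dots,A_m\rangle$ containing no $A_i$. Extend $H$ to a hyperplane $\ell_{F_0}$ of $\P^n(\D)$ with $\ell_{F_0}\cap\langle A_1,\dots,A_m\rangle=H$, by the same extension of $\Bell$ to a complement that you already use.

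Together with your hyperplanes for the other faces, this gives a generic realization of $\mathcal T$, since the neighbours of $v_F$ are exactly the vertices of $F$. In it, every tile is coherent except possibly the one at the edge carrying $B_m$. The weak theorem makes that tile coherent. Since $B_m$ is the unique point of the line $A_mA_1$ lying on $\ell_{F'}$, where $F'$ is the other face at that edge, coherence forces $B_m\in\ell_{F_0}$. Hence $B_m\in H$, which is the Menelaus condition at $F_0$. No argument by contradiction is needed.

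Your appeal to Lemma~\ref{lem:finite_union_submodules}, Wedderburn, Proposition~\ref{P3} and the master theorem is also unnecessary. Every hyperplane needed comes from extending a functional off a subspace, which works verbatim over any division ring, finite or not. Invoking the master theorem is logically admissible, but it imports a deep external result into what is a purely linear-algebraic equivalence.
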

\begin{remark}
One can also replace the notion of a generic realization with its dual, where the \emph{hyperplanes} associated to the neighbors of every black vertex are linearly independent. The associated dual weak theorem is then equivalent to the  theorem corresponding to the dual subdivision of $\mathcal P$.
\end{remark}
\begin{remark}
Proposition~\ref{CORR} in particular shows that any theorem associated with a tiling is stronger than the theorem associated with the corresponding subdivision. The relation between the classes of theorems associated with tilings and subdivisions is discussed in \cite[Section~4.1]{katzenberger2026manifold}.
\end{remark}
\begin{proof}[\bf Proof of Proposition \ref{CORR}]
Assume that the weak theorem in ~$\P^n(\D)$ associated with $\mathcal T$ holds. Consider a realization of $\mathcal P$ in $\P^n(\D)$ such that the Menelaus condition holds on all faces except possibly $F_0$. We show that the Menelaus condition also holds on $F_0$ by upgrading the given realization of $\mathcal P$ to a realization of $\mathcal T$ and then using the associated weak theorem.

Note that the black vertices of $\mathcal T$ are also vertices of $\mathcal P$ and thus already carry associated points in~$\P^n(\D)$. So, it remains to assign hyperplanes to the white vertices. Let $v_F$ be a white vertex corresponding to a face $F$ of $\mathcal P$.  If $F \neq F_0$, then the Menelaus condition implies that the edge-points of $F$ are linearly dependent. Moreover, by Proposition~\ref{prop:Men}, they span a hyperplane inside the span of the vertex-points, and that hyperplane contains none of the vertex-points. Therefore, there exists a hyperplane \emph{in the ambient space $\P^n(\D)$} containing all the edge-points of $F$ and none of the vertex-points. Assign this hyperplane to $v_F$.  Now, consider the case $F = F_0$. Let $A_1,\dots,A_m$ be the vertex-points and $B_1,\dots,B_m$ the edge-points of $F_0$. Again, by Proposition~\ref{prop:Men}, there exists a hyperplane containing $B_1,\dots,B_{m-1}$ and none of the $A_i$. Assign this hyperplane to $v_{F_0}$.  

With hyperplanes assigned to all white vertices, we obtain a generic realization of $\mathcal T$ in which all tiles are coherent except possibly the one corresponding to the edge of $\mathcal P$ whose corresponding point is $B_m$. By the weak theorem associated with $\mathcal T$, that last tile is also coherent. Hence, $B_m$ lies in the span of $B_1,\dots,B_{m-1}$, and the Menelaus condition holds on~$F_0$.

Conversely, assume that the theorem associated with $\mathcal P$ holds in $\P^n(\D)$. Consider a generic realization of $\mathcal T$ in $\P^n(\D)$ in which all tiles except $T_0$ are coherent. We upgrade it to a realization of $\mathcal P$ by assigning a point in $\P^n(\D)$ to each edge of $\mathcal P$, using the fact that these edges correspond to the tiles of $\mathcal T$.  

Consider a tile $T$, and let $A_1, A_2$ and $\ell_1, \ell_2$ be, respectively, the points and hyperplanes assigned to its vertices. If $T \neq T_0$, then by coherence there exists a unique point lying on the line $A_1A_2$ and on both hyperplanes $\ell_1$ and $\ell_2$. Assign this point to the corresponding edge of~$\mathcal P$.  If $T = T_0$, there is still a unique point lying on $A_1A_2$ and on $\ell_1$. Assign this point to the corresponding edge of $\mathcal P$.  

This construction yields a realization of $\mathcal P$ in which the Menelaus condition holds on all faces except possibly the one corresponding to the white vertex of $T_0$ whose associated hyperplane is $\ell_2$. Therefore, the Menelaus condition must hold on that face as well, which implies the coherence of $T_0$.
\end{proof}


%
%
%


Now, Theorem~\ref{mainThmPS} provides a characterization of tilings for which the associated weak theorem holds over division rings. In particular, this includes tilings of the sphere. Let us show that, in this setting, the corresponding strong theorem also holds.

\begin{theorem}\label{ST}
For any tiling of a sphere, the corresponding theorem in $\P^n(\D)$ holds for any division ring $\D$ and dimension $n$. 
\end{theorem}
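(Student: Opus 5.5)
The plan is to repeat the holonomy argument used for Proposition~\ref{P2}, this time with a $\D^\times$-connection built directly on the edges of the tiling $\mathcal T$. Coherence of a tile should become equivalent to triviality of the holonomy around it, and Corollary~\ref{sphereCor} then finishes the proof.

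\emph{Step 1: the connection.} Identify hyperplanes in $\P^n(\D)$ with nonzero homomorphisms of left $\D$-modules $\Bell\colon \D^{n+1}\to\D$, up to right multiplication by $\D^\times$. Lift each black point to a vector $\mathbf A\in\D^{n+1}$ and each white hyperplane to such an $\Bell$. For an edge oriented from a black vertex to a white vertex, set $\phi(e):=\Bell(\mathbf A)$, and set $\phi(\bar e):=\Bell(\mathbf A)^{-1}$. The realization condition (no incidences along edges) gives $\Bell(\mathbf A)\in\D^\times$, so $\phi$ is a $\D^\times$-connection. Changing lifts acts by gauge transformations, since $(\Bell\mu)(\lambda\mathbf A)=\lambda\,\Bell(\mathbf A)\,\mu$. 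This matches the gauge rule with $\psi=\lambda$ at black vertices and $\psi=\mu^{-1}$ at white vertices, so triviality of holonomy around a face does not depend on the lifts.

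\emph{Step 2: coherence $\Leftrightarrow$ trivial holonomy.} Take a tile with black vertices $A_1,A_2$ and white vertices $\ell_1,\ell_2$, traversed in the order $A_1\to\ell_1\to A_2\to\ell_2\to A_1$. Its holonomy is
\[
\Bell_1(\mathbf A_1)\,\Bell_1(\mathbf A_2)^{-1}\,\Bell_2(\mathbf A_2)\,\Bell_2(\mathbf A_1)^{-1}.
\]
First suppose $A_1\neq A_2$ and $\ell_1\neq\ell_2$. The line $A_1A_2$ meets $\ell_1\cap\ell_2$ if and only if there is $(x,y)\neq(0,0)$ with
\[
x\Bell_1(\mathbf A_1)+y\Bell_1(\mathbf A_2)=0,\qquad x\Bell_2(\mathbf A_1)+y\Bell_2(\mathbf A_2)=0.
\]
All four pairings are nonzero, so $x\neq0$ and the first equation forces $y=-x\Bell_1(\mathbf A_1)\Bell_1(\mathbf A_2)^{-1}$. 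Substituting into the second equation and cancelling $x$ on the left shows that the system is solvable exactly when the holonomy above equals $1$.

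In the degenerate case $\mathbf A_2=c\mathbf A_1$, the holonomy telescopes to $\Bell_1(\mathbf A_1)\Bell_1(\mathbf A_1)^{-1}c^{-1}c\,\Bell_2(\mathbf A_1)\Bell_2(\mathbf A_1)^{-1}=1$. The case $\ell_2=\ell_1\mu$ is handled the same way. So degenerate tiles are coherent and also have trivial holonomy, and the equivalence holds in all cases.

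\emph{Step 3: conclusion.} A tiling of the sphere is a map on $S^2$. If all tiles except one are coherent, then by Step~2 the holonomy of $\phi$ is trivial around all faces but one. Corollary~\ref{sphereCor} then gives trivial holonomy around the last face, and Step~2 again shows that the last tile is coherent.

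The main obstacle is Step~2. The noncommutative bookkeeping has to be done carefully: scalars on points act on the left, scalars on hyperplanes act on the right, and the order of factors around the tile must be consistent with the gauge rule. The degenerate cases also need checking, because the strong theorem, unlike the weak one, allows coincident points and coincident hyperplanes. Unlike Proposition~\ref{P4}, no realizability of connections is needed here, since only the implication ``flat on all faces but one $\Rightarrow$ flat'' is used.
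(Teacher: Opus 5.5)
Your proposal is correct and follows essentially the same route as the paper: the same connection $\phi(e)=\Bell_e(\mathbf A_e)$ on black-to-white edges with rescalings acting as gauge transformations, the equivalence of coherence with trivial holonomy around a tile, and then Corollary~\ref{sphereCor}. The only difference is that the paper gets the coherence--holonomy equivalence by citing Proposition~\ref{propCoh} (non-invertibility of the $2\times 2$ pairing matrix), whereas you rederive it directly and check the degenerate cases $A_1=A_2$ and $\ell_1=\ell_2$ explicitly.
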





We start with an algebraic characterization of coherence over division rings:

\begin{proposition}\label{propCoh}
Consider two points $A_1,A_2$ and two hyperplanes $\ell_1, \ell_2$ in the projective space $\P^n(\D)$. Assume that neither point lies on either hyperplane. Let $\mathbf A_1, \mathbf A_2 \in \D^{n+1}$ be vectors representing the points $A_1$ and $A_2$, and let $\Bell_1, \Bell_2 \in (\D^{n+1})^*$ be covectors representing the hyperplanes $\ell_1$ and $\ell_2$. Then the following conditions are equivalent:
\begin{enumerate}
\item The quadruple $(A_1,A_2,\ell_1,\ell_2)$ is coherent in the sense of Definition~\ref{def:coh}.
\item We have
\begin{equation}\label{eq2st}
\Bell_1(\mathbf A_1)\,\Bell_1(\mathbf A_2)^{-1}
=
\Bell_2(\mathbf A_1)\,\Bell_2(\mathbf A_2)^{-1}.
\end{equation}
\end{enumerate}
\end{proposition}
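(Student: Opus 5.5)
We set up conventions first, then reduce everything to one linear equation. Points are left $\D$-submodules of $\D^{n+1}$ of rank one. Hyperplanes are kernels of left-module homomorphisms $\Bell\colon \D^{n+1}\to\D$. Such an $\Bell$ is determined up to right multiplication: $\Bell \mapsto \Bell\mu$, meaning $\Bell\mu(\mathbf v) = \Bell(\mathbf v)\mu$. A vector lift scales on the left: $\mathbf A \mapsto \lambda\mathbf A$.

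The first step is to check that \eqref{eq2st} is well defined. Under $\mathbf A_i \mapsto \lambda_i\mathbf A_i$, both sides become $\lambda_1(\cdot)\lambda_2^{-1}$. Under $\Bell_j\mapsto\Bell_j\mu_j$, we get $\Bell_j(\mathbf A_1)\mu_j\mu_j^{-1}\Bell_j(\mathbf A_2)^{-1}$, which is unchanged. All the inverses exist because no point lies on either hyperplane. This independence of lifts is what allows us to normalize freely later.

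Next we dispose of the degenerate cases. If $A_1=A_2$, take $\mathbf A_2=\mathbf A_1$; both sides of \eqref{eq2st} equal $1$. If $\ell_1=\ell_2$, take $\Bell_2=\Bell_1$; the two sides are then literally identical. So in these cases both conditions hold. For the converse in the degenerate setting, nothing extra is needed: once we show that (2) implies (1) in the generic case, the degenerate cases satisfy (1) by definition anyway.

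The main case is $A_1\ne A_2$ and $\ell_1\ne\ell_2$. Here coherence means there is a point $P$ on the line $A_1A_2$ lying in $\ell_1\cap\ell_2$. Any point of the line has a lift $\mathbf P=a\mathbf A_1+b\mathbf A_2$ with $(a,b)\neq(0,0)$. The conditions $P\in\ell_1\cap\ell_2$ read
\[
a\,\Bell_j(\mathbf A_1)+b\,\Bell_j(\mathbf A_2)=0,\qquad j=1,2 .
\]
If $a=0$, then $b\,\Bell_j(\mathbf A_2)=0$ forces $b=0$; so $a\neq0$, and likewise $b\neq0$. Hence we may normalize $a=1$. Each equation then gives
\[
b=-\Bell_j(\mathbf A_1)\Bell_j(\mathbf A_2)^{-1}, \qquad j=1,2 .
\]
A common solution $b$ exists exactly when the two values agree, which is \eqref{eq2st}.

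Conversely, assuming \eqref{eq2st}, set $b$ to this common value and $\mathbf P=\mathbf A_1+b\mathbf A_2$. This vector is nonzero because $\mathbf A_1,\mathbf A_2$ are linearly independent. It therefore defines a point on $A_1A_2$ lying on both hyperplanes, which is coherence.

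The only delicate point is noncommutativity. Scalars must act on the left of vectors and covectors must be right-scalable, so that $\Bell(a\mathbf A)=a\,\Bell(\mathbf A)$ places $a$ on the left. This is why the ratio in \eqref{eq2st} has the inverse on the right. Keeping these sides consistent is the main thing to verify; no other obstacle is expected.
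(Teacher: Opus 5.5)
Your proof is correct and is essentially the paper's argument. The paper notes that coherence amounts to the $2\times 2$ pairing matrix $\bigl(\Bell_j(\mathbf A_i)\bigr)$ being non-invertible, which is equivalent to \eqref{eq2st}; solving $a\,\Bell_j(\mathbf A_1)+b\,\Bell_j(\mathbf A_2)=0$ by hand is that non-invertibility check made explicit, and your treatment of the degenerate cases and the independence of lifts fills in details the paper leaves implicit.
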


\begin{proof}
The quadruple $A_1,A_2,\ell_1,\ell_2$ is coherent if and only if the linear functions $\Bell_1, \Bell_2$ are dependent on the submodule spanned by the vectors $\mathbf A_1, \mathbf A_2$. Equivalently, the matrix
\[
\begin{pmatrix}
\Bell_1(\mathbf A_1) & \Bell_2(\mathbf A_1) \\
\Bell_1(\mathbf A_2) & \Bell_2(\mathbf A_2)
\end{pmatrix}
\]
is non-invertible. This condition is precisely equivalent to \eqref{eq2st}.
\end{proof}


\begin{proof}[\bf Proof of Theorem \ref{ST}]
As we did with polygonal subdivisions in Section~\ref{proofSec}, given a tiling $\mathcal T$ and a realization of that tiling in $\P^n(\D)$, we construct a $\D^{\times}$-connection on the $1$-skeleton of $\mathcal T$. Lift all points and hyperplanes associated with vertices of $\mathcal T$ to, respectively, vectors and covectors in $\D^{n+1}$. Let $e$ be an edge of the tiling, oriented \emph{from black to white}, and let $\mathbf A_e$ and $\Bell_e$ denote the vector and covector associated with its endpoints. We then define a $\D^{\times}$-valued connection $\phi$ on edges by
\[
\phi(e) := \Bell_e(\mathbf A_e).
\]
It is straightforward to check that rescaling the vectors and covectors assigned to vertices changes $\phi$ by a gauge transformation. Hence, each realization of a tiling $\mathcal T$ gives rise to a connection on its $1$-skeleton that is well-defined up to gauge equivalence. Furthermore, by Proposition~\ref{propCoh}, a tile is coherent if and only if the holonomy of the connection around that tile is trivial.

Now, the incidence theorem in $\P^n(\D)$ associated with $\mathcal T$ can be reformulated as follows: for every $\D^{\times}$-connection on the $1$-skeleton of $\mathcal T$ that comes from a realization in $\P^n(\D)$, if the holonomy around all but one tile is trivial, then the holonomy around the remaining tile is also trivial. For a tiling $\mathcal T$ of the sphere, this holds for any connection by Corollary~\ref{sphereCor}, so the associated theorem in $\P^n(\D)$ is always true for such $\mathcal T$.
\end{proof}

\begin{example}
As we saw in Example \ref{ex:dfromtiling}, Desargues' theorem is associated with a tiling of the sphere. Therefore, it holds over any division ring $\D$. 
\end{example}


We now consider tilings of surfaces of positive genus. 
In that setting Theorem~\ref{mainThmPS} implies the following.

\begin{theorem}\label{HGT}
Let $\mathcal T$ be a simple tiling of a surface of positive genus, and let $\D$ be a division ring. Suppose that
\[
n \ge \max_{v \in W(\mathcal T)} \bigl( \deg v \bigr) - 1,
\]
where $W(\mathcal T)$ denotes the set of white vertices of $\mathcal T$. Then the incidence theorem corresponding to $\mathcal T$ holds in $\P^n(\D)$ if and only if $\D$ is commutative.
\end{theorem}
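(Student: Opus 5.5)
The plan is to combine three facts already established: the master theorem for the commutative direction, Proposition~\ref{CORR} to pass between tilings and subdivisions, and Proposition~\ref{P4} for the noncommutative counterexample.

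\emph{If $\D$ is commutative.} Then $\D$ is a field and the master theorem of \cite{fomin2023incidences} applies. Alternatively, one can argue directly. The connection $\phi(e)=\Bell_e(\mathbf A_e)$ built in the proof of Theorem~\ref{ST} takes values in the abelian group $\D^\times$. By Proposition~\ref{propCoh}, a tile is coherent exactly when its holonomy is trivial. Corollary~\ref{abCor} then shows that coherence of all tiles but one forces coherence of the last.

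\emph{If $\D$ is noncommutative.} We must exhibit a realization of $\mathcal T$ in which exactly one tile is not coherent. Let $\mathcal P$ be the polygonal subdivision corresponding to the simple tiling $\mathcal T$. Its surface is the same and has positive genus. The white vertices of $\mathcal T$ are the faces of $\mathcal P$, and the degree of a white vertex equals the number of vertices of that face. The hypothesis $n\ge \max_{v\in W(\mathcal T)}\deg v-1$ therefore says precisely that every face of $\mathcal P$ has at most $n+1$ vertices.

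Proposition~\ref{P4} now supplies a realization of $\mathcal P$ in $\P^n(\D)$ in which the Menelaus condition holds on every face except a chosen $F_0$, and fails on $F_0$. Hence the theorem associated with $\mathcal P$ fails. By Proposition~\ref{CORR}, the weak theorem associated with $\mathcal T$ also fails.

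Since every generic realization is a realization, a counterexample to the weak theorem is a counterexample to the full theorem associated with $\mathcal T$. So that theorem fails too.

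\emph{Main obstacle.} The delicate step is making sure the failure of Menelaus really yields a non-coherent tile, rather than just an abstract failure of the equivalence. The cleanest route is to rerun the first half of the proof of Proposition~\ref{CORR} concretely:
\begin{itemize}
\item to each face $F\neq F_0$, assign the hyperplane through its edge-points;
\item to $F_0$, assign a hyperplane through $B_1,\dots,B_{m-1}$ that avoids all the vertex-points.
\end{itemize}
Every tile is then coherent except the one corresponding to the edge carrying $B_m$. That tile is coherent if and only if $B_m$ lies in the hyperplane through $B_1,\dots,B_{m-1}$, which is exactly the Menelaus condition on $F_0$. Since Menelaus fails on $F_0$, this tile is not coherent.
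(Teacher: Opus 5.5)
Your proposal is correct and is essentially the paper's first proof of Theorem~\ref{HGT}. The commutative case comes from the master theorem, with your abelian-connection alternative via Corollary~\ref{abCor} also valid. The noncommutative case comes from the failing realization of Proposition~\ref{P4}, transferred to the tiling through the first half of Proposition~\ref{CORR}.

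One small imprecision in your concrete rerun: for a face with fewer than $n+1$ vertices, the edge-points do not span a hyperplane of $\P^n(\D)$. There you must choose an ambient hyperplane containing them and avoiding the vertex-points, exactly as in the proof of Proposition~\ref{CORR}.
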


\begin{remark}
By duality, Theorem \ref{HGT} also holds when the dimension $n$ is at least the maximum of degrees of \emph{black} vertices of $\mathcal T$ minus one.
%
\end{remark}

\begin{proof}[\bf First proof of Theorem \ref{HGT} (via polygonal subdivisions)]
If $\D$ is commutative, the incidence theorem in $\P^n(\D)$ corresponding to $\mathcal T$ holds by \cite[Theorem~2.6]{fomin2023incidences}. Thus, we may assume that $\D$ is non-commutative. Since each white vertex of $\mathcal T$ has degree at most $n+1$, the faces of the associated polygonal subdivision $\mathcal P$ contain at most $n+1$ vertices. It then follows from Theorem~\ref{mainThmPS} that the incidence theorem corresponding to $\mathcal P$ fails in $\P^n(\D)$. By Proposition~\ref{CORR}, this failure implies that the 
{weak} theorem associated with $\mathcal T$ also fails, and therefore, the {strong} theorem fails as well.\end{proof}

\begin{proof}[\bf Second proof of Theorem \ref{HGT} (direct construction)]
We give a construction that avoids polygonal subdivisions. This approach will also be useful in situations where the dimension $n$ does not satisfy the above inequality.  Assume that $\D$ is non-commutative. Fix a tile $T_0$ of $\mathcal T$ that we wish to make non-coherent. Then, by Corollary \ref{p2}, there exists a $\D^\times$-connection $\phi$ on the $1$-skeleton of $\mathcal T$ such that the holonomy around all tiles except $T_0$ is trivial, whereas
the holonomy around $T_0$ is non-trivial.

To construct a realization of $\mathcal T$ in $\P^n(\D)$ which gives rise to the connection $\phi$, take an infinite set of vectors in $\D^{\,n+1}$ such that any $n+1$ of them are linearly independent. Such a set exists by Corollary~\ref{corinf}, since non-commutative division rings are infinite. Assign distinct vectors from this set to the black vertices of $\mathcal T$. Since $\mathcal T$ is simple and each white vertex has degree at most $n+1$, the vectors assigned to the neighbors of each white vertex are linearly independent. This allows us to assign a covector to each white vertex so that, for each edge $e$ directed from a black vertex to a white vertex, we have
\[
\Bell_e(\mathbf A_e) = \phi(e),
\] 
where $\Bell_e$, $\mathbf A_e$ are the covector and vector assigned to the vertices incident to $e$. Projecting these vectors and covectors to points and hyperplanes in $\P^n(\D)$ yields a realization of $\mathcal T$ whose associated connection is $\phi$. By construction, all tiles except $T_0$ are coherent, while $T_0$ is non-coherent. Hence, the incidence theorem associated with $\mathcal T$ fails.
\end{proof}


\begin{example} 
Figure~\ref{fig:pappust} shows the tiling corresponding  to the triangulation of the torus  from Example~\ref{pappus}.  This gives a tiling formulation of Pappus' theorem, cf. \cite[first proof of Theorem~3.2]{fomin2023incidences}. Since all white vertices of this tiling have degree three and Pappus' theorem is planar, we again see that it holds over a division ring $\D$ if and only if $\D$ is a field.

\end{example}
\begin{example} Figure~\ref{fig:mobt2} shows the tiling corresponding to the quadrangulation of the torus  from Example~\ref{MDG} (the meaning of the edge labels will be explained later on).  This gives a tiling formulation of M\"obius' theorem, cf. \cite[proof of Theorem~5.5]{fomin2023incidences}. 
 Since all its white vertices have degree four, we conclude that the {M\"obius theorem} holds in a division ring $\D$ if and only if $\D$ is a field. 
\end{example}

%
%
%
%
%
%

\begin{figure}[t]
    \centering
    \begin{tikzpicture}[scale=0.9]
        \begin{scope}[xshift=0cm]
            \draw (0,0) grid (2,2);

            \foreach \x in {0,1,2}
                \foreach \y in {0,1,2}
                {
                    \pgfmathtruncatemacro{\parity}{mod(\x+\y,2)}
                    \ifnum\parity=0
                        \fill[black] (\x,\y) circle (3pt);
                    \else
                        \fill[white,draw=black] (\x,\y) circle (3pt);
                    \fi
                }

            \node[left] at (0,1.5) {\scriptsize $e$};
            \node[left] at (0,0.5) {\scriptsize $e'$};

            \node[right] at (2,1.5) {\scriptsize $e$};
            \node[right] at (2,0.5) {\scriptsize $e'$};
        \end{scope}

        \draw[->, thick] (3,1) -- (4,1);

        \begin{scope}[xshift=5cm, yshift=1cm]  
            \coordinate (H1) at (0,0);
            \coordinate (H2) at (1,0);
            \coordinate (H3) at (2,0);
            \coordinate (H4) at (3,0);
            \coordinate (H5) at (4,0);

            \coordinate (Vtop) at (2,1);
            \coordinate (Vbottom) at (2,-1);

            \draw (H1) -- (H2) -- (H3) -- (H4) -- (H5);

            \draw (H3) -- (Vtop);
            \draw (H3) -- (Vbottom);

            \draw (H1) -- (Vtop);
            \draw (H1) -- (Vbottom);
            \draw (H5) -- (Vtop);
            \draw (H5) -- (Vbottom);

            \fill[white,draw=black] (H1) circle (3pt); 
            \fill[black] (H2) circle (3pt);            
            \fill[white,draw=black] (H3) circle (3pt); 
            \fill[black] (H4) circle (3pt);            
            \fill[white,draw=black] (H5) circle (3pt); 

            \fill[black] (Vtop) circle (3pt);
            \fill[black] (Vbottom) circle (3pt);
        \end{scope}
    \end{tikzpicture}
    \caption{A non-simple tiling and its simplification. For the tiling on the left, the opposite sides of the square are identified to form a torus. For the tiling on the right, the top and bottom boundaries are identified to form a sphere.}
    \label{fig:tiling-comparison-edges-e}
\end{figure}
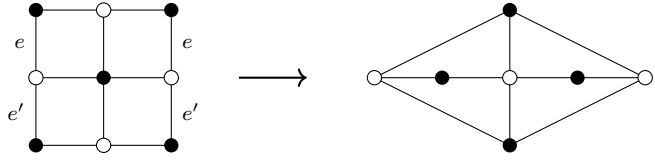
\begin{remark} 
Note that for a non-simple tiling of a surface of positive genus, the corresponding theorem may hold over non-commutative division rings in all dimensions. For example, this occurs for the torus tiling shown in the left panel of Figure~\ref{fig:tiling-comparison-edges-e}. Indeed, for this tiling, coherence of any tile is equivalent to coherence of any other tile (cf. \cite[Example 2.10]{fomin2023incidences}).

Furthermore, any non-simple tiling can be transformed into a simple one via repeated application of the following procedure. Suppose $e$ and $e'$ are edges connecting the same pair of vertices. Cut the surface along the loop formed by $e$ and $e'$, producing a new surface with two boundary components, each consisting of a copy of $e$ and a copy of $e'$. Gluing together the copies of $e$ and $e'$ lying on the same boundary component yields a new tiling of a closed surface. The incidence theorems corresponding to this new tiling imply those associated with the original tiling.

For instance, applying this procedure to the edges $e$ and $e'$ of the torus tiling in the left panel of Figure~\ref{fig:tiling-comparison-edges-e} produces the tiling of the sphere shown in the right panel. This again confirms that the theorem corresponding to the original tiling holds over all division rings and in all dimensions, despite the tiling being defined on a torus.\end{remark}

\section{Tiling theorems in low dimensions}

In the setting of polygonal subdivisions, Theorem \ref{mainThmPS} provides a complete classification of theorems that are valid over a division ring. For theorems corresponding to tilings, however, the situation is more intricate. Specifically, for a tiling $\mathcal T$ of a surface of positive genus, the corresponding theorem may or may not hold over non-commutative rings in dimensions
\[
n \; < \; \displaystyle \max\limits_{v \in W(\mathcal T)} \bigl( \deg v \bigr) - 1,
\]
where $W(\mathcal T)$ denotes the set of white vertices of $\mathcal T$.



\begin{figure}[b!]
    \centering
    \begin{tikzpicture}[scale = 1.4,
        x=1pt,y=1pt,
        whitevertex/.style={circle, draw, fill=white,minimum size=2mm, inner sep=0pt},
        blackvertex/.style={circle, fill, minimum size=2mm, inner sep=0pt}
    ]


    \node[blackvertex] (v1) at (60,0) {};
    \node[whitevertex] (v2) at (30,15) {};
    \node[whitevertex] (v3) at (90,15) {};
    \node[blackvertex] (v4) at (0,30) {};
    \node[blackvertex] (v5) at (120,30) {};
    \node[whitevertex] (v6) at (0,50) {};
    \node[whitevertex] (v7) at (40,50) {};
    \node[whitevertex] (v8) at (80,50) {};
    \node[whitevertex] (v9) at (120,50) {};
    \node[blackvertex] (v10) at (20,50) {};
    \node[blackvertex] (v11) at (100,50) {};
    \node[blackvertex] (v12) at (0,70) {};
    \node[blackvertex] (v13) at (120,70) {};
    \node[whitevertex] (v14) at (30,85) {};
    \node[whitevertex] (v15) at (90,85) {};
    \node[blackvertex] (v16) at (60,100) {};
    
      \fill[gray!20, opacity=0.5](v7.center) -- (v1.center) -- (v8.center) -- (v16.center) --  (v7.center) ;

    \node[blackvertex] (v1) at (60,0) {};
    \node[whitevertex] (v2) at (30,15) {};
    \node[whitevertex] (v3) at (90,15) {};
    \node[blackvertex] (v4) at (0,30) {};
    \node[blackvertex] (v5) at (120,30) {};
    \node[whitevertex] (v6) at (0,50) {};
    \node[whitevertex] (v7) at (40,50) {};
    \node[whitevertex] (v8) at (80,50) {};
    \node[whitevertex] (v9) at (120,50) {};
    \node[blackvertex] (v10) at (20,50) {};
    \node[blackvertex] (v11) at (100,50) {};
    \node[blackvertex] (v12) at (0,70) {};
    \node[blackvertex] (v13) at (120,70) {};
    \node[whitevertex] (v14) at (30,85) {};
    \node[whitevertex] (v15) at (90,85) {};
    \node[blackvertex] (v16) at (60,100) {};


    \draw (v1) -- (v2) node[midway,below] {\scriptsize$b$};
    \draw (v1) -- (v3) node[midway,below] {\scriptsize$a^{-1}$};

    \draw (v4) -- (v6) node[midway,left] {\scriptsize$1$};
    \draw (v4) -- (v2) node[midway,below] {\scriptsize$1$};
    \draw (v5) -- (v9) node[midway,right] {\scriptsize$1$};
    \draw (v5) -- (v3) node[midway,below] {\scriptsize$1$};

    \draw (v6) -- (v10) node[midway,below] {\scriptsize$1$};
    \draw (v10) -- (v7) node[midway,below] {\scriptsize$1$};
    \draw (v8) -- (v11) node[midway,below] {\scriptsize$1$};
    \draw (v11) -- (v9) node[midway,below] {\scriptsize$1$};

    \draw (v6) -- (v12) node[midway,left] {\scriptsize$1$};
    \draw (v9) -- (v13) node[midway,right] {\scriptsize$1$};

    \draw (v12) -- (v14) node[midway,above] {\scriptsize$a^{-1}$};
    \draw (v13) -- (v15) node[midway,above] {\scriptsize$b$};

    \draw (v14) -- (v16) node[midway,above] {\scriptsize$1$};
    \draw (v15) -- (v16) node[midway,above] {\scriptsize$1$};

    \draw (v7) -- (v16) node[midway,left] {\scriptsize$a$};
    \draw (v8) -- (v16) node[midway,right] {\scriptsize$b^{-1}$};
    \draw (v7) -- (v1) node[midway,left] {\scriptsize$b$};
    \draw (v8) -- (v1) node[midway,right] {\scriptsize$a^{-1}$};
    \draw (v2) -- (v10) node[midway,right] {\scriptsize$1$};
    \draw (v3) -- (v11) node[midway,left] {\scriptsize$1$};
    \draw (v14) -- (v10) node[midway,right] {\scriptsize$a^{-1}$};
    \draw (v15) -- (v11) node[midway,left] {\scriptsize$b$};

    \node[left=4pt]  at (v12) {\scriptsize$A_1$};
    \node[left=4pt]  at (v4)  {\scriptsize$A_2$};
    \node at ([xshift=-5pt,yshift=5pt]v10) {\scriptsize$A_3$};
    \node at ([xshift=5pt,yshift=5pt]v11) {\scriptsize$A_4$};
    \node[above=4pt] at (v16) {\scriptsize$A_2$};
    \node[below=4pt] at (v1)  {\scriptsize$A_1$};
    \node[right=4pt] at (v13) {\scriptsize$A_1$};
    \node[right=4pt] at (v5)  {\scriptsize$A_2$};

    \node[above left=2pt]  at (v14) {\scriptsize$\ell_1$};
    \node[left=2pt]        at (v6)  {\scriptsize$\ell_3$};
    \node[above right=2pt] at (v7)  {\scriptsize$\ell_4$};
    \node[above right=2pt] at (v8)  {\scriptsize$\ell_5$};
    \node[right=2pt]       at (v9)  {\scriptsize$\ell_3$};
    \node[below=2pt]       at (v2)  {\scriptsize$\ell_2$};
    \node[below=2pt]       at (v3)  {\scriptsize$\ell_1$};
    \node[above right=2pt] at (v15) {\scriptsize$\ell_2$};

    \end{tikzpicture}
    \caption{Another torus tiling corresponding to Pappus' theorem. Opposite
sides of the shown hexagonal fundamental domain should be glued to each other.}
    \label{fig:tiling_example}
\end{figure}
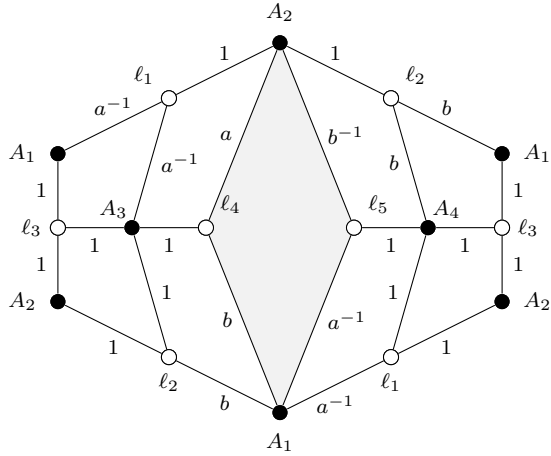

\begin{example}\label{Pappus2}
Theorem~\ref{HGT} guarantees that theorems corresponding to the tiling in Figure~\ref{fig:tiling_example} are equivalent to the commutativity of the ground ring in dimensions three or higher. However, in this case, the equivalence also holds in dimension two, because the corresponding planar theorem is again Pappus' theorem (see \cite[second proof of Theorem~3.2]{fomin2023incidences}). Let us explain why this is the case from the perspective of graph connections. Given a non-commutative division ring $\D$, we aim to construct a realization of the given tiling  in $\P^2(\D)$ such that all tiles except the shaded one are coherent, while the shaded tile is not. To this end, choose a connection on the $1$-skeleton of the tiling that has trivial holonomy around all tiles except the shaded one. Any such connection is gauge-equivalent to the one shown in Figure~\ref{fig:tiling_example} (where the edge labels represent the values of the connection on the edge oriented from black to white). This is because a flat connection on a torus with a disk removed is determined by its holonomies around two independent cycles. The connection depicted in the figure has holonomies $a,b \in \D^\times$ around these independent cycles, so one can find a connection of this form in every gauge-equivalence class.

Now, as in the proof of Theorem \ref{HGT}, we want to construct a realization in $\P^2(\D)$ that gives rise to this connection. That is, we need to assign vectors $\mathbf{A}_1, \dots, \mathbf{A}_4 \in \D^3$ to the black vertices and covectors $\Bell_1, \dots, \Bell_5 \in (\D^3)^*$ to the white vertices, as shown in the figure, so that the pairings $\Bell_i(\mathbf{A}_j)$ coincide with the edge labels. First, choose linearly independent vectors $\mathbf{A}_1, \mathbf{A}_2, \mathbf{A}_3$, and then, using the connection, determine the covectors $\Bell_1, \dots, \Bell_4$; for instance, $\Bell_1$ is determined by
\[
\Bell_1(\mathbf{A}_1) = a^{-1}, \quad 
\Bell_1(\mathbf{A}_2) = 1, \quad 
\Bell_1(\mathbf{A}_3) = a^{-1}.
\] 
Next, consider the matrix of pairings with entries $\Bell_j(\mathbf{A}_i)$ for $i,j = 1,2,3$:
\[
\begin{pmatrix}
a^{-1} & b & 1 \\
1 & 1 & 1 \\
a^{-1} & 1 & 1
\end{pmatrix}.
\] 
Using non-commutative Gaussian elimination (where rows may be multiplied by scalars on the left and columns by scalars on the right), we see that this matrix is invertible whenever $a,b \neq 1$.
 So, if this assumption holds, the covectors $\Bell_1, \Bell_2, \Bell_3$ are linearly independent, which in turn allows one to determine $\mathbf{A}_4$ using the connection. One then checks that the matrix of pairings between $\Bell_1, \Bell_2, \Bell_3$ and $\mathbf{A}_1, \mathbf{A}_2, \mathbf{A}_4$ is also invertible, again assuming $a,b \neq 1$.  So, the vectors $\mathbf{A}_1, \mathbf{A}_2, \mathbf{A}_4$ are linearly independent, allowing one to determine $\Bell_5$. This construction produces a realization for any $a,b \neq 1$. Since the holonomy around the shaded face is $aba^{-1}b^{-1}$, for any non-commuting $a,b$ the resulting realization has the property that all tiles except the shaded one are coherent, while the shaded tile is not. So, the associated theorem in dimension two fails over any non-commutative division ring.


\end{example}

We note that the key part of the argument in Example~\ref{Pappus2} relies on the fact that the graph in Figure~\ref{fig:tiling_example} can be constructed inductively by connecting each new vertex to at most three existing vertices. Such graphs are known as \emph{$3$-degenerate}. More generally, a graph is \emph{$k$-degenerate} if every subgraph has a vertex of degree at most~$k$. Equivalently, a $k$-degenerate graph is a graph that may be constructed by successively connecting new vertices to at most $k$ existing vertices. For such graphs, one may hope to use a version of the above argument to prove the following stronger version of Theorem \ref{HGT}.

\begin{conjecture}
Let $\mathcal T$ be a simple, $(n+1)$-degenerate tiling of a surface of positive genus, and let $\D$ be a division ring. Then the incidence theorem corresponding to $\mathcal T$ holds in $\P^n(\D)$ if and only if $\D$ is commutative. 

\end{conjecture}

We note that for every genus $g$ there exists a constant $k_g$ such that any graph embeddable in a genus $g$ surface is $k_g$-degenerate. In particular, any bipartite graph without multiple edges which embeds on a torus  is $4$-degenerate. As a consequence, the above conjecture would imply the following.

\begin{conjecture}
For any $g > 0$, there exists a constant $n_g$ such that, for any simple tiling $\mathcal T$ of a genus $g$ surface, the corresponding  incidence theorem over a division ring $\D$ is equivalent to commutativity of $\D$
in any dimension $n \geq n_g$. In particular, one has $n_1 = 3$, so that any theorem in dimension $n \geq 3$ corresponding to a tiling of the torus holds over a division ring $\D$ if and only if $\D$ is commutative.

\end{conjecture}


We note that these conjectures still allow for planar theorems corresponding to torus tilings that are valid over non-commutative division rings. Assuming the conjectures hold, such theorems must necessarily arise from tilings that are not $3$-degenerate. 
Below, we provide examples coming from toric embeddings of the complete bipartite graph $K_{4,4}$. For such tilings, the corresponding planar theorems turn out to be valid over any division ring.

\begin{figure}[t]
\centering

\begin{subfigure}{0.4\textwidth}
\centering
\begin{tikzpicture}[scale = 1,   
    whitevertex/.style={circle, draw, fill=black, minimum size=2mm, inner sep=0pt},
    blackvertex/.style={circle, draw, fill=white, minimum size=2mm, inner sep=0pt},
    dot/.style={dotted}, 
    line/.style={}
]

\draw[dashed] (0,0) -- (0,4) -- (4,4) -- (4,0) -- cycle;

\node[whitevertex] (W1) at (1,0) {};
\node[whitevertex] (W2) at (1,2) {};
\node[whitevertex] (W3) at (1,4) {};
\node[whitevertex] (W4) at (3,0) {};
\node[whitevertex] (W5) at (3,2) {};
\node[whitevertex] (W6) at (3,4) {};

\node[below] at (W1.south) {\scriptsize$A_1$};
\node[above] at (W2.north) {\scriptsize$A_3$};
\node[above] at (W3.north) {\scriptsize$A_1$};
\node[below] at (W4.south) {\scriptsize$A_2$};
\node[above] at (W5.north) {\scriptsize$A_4$};
\node[above] at (W6.north) {\scriptsize$A_2$};

\node[blackvertex] (B1) at (0,1) {};
\node[blackvertex] (B2) at (0,3) {};
\node[blackvertex] (B3) at (2,1) {};
\node[blackvertex] (B4) at (2,3) {};
\node[blackvertex] (B5) at (4,1) {};
\node[blackvertex] (B6) at (4,3) {};

\node[left] at (B1.west) {\scriptsize$\ell_3$};
\node[left] at (B2.west) {\scriptsize$\ell_1$};
\node[below] at (B3.south) {\scriptsize$\ell_4$};
\node[above] at (B4.north) {\scriptsize$\ell_2$};
\node[right] at (B5.east) {\scriptsize$\ell_3$};
\node[right] at (B6.east) {\scriptsize$\ell_1$};

\fill[gray!20, opacity=0.5] (W2.center) -- (B3.center) -- (W5.center) -- (B4.center) -- cycle;

\draw[line] (W1) -- (B1) node[midway,above] {\scriptsize$1$};
\draw[line] (B1) -- (W2) node[midway,below] {\scriptsize$1$};
\draw[line] (W2) -- (B2) node[midway,above] {\scriptsize$a$};
\draw[line] (B2) -- (W3) node[midway,below] {\scriptsize$1$};
\draw[line] (W3) -- (B4) node[midway,below] {\scriptsize$1$};
\draw[line] (B4) -- (W2) node[midway,above] {\scriptsize$a$};
\draw[line] (W2) -- (B3) node[midway,below] {\scriptsize$b$};
\draw[line] (B3) -- (W1) node[midway,above] {\scriptsize$b$};

\draw[line] (W4) -- (B3) node[midway,above] {\scriptsize$1$};
\draw[line] (B3) -- (W5) node[midway,below] {\scriptsize$a^{-1}$};
\draw[line] (W5) -- (B4) node[midway,above] {\scriptsize$\quad b^{-1}$};
\draw[line] (B4) -- (W6) node[midway,below] {\scriptsize$\,\, b^{-1}$};
\draw[line] (W6) -- (B6) node[midway,below] {\scriptsize$1$};
\draw[line] (B6) -- (W5) node[midway,above] {\scriptsize$1$};
\draw[line] (W5) -- (B5) node[midway,below] {\scriptsize$a^{-1}\quad$};
\draw[line] (B5) -- (W4) node[midway,above] {\scriptsize$1$};

\node[whitevertex] (W1) at (1,0) {};
\node[whitevertex] (W2) at (1,2) {};
\node[whitevertex] (W3) at (1,4) {};
\node[whitevertex] (W4) at (3,0) {};
\node[whitevertex] (W5) at (3,2) {};
\node[whitevertex] (W6) at (3,4) {};

\node[blackvertex] (B1) at (0,1) {};
\node[blackvertex] (B2) at (0,3) {};
\node[blackvertex] (B3) at (2,1) {};
\node[blackvertex] (B4) at (2,3) {};
\node[blackvertex] (B5) at (4,1) {};
\node[blackvertex] (B6) at (4,3) {};

\end{tikzpicture}
\\

\caption{}
\label{fig:mobt2}
\end{subfigure}
\begin{subfigure}{0.4\textwidth}
\centering
\begin{tikzpicture}[scale = 1,   
    whitevertex/.style={circle, draw, fill=black, minimum size=2mm, inner sep=0pt},
    blackvertex/.style={circle, draw, fill=white, minimum size=2mm, inner sep=0pt},
    dot/.style={dotted}, 
    line/.style={}
]

\draw[dashed] (0,0) -- (0,4) -- (4,4) -- (4,0) -- cycle;

\node[whitevertex] (W1) at (1,0) {};
\node[whitevertex] (W2) at (1,2) {};
\node[whitevertex] (W3) at (1,4) {};
\node[whitevertex] (W4) at (3,0) {};
\node[whitevertex] (W5) at (3,2) {};
\node[whitevertex] (W6) at (3,4) {};

\node[below] at (W1.south) {\scriptsize$A_2$};
\node[above] at (W2.north) {\scriptsize$A_3$};
\node[above] at (W3.north) {\scriptsize$A_1$};
\node[below] at (W4.south) {\scriptsize$A_1$};
\node[above] at (W5.north) {\scriptsize$A_4$};
\node[above] at (W6.north) {\scriptsize$A_2$};

\node[blackvertex] (B1) at (0,1) {};
\node[blackvertex] (B2) at (0,3) {};
\node[blackvertex] (B3) at (2,1) {};
\node[blackvertex] (B4) at (2,3) {};
\node[blackvertex] (B5) at (4,1) {};
\node[blackvertex] (B6) at (4,3) {};

\node[left] at (B1.west) {\scriptsize$\ell_3$};
\node[left] at (B2.west) {\scriptsize$\ell_1$};
\node[below] at (B3.south) {\scriptsize$\ell_4$};
\node[above] at (B4.north) {\scriptsize$\ell_2$};
\node[right] at (B5.east) {\scriptsize$\ell_3$};
\node[right] at (B6.east) {\scriptsize$\ell_1$};

\fill[gray!20, opacity=0.5] (W2.center) -- (B3.center) -- (W5.center) -- (B4.center) -- cycle;

\draw[line] (W1) -- (B1) node[midway,above] {\scriptsize$\quad b^{-1}$};
\draw[line] (B1) -- (W2) node[midway,below] {\scriptsize$1$};
\draw[line] (W2) -- (B2) node[midway,above] {\scriptsize$1$};
\draw[line] (B2) -- (W3) node[midway,below] {\scriptsize$1$};
\draw[line] (W3) -- (B4) node[midway,below] {\scriptsize$a$};
\draw[line] (B4) -- (W2) node[midway,above] {\scriptsize$a$};
\draw[line] (W2) -- (B3) node[midway,below] {\scriptsize$b$};
\draw[line] (B3) -- (W1) node[midway,above] {\scriptsize$1$};

\draw[line] (W4) -- (B3) node[midway,above] {\scriptsize$1$};
\draw[line] (B3) -- (W5) node[midway,below] {\scriptsize$a^{-1}$};
\draw[line] (W5) -- (B4) node[midway,above] {\scriptsize$\quad b^{-1}$};
\draw[line] (B4) -- (W6) node[midway,below] {\scriptsize$\,\, b^{-1}$};
\draw[line] (W6) -- (B6) node[midway,below] {\scriptsize$1$};
\draw[line] (B6) -- (W5) node[midway,above] {\scriptsize$1$};
\draw[line] (W5) -- (B5) node[midway,below] {\scriptsize$1$};
\draw[line] (B5) -- (W4) node[midway,above] {\scriptsize$a$};

\node[whitevertex] (W1) at (1,0) {};
\node[whitevertex] (W2) at (1,2) {};
\node[whitevertex] (W3) at (1,4) {};
\node[whitevertex] (W4) at (3,0) {};
\node[whitevertex] (W5) at (3,2) {};
\node[whitevertex] (W6) at (3,4) {};

\node[blackvertex] (B1) at (0,1) {};
\node[blackvertex] (B2) at (0,3) {};
\node[blackvertex] (B3) at (2,1) {};
\node[blackvertex] (B4) at (2,3) {};
\node[blackvertex] (B5) at (4,1) {};
\node[blackvertex] (B6) at (4,3) {};

\end{tikzpicture}
\caption{}
\label{fig:perm2}
\end{subfigure}

\caption{Tilings given by embeddings of $K_{4,4}$ into the torus. In both cases, opposite sides of the displayed fundamental domain are to be glued together  in such a way that vertices with the same labels are identified.}
\label{fig:torus-two-panel}

\end{figure}
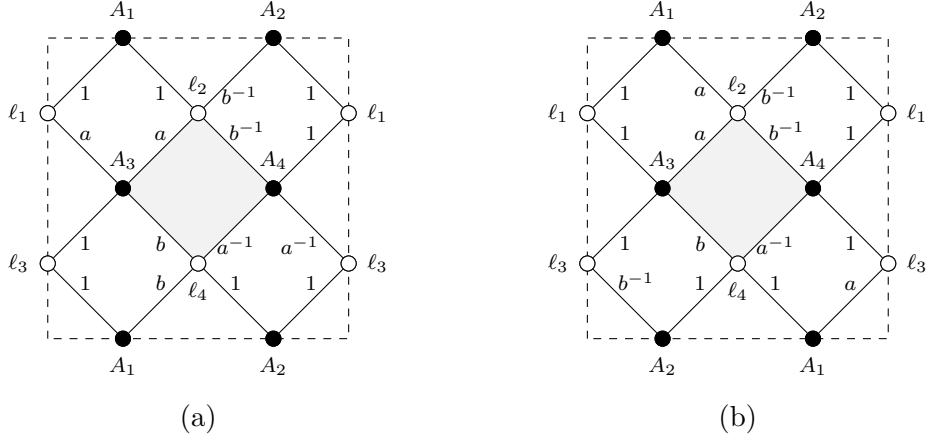

\begin{example}\label{truetoricthm}
The planar theorem corresponding to the tiling in Figure~\ref{fig:mobt2} is valid over any division ring. In fact, as explained in \cite[Example 5.18]{fomin2023incidences}, this theorem is rather trivial. Let us, however, explain the situation from the perspective of graph connections. Let $\D$ be a division ring, and assume we have a realization of the tiling from Figure~\ref{fig:mobt2} in $\P^2(\D)$ in which all tiles except the shaded one are coherent. We aim to show that the shaded tile is also coherent. To this end, consider the gauge-equivalence class of connections corresponding to the given realization. Within this class, we can always choose a connection as shown in the figure. Then the matrix $\Bell_j(\mathbf A_i)$ of pairings between vectors and covectors associated to the vertices is
\[
\begin{pmatrix}
1 & 1 & 1 & b \\
1 & b^{-1} & 1 & 1 \\
a & a & 1 & b \\
1 & b^{-1} & a^{-1} & a^{-1} \\
\end{pmatrix}
\]

Since the realization lies in $\P^2$, this matrix must be non-invertible. Using non-commutative Gaussian elimination, one finds that this occurs if and only if $a = 1$ or $b = 1$. In either case, $a$ and $b$ commute, so the holonomy of the connection around the shaded tile is trivial, which implies that the tile is coherent.

\end{example}

\begin{example}

Another embedding of $K_{4,4}$ into a torus is shown in Figure~\ref{fig:perm2}. The corresponding planar theorem is the permutation theorem (see \cite[Proof of Theorem 3.11]{fomin2023incidences}). Since this theorem can also be obtained from a triangulation of the sphere (see Example~\ref{permoct}), it holds over any division ring. This can also be seen from the perspective of graph connections.  Figure~\ref{fig:perm2} shows a connection that has trivial holonomy around all faces except for the shaded one. Any connection with this property is gauge-equivalent to the one shown. The associated matrix is
\[
\begin{pmatrix}
1 & a & a & 1 \\
1 & b^{-1} & b^{-1} & 1 \\
1 & a & 1 & b \\
1 & b^{-1} & 1 & a^{-1} 
\end{pmatrix}
\]
which is non-invertible if and only if either $a = b$ or $a = b^{-1}$. In both cases, $a$ and $b$ must commute, which implies that the associated planar theorem holds over any division ring.

\end{example}

Note that in all these examples, the associated theorem is either valid over all division rings, or holds if and only if the ring is a field.


\begin{problem}
Does there exist an incidence theorem associated with a tiling that holds over some, but not all, non-commutative division rings?
\end{problem}

It is known that there exist planar incidence theorems that hold, for example, in the ring of quaternions but fail in an arbitrary division ring. Such theorems clearly do not arise from any polygonal subdivision. Can a theorem of this type be realized via a tiling?

\section{Variation I: relative coherence}\label{sec:rel}

Given a division ring $\D$ and a normal subgroup $G \subset \D^\times$, one can also consider the following notion of \emph{coherence relative to $G$}:

\begin{definition}
Consider two points $A_1,A_2$ and two hyperplanes $\ell_1, \ell_2$ in the projective space $\P^n(\D)$. Assume that neither point lies on either hyperplane. Let $\mathbf A_1, \mathbf A_2 \in \D^{n+1}$ be vectors representing the points $A_1$ and $A_2$, and let $\Bell_1, \Bell_2 \in (\D^{n+1})^*$ be covectors representing the hyperplanes $\ell_1$ and $\ell_2$. The quadruple $A_1, A_2, \ell_1, \ell_2$ is said to be \emph{coherent relative to a normal subgroup $G \subset \D^\times$} if
\begin{equation*}
\Bell_1(\mathbf A_1)\,\Bell_1(\mathbf A_2)^{-1}
=
\Bell_2(\mathbf A_1)\,\Bell_2(\mathbf A_2)^{-1} \mod G.
\end{equation*}
Accordingly, given a realization of a tiling, a tile is said to be \emph{coherent relative to $G$} if the points $A_1,A_2$ assigned to its black vertices and the hyperplanes $\ell_1,\ell_2$ assigned to its white vertices form a quadruple coherent relative to $G$.
\end{definition}

Theorem~\ref{ST} holds for relative coherence without modification. 
Theorem~\ref{HGT} also remains valid, with the conclusion that the incidence theorem corresponding to $\mathcal T$ holds in $\P^n(\D)$ if and only if the quotient group $\D^\times / G$ is Abelian. 
The proofs of both theorems are obtained by replacing $\D^\times$-connections with connections valued in $\D^\times / G$.

\begin{example}
Let $\D = \mathbb H$ be the division ring of quaternions, and let $G = \mathbb R^\times$. Consider the projective line $\mathbb H \P^1 = S^4$. Since in dimension one points and hyperplanes coincide, it makes sense to speak about coherence of four points. By the above definition, four points $A,B,C,D \in \mathbb H \P^1$, with $A \neq B$, $B \neq C$, $C \neq D$, and $D \neq A$, are coherent relative to $G =\mathbb R^\times $ if their cross-ratio is real. This is equivalent to the points $A,B,C,D$ being cocyclic, see \cite[Proposition 12]{gwynne2012quaternionic}. As a result, Theorem \ref{ST} specializes to the following:

\begin{theorem}\label{CT}
Consider a quadrilateral tiling of the sphere and assign to each of its vertices a point in $S^4$ so that the points assigned to the endpoints of every edge are distinct. Assume that for all but one face of the tiling the points assigned to the vertices of the face are cocyclic. Then the points assigned to the vertices of the remaining face are also cocyclic.
\end{theorem}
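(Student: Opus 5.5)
The plan is to deduce Theorem~\ref{CT} from the relative version of Theorem~\ref{ST}, applied with $\D=\mathbb H$, $n=1$, and $G=\mathbb R^\times$. The subgroup $\mathbb R^\times$ is central in $\mathbb H^\times$, so it is normal, and the relative notion of coherence is well defined.

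\textbf{Step 1: translate the data.} Identify $S^4$ with $\mathbb H\P^1$. In $\P^1(\mathbb H)$ a hyperplane is a point: a point $[\mathbf v]$ corresponds to the covector $\Bell$, unique up to right scaling, with $\ker\Bell=\mathbb H\mathbf v$. Given a tiling of the sphere with points of $S^4$ at its vertices, keep the assigned point at each black vertex. At each white vertex, take the hyperplane equal to the assigned point. The hypothesis that the two endpoints of each edge carry distinct points says exactly that the point is not incident to the hyperplane, so this is a realization in the sense of Definition~\ref{def:realization}.

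\textbf{Step 2: identify relative coherence with cocyclicity.} For a tile with black points $A_1,A_2$ and white points $C_1=\ell_1$, $C_2=\ell_2$, the quantity
\[
\Bell_1(\mathbf A_1)\Bell_1(\mathbf A_2)^{-1}\bigl(\Bell_2(\mathbf A_1)\Bell_2(\mathbf A_2)^{-1}\bigr)^{-1}
\]
is a cross-ratio of the four points, up to the ambiguities caused by rescaling lifts. Rescaling lifts changes this element only by conjugation, which is the same gauge freedom as in the proof of Theorem~\ref{ST}. Being real is preserved by conjugation, since $\mathbb R^\times$ is central. So the condition ``$\equiv 1 \bmod \mathbb R^\times$'' says that a conjugation-invariant cross-ratio is real.

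By \cite[Proposition 12]{gwynne2012quaternionic}, this is equivalent to the four points being cocyclic, under the distinctness assumptions: consecutive vertices around the tile are joined by edges and are therefore distinct. This is the step I expect to be the main obstacle. One has to check that the paper's normalization of the cross-ratio matches the one in \cite{gwynne2012quaternionic} up to conjugation and inversion; both operations preserve reality. One also has to check that degenerate cases, such as $A_1=A_2$, are handled consistently. In that case the quadruple is coherent, and three or fewer distinct points are trivially cocyclic.

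\textbf{Step 3: conclude.} The relative version of Theorem~\ref{ST} comes from running its proof with $\D^\times/\mathbb R^\times$-valued connections together with Corollary~\ref{sphereCor}. It gives the following: if all tiles but one are coherent relative to $\mathbb R^\times$, then so is the remaining tile. Translating back through Step~2, if all faces but one are cocyclic, the last face is cocyclic as well.
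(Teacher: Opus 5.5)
Your proposal is correct and follows the paper's own route. You specialize the relative version of Theorem~\ref{ST} (obtained by running its proof with $\D^\times/G$-valued connections and Corollary~\ref{sphereCor}) to $\D=\mathbb H$, $n=1$, $G=\mathbb R^\times$, and identify coherence relative to $\mathbb R^\times$ with reality of the quaternionic cross-ratio, hence with cocyclicity by \cite[Proposition 12]{gwynne2012quaternionic}. Your additional checks fill in details the paper leaves implicit: that distinct edge endpoints give a realization, that rescaling lifts only conjugates the cross-ratio, and that the degenerate cases $A_1=A_2$ or $\ell_1=\ell_2$ behave correctly.
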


For the tiling with the combinatorial structure of a cube, this is \cite[Theorem~3.2]{bobenko2008}, which lies at the heart of the construction of \emph{circular nets}. The planar case of Theorem~\ref{CT} is \cite[Corollary~11.6]{fomin2023incidences}. This planar case corresponds to $\D=\C$ and $G=\mathbb R^\times$. Since in this case $\D$ is commutative, the planar theorem also holds for tilings of higher-genus surfaces. 


\end{example}

\section{Variation II: arbitrary rings}
The results of this paper also extend, with suitable modifications, to rings that do not necessarily admit division. See \cite{hand2026pentagram} for a general overview of projective geometry over arbitrary rings. We note that in this general setting there are no universally accepted definitions of standard objects in projective geometry. The definition given below is adapted to the setting of the present paper.

\begin{definition}
Let $\R$ be a ring and $M$ an $R$-module. A \emph{point} in the associated projective space $\P(M)$ is a free rank-one submodule $A \subset M$ that is a direct summand of $M$, meaning that there exists another submodule $\ell \subset M$ such that $M = A \oplus \ell$. A \emph{hyperplane} in $\P(M)$ is a submodule $\ell \subset M$ such that $M = A \oplus \ell$ for some point $A$. A point $A \subset M$ and a hyperplane $\ell \subset M$ are said to be \emph{non-neighboring} if $A \oplus \ell = M$, cf.~\cite{veldkamp1967}.

If $M$ is a free (left) $R$-module of rank $n+1$, then $\P(M)$ is called the (left) \emph{$n$-dimensional projective space over $\R$} and is denoted by $\P^n(\R)$.
\end{definition}

\begin{example}
Let $\R = \mathrm{M}_k(\F)$ be the ring of $k \times k$ matrices over a field $\F$. Then any left $R$-module is of the form $V^k \simeq V \otimes \F^k$ for some vector space $V$, where $\mathrm{M}_k(\F)$ acts on $V \otimes \F^k$ by acting on the second tensor factor. Assuming that $V$ has finite dimension $n$ and choosing a basis, one can identify $V^k$ with the space of $k \times n$ matrices, on which $\mathrm{M}_k(\F)$ acts by left multiplication.

The correspondence $W \mapsto W^k$, where $W \subset V$, identifies the lattice of subspaces of $V$ with the lattice of submodules of $V^k$. A subspace $W \subset V$ gives rise to a free rank-one submodule if and only if $\dim W = k$. Thus, for $M = V^k$, points in $\P(M)$ correspond to subspaces of $V$ of dimension $k$, and hyperplanes in $\P(M)$ correspond to subspaces of $V$ of codimension $k$. A point and a hyperplane are non-neighboring if and only if the corresponding subspaces are transversal; see~\cite[Section~3.2]{hand2026pentagram}.

In particular, $\P^n(\mathrm{M}_k(\F))$ can be identified with the Grassmannian $\mathrm{Gr}(k, k(n+1))$, while hyperplanes in $\P^n(\mathrm{M}_k(\F))$ correspond to points of the dual Grassmannian $\mathrm{Gr}(kn, k(n+1))$.
\end{example}

\begin{example}
Let $\R = \mathbb{R}[\varepsilon] / (\varepsilon^2)$ be the ring of \emph{dual numbers}. Then $\P^2(\R)$ can be identified with the set of lines in Euclidean space $\mathbb{R}^3$. Given a line through a point $p \in \mathbb{R}^3$ with direction vector $v \in \mathbb{R}^3$, the corresponding point in $\P^2(\R)$ is the submodule spanned by $v + \varepsilon v \times p$, where $v \times p$ denotes the cross product; see~\cite[Section~3.5]{hand2026pentagram}.

Hyperplanes in $\P^2(\R)$ can also be identified with lines in $\mathbb{R}^3$. (Although $\P^2(\R)$ is a projective plane, we avoid calling hyperplanes “lines” so as not to confuse them with Euclidean lines.) Given a line through a point $p$ with direction vector $v$, the corresponding hyperplane in $\P^2(\R)$ is the submodule $(v + \varepsilon v \times p)^\bot$, where the orthogonal complement is taken with respect to the natural extension of the Euclidean inner product on $\mathbb{R}^3$ to $\R^3$.

A point and a hyperplane in $\P^2(\R)$ are incident if and only if the corresponding lines in $\mathbb{R}^3$ meet at a right angle. A point and a hyperplane in $\P^2(\R)$ are neighboring if and only if the direction vectors of the corresponding lines in $\mathbb{R}^3$ are orthogonal.
\end{example}

%
%

\begin{definition}[cf. Definition \ref{def:realization}] Let $\R$ be an arbitrary ring, and $M$ be an $R$-module. 
A \emph{realization} of a tiling $\mathcal T$ in $\P(M)$ is an assignment of a point in $\P(M)$ to each black vertex of $\mathcal T$ and a hyperplane in $\P(M)$ to each white vertex, such that for every edge of $\mathcal T$ the point and the hyperplane assigned to its endpoints are non-neighboring.
\end{definition}

\begin{definition}[cf. Definiton \ref{def:coh}]
Let $\R$ be a ring, and $M$ be a left $R$-module. Consider two points $A_1, A_2$ and two hyperplanes $\ell_1, \ell_2$ in the projective space $\P(M)$. Assume that all point-hyperplane pairs are non-neighboring. 
Then the quadruple $A_1, A_2, \ell_1, \ell_2$ is said to be \emph{coherent} if
$$
(A_1 + A_2) \cap \ell_1 = (A_1 + A_2) \cap \ell_2.
$$

Accordingly, given a realization of a tiling, a tile is said to be \emph{coherent} if the points $A_1, A_2$ assigned to its black vertices and the hyperplanes $\ell_1, \ell_2$ assigned to its white vertices form a coherent quadruple.
\end{definition}

\begin{example}
Consider an $\F$-vector space $V$. Let $A_1, A_2 \subset V$ be subspaces of dimension $k$, and $\ell_1, \ell_2 \subset V$ be subspaces of codimension $k$ transversal to $A_1$ and $A_2$. Then $A_1, A_2$, viewed as points in the projectivization $\P(V^k)$ of the left $\mathrm{M}_k(\F)$-module $V^k$, and $\ell_1, \ell_2$, viewed as hyperplanes in $\P(V^k)$, are coherent if and only if
$$
(A_1 + A_2) \cap \ell_1 = (A_1 + A_2) \cap \ell_2.
$$

\end{example}

We now reformulate the coherence condition algebraically. Given a point $A \in \P(M)$, its \emph{lift} is any $\mathbf A \in M$ that generates $A$ as a submodule. Likewise, given a hyperplane $\ell$, its lift is any $\Bell \in M^*$ generating the annihilator of $\ell$; such a generator exists because the definition of a hyperplane implies that annihilator of $\ell$ is a point in $\P(M^*)$. A point $A$ and a hyperplane $\ell$ in $\P(M)$ are non-neighboring if and only if, for their lifts $\mathbf A \in M$ and $\Bell \in M^*$, the element $\Bell(\mathbf A)$ is a unit in $\R$.

\begin{proposition}[cf. Proposition \ref{propCoh}]\label{propCoh2}
Let $\R$ be a ring, and let $M$ be a left $\R$-module. Consider two points $A_1, A_2$ and two hyperplanes $\ell_1, \ell_2$ in the projective space $\P(M)$, and assume that all point--hyperplane pairs are non-neighboring. Let $\mathbf A_1, \mathbf A_2 \in M$ be lifts of $A_1$ and $A_2$, and let $\Bell_1, \Bell_2 \in M^*$ be lifts of $\ell_1$ and $\ell_2$. Then $A_1, A_2, \ell_1, \ell_2$ are coherent if and only if their lifts satisfy \eqref{eq2st}.
\end{proposition}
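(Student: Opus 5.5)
The plan is to reduce everything to a computation inside the submodule $N := A_1 + A_2$, using the non-neighboring hypotheses to identify $N$ and the intersections $N \cap \ell_j$ explicitly. Since $M = A_1 \oplus \ell_1$ and $\Bell_1$ generates the annihilator of $\ell_1$, we have $\ell_1 = \ker \Bell_1$. Indeed, $\ell_1 \subseteq \ker\Bell_1$ by definition. Conversely, an element $x = r\mathbf A_1 + y$ with $y \in \ell_1$ lies in $\ker \Bell_1$ only if $r\Bell_1(\mathbf A_1) = 0$, which forces $r = 0$ because $\Bell_1(\mathbf A_1)$ is a unit. The same holds for $\ell_2$. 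So coherence reads $N \cap \ker\Bell_1 = N \cap \ker \Bell_2$.

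Next I describe $N \cap \ker\Bell_j$. Write $u_{ij} := \Bell_j(\mathbf A_i)$; all four are units. An element $x = r_1\mathbf A_1 + r_2 \mathbf A_2$ lies in $\ker \Bell_j$ if and only if $r_1 u_{1j} + r_2 u_{2j} = 0$, that is, $r_2 = -r_1 u_{1j}u_{2j}^{-1}$. Hence
\[
N \cap \ker\Bell_j = \{\, r(\mathbf A_1 - u_{1j}u_{2j}^{-1}\mathbf A_2) : r \in \R \,\} =: \R\,\mathbf C_j .
\]
One subtlety is that $\mathbf A_1, \mathbf A_2$ need not be linearly independent, so the pair $(r_1,r_2)$ is not unique. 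The description is still correct as a set, though: we only used that $x$ is a combination satisfying the linear condition, and for every $r$ the element $r\mathbf C_j$ lies in both $N$ and $\ker\Bell_j$.

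It follows that coherence is equivalent to $\R\mathbf C_1 = \R\mathbf C_2$. The direction \eqref{eq2st} $\Rightarrow$ coherence is immediate, because \eqref{eq2st} says exactly $\mathbf C_1 = \mathbf C_2$. For the converse, suppose $\R\mathbf C_1 = \R\mathbf C_2$, so $\mathbf C_1 \in \ker\Bell_2$. Evaluating gives $\Bell_2(\mathbf C_1) = u_{12} - u_{11}u_{21}^{-1}u_{22} = 0$, hence $u_{11}u_{21}^{-1} = u_{12}u_{22}^{-1}$, which is \eqref{eq2st}. Note that no independence of $\mathbf A_1, \mathbf A_2$ is needed in this step, because we evaluate the covector directly.

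The main obstacle, mild as it is, is the first step: justifying $\ell_j = \ker\Bell_j$ over an arbitrary ring from the definitions of hyperplane, annihilator generator, and non-neighboring. Everything after that is routine linear algebra with units. In particular, independence of the lifts never enters, and the argument is insensitive to rescaling the lifts, since rescaling by units preserves \eqref{eq2st}.
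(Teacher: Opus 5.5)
Your proof is correct. In the direction coherence $\Rightarrow$ \eqref{eq2st} it coincides with the paper's. The element \eqref{eq:boldA} used there is exactly $u_{11}^{-1}\mathbf C_1$, and both arguments place it in $\ell_1\cap(A_1+A_2)$, move it into $\ell_2$ by coherence, and evaluate $\Bell_2$.

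The direction \eqref{eq2st} $\Rightarrow$ coherence is where you genuinely differ.
\begin{itemize}
\item The paper never computes $(A_1+A_2)\cap\ell_j$. It forms the covector $\Bell=\Bell_1u_{21}^{-1}-\Bell_2u_{22}^{-1}$ of \eqref{eq:boldL} and observes via \eqref{eq2st} that it vanishes on $A_1+A_2$. It then concludes that $\Bell_1$ and $\Bell_2$, restricted to $A_1+A_2$, differ by right multiplication by a unit, so they have the same zeros there.
\item You instead identify $(A_1+A_2)\cap\ker\Bell_j$ as the cyclic submodule $\R\mathbf C_j$ with an explicit generator. Then \eqref{eq2st} literally reads $\mathbf C_1=\mathbf C_2$, and coherence becomes $\R\mathbf C_1=\R\mathbf C_2$.
\end{itemize}

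Your version gives one structural description from which both directions follow. It also spells out the identification $\ell_j=\ker\Bell_j$, obtained from $M=A_j\oplus\ell_j$ and invertibility of $\Bell_j(\mathbf A_j)$. The paper uses this identification tacitly when passing between ``$\Bell_j(\mathbf A)=0$'' and ``$\mathbf A\in\ell_j$''.

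The paper's pairing of an auxiliary vector $\mathbf A$ with an auxiliary covector $\Bell$ pays off in Remark~\ref{rem:dualcoh}. There the same two objects are reused, with their roles exchanged, to prove equivalence with the dual coherence condition \eqref{eq:dc}.
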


\begin{proof}
Assume that $A_1, A_2, \ell_1, \ell_2$ are coherent, and set
\begin{equation}\label{eq:boldA}
\mathbf A :=
\Bell_1(\mathbf A_1)^{-1}\mathbf A_1 - \Bell_1(\mathbf A_2)^{-1}\mathbf A_2.
\end{equation}
Then $\Bell_1(\mathbf A)=0$, so $\mathbf A \in \ell_1 \cap (A_1 + A_2)$. By coherence, $\mathbf A \in \ell_2 \cap (A_1 + A_2)$, hence $\Bell_2(\mathbf A)=0$. On the other hand,
\[
\Bell_2(\mathbf A)
=
\Bell_1(\mathbf A_1)^{-1}\Bell_2(\mathbf A_1)
-
\Bell_1(\mathbf A_2)^{-1}\Bell_2(\mathbf A_2),
\]
so $\Bell_2(\mathbf A)=0$ yields \eqref{eq2st}. Conversely, assume that \eqref{eq2st} holds, and define $\Bell \in M^*$ by
\begin{equation}\label{eq:boldL}
\Bell(\mathbf A) := \Bell_1(\mathbf A)\Bell_1(\mathbf A_2)^{-1} - \Bell_2(\mathbf A)\Bell_2(\mathbf A_2)^{-1}.
\end{equation}
Then $\Bell(\mathbf A_2)=0$ by definition, and $\Bell(\mathbf A_1)=0$ by \eqref{eq2st}. Hence $\Bell$ vanishes on $A_1 + A_2$, which implies
\[
\Bell_1(\mathbf A)\Bell_1(\mathbf A_2)^{-1}
=
\Bell_2(\mathbf A)\Bell_2(\mathbf A_2)^{-1}
\]
for all $\mathbf A \in A_1 + A_2$. Therefore, $\Bell_1(\mathbf A)=0$ if and only if $\Bell_2(\mathbf A)=0$ for all $\mathbf A \in A_1 + A_2$, which is equivalent to the definition of coherence.
\end{proof}
\begin{remark}\label{rem:dualcoh}
Dually, one can define coherence by the condition
\begin{equation}\label{eq:dc}
A_1 + (\ell_1 \cap \ell_2) = A_2 + (\ell_1 \cap \ell_2).
\end{equation}
Its equivalence to \eqref{eq2st} is shown as follows.  Assume \eqref{eq2st} holds and let $\mathbf A \in M$ be given by~\eqref{eq:boldA}. Then $\mathbf A \in \ell_1 \cap \ell_2$, which implies $\mathbf A_1 \in A_2 + (\ell_1 \cap \ell_2)$ and $\mathbf A_2 \in A_1 + (\ell_1 \cap \ell_2)$, hence~\eqref{eq:dc} holds. Conversely, assume \eqref{eq:dc} and let $\Bell \in M^*$ be defined by \eqref{eq:boldL}. Then $\Bell$ vanishes on $\ell_1 \cap \ell_2$ and on $A_2$. Since \eqref{eq:dc} implies $A_1 \subset A_2 + (\ell_1 \cap \ell_2)$, it follows that $\Bell$ also vanishes on $A_1$, yielding \eqref{eq2st}.
\end{remark}

\begin{example}
Consider four lines $A_1, A_2, \ell_1, \ell_2$ in Euclidean space $\mathbb{R}^3$. Assume that the direction vector of $\ell_i$ is not perpendicular to that of $A_j$ for all $i,j=1,2$, and denote by $S(\cdot,\cdot)$ the common perpendicular of two skew lines. Using the characterizations of coherence obtained above, one can show that $A_1, A_2$ and $\ell_1, \ell_2$, viewed respectively as points and hyperplanes in the projective plane over the dual numbers, are coherent if and only if one of the following holds:
\begin{itemize}
\item $A_1 \parallel A_2$ and $\ell_1 \parallel \ell_2$;
\item $A_1$ and $A_2$ are skew, and
\[
S(S(A_1, A_2), \ell_1) = S(S(A_1, A_2), \ell_2),
\]
(note that under our assumptions, if $A_1$ and $A_2$ are skew, then $S(A_1,A_2)$ is automatically skew to $\ell_1$ and $\ell_2$, so the lines $S(S(A_1, A_2), \ell_i)$ are well defined);
\item $\ell_1$ and $\ell_2$ are skew, and
\[
S(S(\ell_1, \ell_2), A_1) = S(S(\ell_1, \ell_2), A_2),
\]
(under our assumptions, if $\ell_1$ and $\ell_2$ are skew, then $S(\ell_1,\ell_2)$ is automatically skew to $A_1$ and $A_2$, so the lines $S(S(\ell_1, \ell_2), A_i)$ are well defined).
\end{itemize}

When both pairs $A_1, A_2$ and $\ell_1, \ell_2$ are skew, coherence simply means that their common perpendiculars $S(A_1, A_2)$ and $S(\ell_1, \ell_2)$ meet at a right angle. This is a direct analog of the coherence condition $\ell_1 \cap \ell_2 \in A_1A_2$ in projective planes over division rings.\end{example}

Since coherence over arbitrary rings corresponds to the same algebraic condition \eqref{eq2st} as in the case of division rings, our theorems extend to this more general setting. In particular, Theorem~\ref{ST} holds for an arbitrary ring $\R$ without modification. Furthermore, if $\R$ is commutative, the assumption that the tiling lies on the sphere can be dropped; cf.~\cite{kuhne2025absolute}.

%
%
%
%
\begin{example}

The \emph{correspondence principle} of \cite{tabachnikov2018skewers} asserts that any planar incidence theorem gives rise to an incidence theorem about lines in $\mathbb R^3$ as follows: replace all points and lines by lines in $\mathbb R^3$, and replace incidence between a point and a line by the condition that the corresponding lines meet at a right angle. Applying Theorem~\ref{ST} to the ring of dual numbers yields the correspondence principle for incidence theorems arising from tilings (the genus of the surface is irrelevant since the ring of dual numbers is commutative).


\end{example}

\begin{example}
In any theorem associated with a spherical tiling, one can replace points and hyperplanes with, respectively, dimension $k$ and codimension $k$ subspaces. This corresponds to taking the ground ring to be $\mathrm M_k(\F)$. In this case the genus is important, since the ring of matrices is noncommutative for $k > 1$.

\end{example}

 We also have the following version of Theorem \ref{HGT}:

\begin{theorem}\label{HGTR}
Let $\mathcal T$ be a simple tiling of a surface of positive genus, and let $\R$ be a ring. Suppose that
$
n \ge |B(\mathcal T)| - 1,
$
where $B(\mathcal T)$ denotes the set of black vertices of $\mathcal T$. Then the incidence theorem corresponding to $\mathcal T$ holds in $\P^n(\R)$ if and only if the group of units $\R^\times$ is commutative.
\end{theorem}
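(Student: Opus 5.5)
Both directions go through the connection construction used in the proof of Theorem~\ref{ST}. Lift the points and hyperplanes of a realization in $\P^n(\R)$ to vectors $\mathbf A$ in $M=\R^{n+1}$ and covectors $\Bell \in M^*$. For each edge $e$, oriented from black to white, set $\phi(e) := \Bell_e(\mathbf A_e)$. This lies in $\R^\times$ because the two endpoints are non-neighboring. Rescaling a lift by a unit changes $\phi$ by a gauge transformation. By Proposition~\ref{propCoh2}, a tile is coherent exactly when the holonomy around it is trivial, since \eqref{eq2st} is the statement that the product of the four edge values around the tile, with alternating inverses, equals $1$.

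For the \emph{if} direction, assume $\R^\times$ is commutative. Then Corollary~\ref{abCor}, applied with $G=\R^\times$, shows that trivial holonomy around all tiles but one forces trivial holonomy around the last. So the theorem holds in every dimension.

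For the \emph{only if} direction, assume $\R^\times$ is non-Abelian and fix a tile $T_0$. By Corollary~\ref{p2} there is an $\R^\times$-connection $\phi$ on the $1$-skeleton with trivial holonomy around every tile except $T_0$ and non-trivial holonomy around $T_0$. It remains to realize $\phi$ in $\P^n(\R)$.

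Here the hypothesis $n+1 \ge |B(\mathcal T)|$ replaces the general-position argument of Corollary~\ref{corinf}, which needs division and infinitude. Enumerate the black vertices $b_1,\dots,b_k$ with $k\le n+1$ and assign to $b_i$ the standard basis vector $\mathbf e_i\in\R^{n+1}$. This generates a free rank-one direct summand, so it defines a point. For a white vertex $w$, define the covector $\Bell_w$ on the basis by $\Bell_w(\mathbf e_i) := \phi(e)$ if some edge $e$ joins $b_i$ to $w$, and $\Bell_w(\mathbf e_i):=1$ otherwise. The coordinates with $i>k$ can also be set to $1$, or to $0$ if $k<n+1$. Because $\mathcal T$ is simple, each pair $b_i,w$ is joined by at most one edge, so this is well defined. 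Since $\Bell_w(\mathbf e_1)$ is a unit, $\Bell_w$ is unimodular, i.e.\ surjective onto $\R$.

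The step needing care is checking that $\ker \Bell_w$ is a hyperplane in the sense of the definition. Take $A$ to be the span of $\mathbf e_i$ for any $i$ with $\Bell_w(\mathbf e_i)$ a unit. Then $M = A\oplus \ker\Bell_w$, via the projection $\mathbf v\mapsto \mathbf v-\mathbf e_i\Bell_w(\mathbf e_i)^{-1}\Bell_w(\mathbf v)$, which needs right-linearity care. Hence $\ker\Bell_w$ is a hyperplane, and its annihilator is generated by $\Bell_w$, so $\Bell_w$ is a lift in the sense used before Proposition~\ref{propCoh2}.

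Every edge pairing is then the unit $\phi(e)$, so each edge joins a non-neighboring point and hyperplane, and the construction is a realization. Its associated connection is exactly $\phi$. By Proposition~\ref{propCoh2}, all tiles except $T_0$ are coherent and $T_0$ is not, so the theorem fails. Note that no linear-independence condition beyond the basis choice is needed: coherence over rings is purely the algebraic identity \eqref{eq2st}, which is why the crude bound $n\ge|B(\mathcal T)|-1$ suffices.
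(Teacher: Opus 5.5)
Your proposal is correct and is essentially the paper's argument: the \emph{if} direction is Corollary~\ref{abCor} applied to $\R^\times$-valued connections, and the \emph{only if} direction realizes the connection from Corollary~\ref{p2}. It does so by giving the black vertices distinct standard basis vectors and prescribing each white covector freely on that basis, exactly as the paper indicates; you additionally verify the hyperplane and non-neighboring conditions, which the paper leaves implicit. One small slip: in a left module the projection onto $\ker\Bell_w$ along $\R\mathbf e_i$ is $\mathbf v\mapsto\mathbf v-\Bell_w(\mathbf v)\Bell_w(\mathbf e_i)^{-1}\mathbf e_i$, not the factor order you wrote, but the decomposition $\R^{n+1}=\R\mathbf e_i\oplus\ker\Bell_w$ you need is true.
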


Note that in this setting we need a stronger assumption on the dimension, because Corollary~\ref{corinf} does not hold for arbitrary rings. To prove Theorem~\ref{HGTR}, one proceeds as in the proof of Theorem~\ref{HGT} but, instead of assigning to black vertices vectors from the set coming from Corollary~\ref{corinf}, one simply assigns a distinct basis vector to each black vertex. This is possible because of the assumption $n \ge |B(\mathcal T)| - 1$.

\begin{example}
Applying this theorem to the tiling from Figure \ref{fig:pappust}, we see that the Pappus theorem fails over any ring with non-Abelian group of units.  
\end{example}

\begin{remark} Theorem~\ref{HGTR} remains true assuming only that
$
n \ge \max_{v \in W(\mathcal T)} \bigl( \deg v \bigr) - 1,
$
provided that Corollary~\ref{corinf} holds over the ring $\R$. For example, this is the case for rings that are algebras over an infinite field $\F$. In that case, the set $S$ can be taken to consist of vectors with coordinates $(1, \dots, \lambda_i^{k-1})$, where $\lambda_i \in \F$ are distinct.

\end{remark}

Theorems~\ref{ST} and~\ref{HGTR} also admit relative versions over arbitrary rings, where tiles are assumed to be coherent only modulo a normal subgroup $G \subset \R^\times$, as in Section~\ref{sec:rel}.
\begin{example}
Let $\R = \mathrm{M}_2(\F)$. Then points in $\P^1(\R)$ correspond to lines in $\P^3(\F)$. Consider the subgroup $G \subset \R^\times$ consisting of scalar matrices. Then coherence relative to $G$ for four lines $\ell_1, \dots, \ell_4 \subset \P^3(\F)$ is equivalent to the existence of infinitely many lines simultaneously meeting $\ell_1, \dots, \ell_4$, or, equivalently, to the condition that $\ell_1, \dots, \ell_4$ lie in a single ruling of a quadric surface. This yields the following version of Theorem~\ref{CT}:

\begin{theorem}\label{CT2}
Consider a quadrilateral tiling of the sphere. Assign to each vertex a line in $\P^3$ so that the lines assigned to the endpoints of every edge are skew. Assume that for all but one face of the tiling, the four lines assigned to its vertices lie in a single ruling of a quadric surface. Then the lines assigned to the vertices of the remaining face also lie in a single ruling of a quadric surface.
\end{theorem}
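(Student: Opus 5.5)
The plan is to obtain Theorem~\ref{CT2} as a specialization of the relative, arbitrary-ring version of Theorem~\ref{ST}, with $\R=\mathrm M_2(\F)$ and $G\subset\R^\times$ the scalar matrices $\F^\times$. The dictionary from the preceding example does almost all the work. A point of $\P^1(\mathrm M_2(\F))$ is a $2$-dimensional subspace of $\F^4$, i.e.\ a line in $\P^3(\F)$. Hyperplanes in $\P^1(\R)$ are codimension-$2$ subspaces, so they too are lines in $\P^3$. A point and a hyperplane are non-neighboring exactly when the corresponding lines are skew.

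\textbf{Step 1 (setting up the realization).} Assign to each black vertex the point of $\P^1(\R)$ determined by its line, and to each white vertex the hyperplane determined by its line. The hypothesis that lines at the endpoints of every edge are skew then says precisely that this is a realization of the tiling in $\P^1(\R)$.

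\textbf{Step 2 (relative coherence means a common ruling).} Next I verify the claim in the preceding example: for lines $A_1,A_2$ (points) and $\ell_1,\ell_2$ (hyperplanes) with all four cross pairs skew, the quadruple is coherent relative to scalars iff the four lines lie in one ruling of a quadric. I will do this in coordinates. Lift the points to $4\times2$ matrices and the hyperplanes to $2\times4$ matrices (annihilators), so that every pairing $\Bell_i(\mathbf A_j)$ is an invertible $2\times2$ matrix. Using a gauge change and a choice of basis, put $A_1=\{x=0\}$-type and $A_2$-type coordinates so that $\F^4=A_1\oplus A_2$. Then the $\ell_i$ become graphs of invertible maps $A_1\to A_2$. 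With this normalization, \eqref{eq2st} modulo scalars reads $T_1^{-1}T_2\in\F^\times$ for two invertible $2\times2$ matrices $T_1,T_2$.

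The corresponding geometric fact is classical. The lines meeting all four of $A_1,A_2,\ell_1,\ell_2$ correspond to common eigenvectors of $T_1^{-1}T_2$ acting on $A_1$. There are infinitely many such transversals iff $T_1^{-1}T_2$ is scalar. Infinitely many common transversals is in turn equivalent to the four pairwise-skew-enough lines lying in one regulus. Here I need to handle the degenerate cases: $A_1=A_2$ or $\ell_1=\ell_2$, or $A_i$ meeting $A_j$. When $A_1,A_2$ meet, $A_1+A_2$ is not all of $\F^4$, and I will treat this directly via \eqref{eq2st}. I will interpret ``single ruling'' to include these degenerate configurations, as the example implicitly does.

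\textbf{Step 3 (apply the spherical theorem).} The relative version of Theorem~\ref{ST} over arbitrary rings is available. It gives: if all tiles but one are coherent relative to $G$, so is the last. Translating back via Step~2 yields the conclusion of Theorem~\ref{CT2}. The main obstacle is Step~2, specifically the careful equivalence between ``$T_1^{-1}T_2$ scalar'' and ``common ruling of a quadric,'' including the degenerate and coincident cases. I would first attempt the generic case, where the $A$'s are skew and the $\ell$'s are skew, and then treat the coincidences separately.
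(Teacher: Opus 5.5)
Your proposal is correct and follows the paper's own route: the paper obtains Theorem~\ref{CT2} by specializing the relative, arbitrary-ring version of Theorem~\ref{ST} to $\R=\mathrm M_2(\F)$ with $G$ the scalar matrices, using the dictionary lines $\leftrightarrow$ points/hyperplanes, skew $\leftrightarrow$ non-neighboring, relative coherence $\leftrightarrow$ common ruling, and your normalization $\ell_i=\mathrm{graph}(T_i)$ with $T_1^{-1}T_2\in\F^\times$ supplies the generic-case check that the paper only asserts. One caution: in the degenerate cases the equivalence cannot actually be proved, since for example $A_1=A_2$ with $\ell_1,\ell_2$ distinct and meeting is always coherent relative to $G$, yet these lines lie in no single ruling of a quadric; so there ``single ruling'' must be read as relative coherence (or such faces excluded), which is the convention you adopt and which the paper leaves implicit.
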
\end{example}
\begin{example}[M. Skopenkov, private communication]
Let $\R$ be the ring of dual numbers, and $G = \mathbb{R}$. Identify the affine patch in $\P^1(\R)$ with $\mathbb{R}^2$. Then four points in that affine patch, no two of which lie on a vertical line, are coherent relative to $G$ if and only if they lie on the same Galilean circle, i.e., a parabola of the form $y = ax^2 + bx + c$; indeed, the cross-ratio of four dual numbers with pairwise distinct real parts is real if and only if the corresponding points lie on the same Galilean circle, see \cite[Appendix B]{yaglom2012simple}.

Likewise, if we take $\R$ to be the ring of double numbers, $\R = \mathbb{R}[x]/(x^2 - 1)$, then coherence in $\P^1(\R)$ relative to $\mathbb{R}$ means that the points lie on the same Minkowskian circle, i.e., a circle with respect to the metric of indefinite signature in $\mathbb{R}^2$ (here we assume that no two of the four points lie on the same isotropic line, i.e., a line of slope $\pm 1$).

This gives Galilean and Minkowski versions of the planar case of Theorem~\ref{CT}. That theorem holds for tilings of surfaces of any genus, because the corresponding rings are commutative.
\end{example}

%
%
%
%

\bibliographystyle{plain}
\bibliography{inc.bib}

\end{document}